%


\documentclass[12pt,a4paper,twoside,reqno]{amsart}


\usepackage[all,cmtip]{xy}
\usepackage{tikz}
\usepackage{amsmath,amscd}
\usepackage{amssymb,amsfonts,mathrsfs}
\usepackage[driver=pdftex,margin=3cm,heightrounded=true,centering]{geometry}
\usepackage{JK}
\usepackage[colorlinks=true,linkcolor=blue,citecolor=blue]{hyperref}


\setcounter{tocdepth}{2}

\tolerance=2000
\emergencystretch=20pt
\usepackage[all]{xy}
\newtheorem{thm}{Theorem}[section]

\usepackage{graphicx}

\begin{document}
\title[]{A Serre-Swan theorem for bundles of bounded geometry}

\author{Jens Kaad}
\address{Institut de Math\'ematiques de Jussieu,
Universit\'e de Paris VII,
175 rue du Chevaleret,
75013 Paris,
France}
\email{jenskaad@hotmail.com}

%
%
%
\thanks{The author is supported by the Fondation Sciences Math\'ematiques de Paris.}
\subjclass[2010]{47L25, 53C20; 47L30, 53B20}
\keywords{Bounded geometry, Hilbert bundles, Operator $*$-algebras, Operator $*$-modules, Serre-Swan Theorem.}

\begin{abstract}
The Serre-Swan theorem in differential geometry establishes an equivalence between the category of smooth vector bundles over a smooth compact manifold and the category of finitely generated projective modules over the unital ring of smooth functions.

This theorem is here generalized to manifolds of bounded geometry. In this context it states that the category of Hilbert bundles of bounded geometry is equivalent to the category of operator $*$-modules over the operator $*$-algebra of continuously differentiable functions which vanish at infinity.

Operator $*$-modules are generalizations of Hilbert $C^*$-modules where $C^*$-algebras have been replaced by a more flexible class of involutive algebras of bounded operators: Operator $*$-algebras. They play an important role in the study of the unbounded Kasparov product.
\end{abstract}

\maketitle
\tableofcontents
\section{Introduction}
In differential geometry the Serre-Swan theorem for a smooth compact manifold $\C M$ states that the category of smooth vector bundles over $\C M$ is equivalent to the category of finitely generated projective modules over the unital commutative ring of smooth functions $C^\infty(\C M)$. The desirable functor sends a smooth vector bundle $E \to \C M$ to the $C^\infty(\C M)$-module of smooth sections $\Ga^\infty(E)$. See \cite[Section 3.2]{VaGr:CDS}, \cite[Chapter I, Section 6.18]{Kar:KT}, \cite[Theorem 2]{Swa:VBP}. 
%

The present paper investigates the two basic questions:
\vspace{5pt}

\emph{What happens when the compactness condition on the manifold $\C M$ is relaxed? And what if vector bundles are replaced by Hilbert bundles?}
\vspace{5pt}

From one point of view, the first question has already been answered. In a short note, A. Morye establishes general conditions on a locally ringed space $(X,\C O_X)$ which imply that the global section functor is an equivalence between the category of locally free modules of bounded rank over the sheaf $\C O_X$ and the finitely generated projective modules over the ring $\C O_X(X)$. See \cite[Theorem 2.1]{Mor:NST}. The conditions do in particular apply to any smooth manifold $\C M$ equipped with the sheaf of smooth functions. This means that the category of smooth vector bundles over $\C M$ is equivalent to the category of finitely generated projective modules over the smooth functions $C^\infty(\C M)$. The main properties needed in this example are, that $\C M$ has finite covering dimension, and that any open cover $\{U_i\}_{i \in I}$ admits a smooth partition of unity $\{\chi_i\}_{i \in I}$ with $\T{supp}(\chi_i) \su U_i$ for all $i \in I$ (thus $\{\chi_i\}$ is subordinate to $\{U_i\}$). See also the short note of G. Sardanashvily, \cite{Sar:SSN}, for a more elementary approach.
%

Suppose now that the manifold $\C M$ is Riemannian. An interesting ring to work with then consists of the continuously differentiable functions which \emph{vanish at infinity} (the de Rham differential is also assumed to vanish at infinity). A Serre-Swan theorem in this context could therefore aim for a characterization of the finitely generated projective modules over $C^1_0(\C M)$ in terms of differentiable vector bundles over $\C M$. It does however soon become apparent that the non-compactness of $\C M$ requires us to pass from finitely generated modules to a suitable class of countably generated modules. Indeed, contrary to the case of the ring $C^\infty(\C M)$ discussed above, the existence of a \emph{subordinate} partition of unity $\{\chi_i\}$ fails when the functions $\chi_i$ are required to vanish at infinity. The importance of the passage to the countably generated setup becomes even more apparent when the aim is to incorporate Hilbert bundles. These considerations raise the following question:
\vspace{5pt}

\emph{Which kind of countably generated modules over $C^1_0(\C M)$ is expected to appear in a Serre-Swan theorem? Thus, which kind of modules corresponds to differentiable Hilbert bundles on non-compact Riemannian manifolds?}
\vspace{5pt}

A source of inspiration for answering these questions is Kasparov's stabilization theorem. The content of this theorem is that any countably generated Hilbert $C^*$-module over any $C^*$-algebra $A$ is an orthogonal direct summand in a canonical "free" module over $A$. Notice that countably generated is meant in a topological sense, thus the requirement is that a dense submodule is algebraically countably generated. See \cite[Theorem 2]{Kas:HSV}, \cite[Theorem 1.4]{MiPh:ETH}, \cite[Theorem 13.6.2]{Bla:KOA}. When $A$ consists of the \emph{continuous} functions vanishing at infinity on a manifold $\C M$, the stabilization theorem implies the following: For any separable continuous field of Hilbert spaces $\sH$ on $\C M$ there exists a strongly continuous projection valued map $\sP : \C M \to \sL(H)$ and an isomorphism of modules $\Ga_0(\sH) \cong \sP C_0(\C M,H)$ over $C_0(\C M)$. Here $\sL(H)$ are the bounded operators on a separable Hilbert space $H$, and $\Ga_0(\sH)$ are the continuous sections of the field which vanish at infinity. The module $\sP C_0(\C M,H)$ consists of the continuous maps $f : \C M \to H$ which vanish at infinity with $f(x) \in \T{Im}(\sP(x))$ for all $x \in \C M$. See \cite[Chapter 2, Appendix A]{Con:NCG} and \cite{DiDo:CCH}.
%

In view of this result one could consider modules $X$ over $C^1_0(\C M)$ for which there exist a strongly differentiable projection valued map $\sP : \C M \to \sL(H)$ and an isomorphism $X \cong \sP C^1_0(\C M,H)$ of modules over $C^1_0(\C M)$. Here $C^1_0(\C M,H)$ are the $C^1$-functions which vanish at infinity and take values in a separable Hilbert space $H$. We will furthermore require an upper bound on the strong de Rham derivative of $\sP$.

The importance of this kind of modules is underpinned by the abstract framework of operator $*$-modules which was invented in \cite{KaLe:SFU} as an important tool for the construction of the unbounded version of the Kasparov product in $KK$-theory. In a geometric framework an operator $*$-module serves as a domain for a canonical Gra\ss mann connection. See also \cite{Mes:UCN}. An operator $*$-module can be thought of as an analogue of a Hilbert $C^*$-module, but where the $C^*$-algebra has been replaced by a more flexible involutive algebra of bounded operators called an operator $*$-algebra. 

A (concrete) operator $*$-algebra can be shortly defined as a closed subalgebra $A \su \sL(H)$ which comes equipped with a completely bounded involution $\da : A \to A$. The involution $\da$ is typically different from the adjoint operation. Since $A$ is an operator algebra it has an associated (column) standard module $A^\infty$ and the existence of the completely bounded involution implies that this standard module has a canonical completely bounded $A$-valued hermitian form. When $A$ is a $C^*$-algebra this construction recovers both the standard module $H_A$ and the usual $A$-valued hermitian form. See \cite{Ble:GHM}.

In view of Kasparov's stabilization theorem, it is now natural to take a careful look at the orthogonal direct summands in the standard module $A^\infty$ over an operator $*$-algebra $A$. These are by definition the operator $*$-modules over $A$. The morphisms are the completely bounded maps which have a completely bounded adjoint with respect to the canonical hermitian forms.

The concept of an operator $*$-module is thus strongly related to D. Blecher's notion of a (CCGP)-module over an operator algebra $A$ (and thus to his notion of a rigged module). See \cite[Definition 8.1]{Ble:GHM}. The incorporation of completely bounded involutions and the associated completely bounded hermitian forms seems however to be novel. Another difference is that our work is firmly rooted in the "completely bounded" setup and not the "completely contractive" setup of \cite{Ble:GHM}.

The present paper is concerned with the $*$-algebra of continuously differentiable functions on a Riemannian manifold which vanish at infinity. This $*$-algebra becomes a (concrete) operator $*$-algebra via the injective algebra homomorphism
\[
C^1_0(\C M) \to \sL\big(L^2(\La(T^*\C M) \op L^2(\La(T^*\C M)) \big) \q 
f \mapsto \ma{cc}{f & 0 \\ df & f},
\]
where $L^2(\La(T^*\C M))$ denotes the Hilbert space of square integrable forms and $df \in \Ga_0(T^*\C M)$ is the de Rham derivative. See \cite[Proposition 2.8]{KaLe:SFU}. A similar observation implies that any spectral triple has an associated canonical operator $*$-algebra, \cite[Proposition 2.6]{KaLe:SFU}, \cite[Section 3.1]{Mes:UCN}.
%
%

In the case of $C^1_0(\C M)$, the standard module $C^1_0(\C M)^\infty$ is isomorphic to the Hilbert space valued continuously differentiable functions which vanish at infinity, $C^1_0(\C M,H)$. The canonical completely bounded hermitian form is simply given by $\inn{s,t} : x \mapsto \inn{s(x),t(x)}_H$. See \cite[Proposition 3.6]{KaLe:SFU}.

The first important result of this paper describes the endomorphism ring of the standard module $C^1_0(\C M)^\infty$ when $\C M$ is complete.

\begin{prop}
Suppose that $\C M$ is a complete Riemannian manifold. Then there is a canonical bijective correspondence between the endomorphism ring $\T{End}(C^1_0(\C M)^\infty)$ and the $*$-strongly differentiable maps $\al : \C M \to \sL(H)$ with 
\[
\sup_{x \in \C M}\big( \|(d\al)(x)\|_\infty + \|\al(x)\|_\infty \big) < \infty,
\]
where $(d\al)(x) : H \to H \ot T_x^*(\C M)$ denotes the strong de Rham derivative at a point $x \in \C M$.
\end{prop}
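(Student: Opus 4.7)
The plan is to realize the correspondence as pointwise multiplication. Given $\al : \C M \to \sL(H)$ with the stated boundedness and $*$-strong differentiability properties, I would define $T_\al$ on $C^1_0(\C M)^\infty \cong C^1_0(\C M,H)$ by $(T_\al s)(x) := \al(x) s(x)$. The Leibniz identity $d(T_\al s) = (d\al) s + \al(ds)$, together with the assumed bounds on $\al$ and $d\al$ and the vanishing at infinity of $s$ and $ds$, shows that $T_\al s$ again lies in $C^1_0(\C M,H)$. The $*$-strong differentiability of $\al$ produces an analogously defined adjoint endomorphism $T_{\al^*}$ for the canonical hermitian form $\inn{s,t}(x) = \inn{s(x),t(x)}_H$, so $T_\al$ is completely bounded with completely bounded adjoint, i.e.\ an endomorphism of $C^1_0(\C M)^\infty$.

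For the converse, given an endomorphism $T$, I would set $\al(x) h := T(\chi \ot h)(x)$ where $\chi \in C^1_0(\C M)$ is any bump function equal to $1$ on a neighborhood of $x$. Independence of $\chi$ follows from $C^1_0(\C M)$-linearity of $T$: if $\chi_1, \chi_2$ agree near $x$, a further bump $\phi$ with $\phi(x) = 1$ supported in that neighborhood gives $T((\chi_1-\chi_2) \ot h)(x) = \phi(x) T((\chi_1-\chi_2) \ot h)(x) = T(\phi(\chi_1-\chi_2) \ot h)(x) = 0$. Strong continuity and differentiability of $\al$ are then visible by fixing a single bump $\chi$ that equals $1$ on a prescribed open set $U$: for $x \in U$ one has $\al(x) h = T(\chi \ot h)(x)$, the right-hand side being $C^1$ in $x$ with derivative $d(T(\chi \ot h))(x)$. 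The $*$-strong version is obtained by running the same construction for the hermitian adjoint of $T$.

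Completeness of $\C M$ enters decisively in the uniform bound. For each $x$ and $\varepsilon > 0$ I would exhibit a bump $\chi \in C^1_0(\C M)$ with $\chi(x) = 1$ and $\|d\chi\|_\infty < \varepsilon$ by smoothing a slowly decaying function of the geodesic distance from $x$; completeness is needed to guarantee that such bumps actually vanish at infinity. The estimate
\[
\|\al(x) h\| + \|(d\al)(x) h\| \le \|T(\chi \ot h)\|_\infty + \|d(T(\chi \ot h))\|_\infty \le \|T\|(1 + \|d\chi\|_\infty)\|h\|
\]
then yields $\sup_x(\|\al(x)\|_\infty + \|(d\al)(x)\|_\infty) \le \|T\|$ upon letting $\varepsilon \to 0$.

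Finally one must check that the two assignments are mutual inverses. The composition $\al \mapsto T_\al \mapsto \al$ is read off directly from the definition of $\al$ applied to elementary tensors. The other composition $T \mapsto \al \mapsto T_\al$ reduces, by $C^1_0(\C M)$-linearity and continuity, to checking the pointwise identity $T(s)(x) = \al(x) s(x)$, which follows from a localization argument using the bumps above. I expect this last identification --- promoting $C^1_0(\C M)$-linearity to a pointwise multiplication formula in the $C^1$ setting, rather than the much more forgiving $C^\infty$ setting --- together with the preparatory construction of bumps with arbitrarily small derivative on a complete manifold, to be the principal technical obstacles.
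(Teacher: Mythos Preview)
Your proposal is correct and follows essentially the same route as the paper. Both directions match: the forward map is pointwise multiplication $s\mapsto \al(\cdot)s(\cdot)$ with the Leibniz rule controlling the derivative, and the inverse is defined by evaluating the endomorphism on bump functions (the paper uses a single sequence $\{\si_i\}$ of compactly supported smooth functions with $\|d\si_i\|_\infty\le 1/i$ that are eventually $1$ on any compact set, which is exactly your ``bumps with arbitrarily small derivative'' obtained from completeness).

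One organizational point worth noting: for \emph{complete} boundedness of $T_\al$ the paper packages the Leibniz identity as the matrix equation
\[
\begin{pmatrix}(T_\al s)(x)\\ d(T_\al s)(x)\end{pmatrix}
=\begin{pmatrix}\al(x)&0\\ (d\al)(x)&\al(x)\ot 1\end{pmatrix}
\begin{pmatrix}s(x)\\ (ds)(x)\end{pmatrix},
\]
valid for finite matrices $s\in M(C^1_0(\C M,H))$, and then invokes the fact that for a bounded operator between Hilbert spaces the operator norm equals the completely bounded norm. Your Leibniz argument gives ordinary boundedness; you should indicate that the same estimate persists at the matrix level for this reason. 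Similarly, your final bound $\|\al(x)h\|+\|(d\al)(x)h\|\le \|T\|(1+\|d\chi\|_\infty)\|h\|$ is morally right but the constants depend on whether one uses the $\ell^1$ or $\ell^2$ combination of $\|\cdot\|$ and $\|d(\cdot)\|$; the paper's version of this step yields $\|\al\|_1\le 2\|T\|_{\T{cb}}$. These are cosmetic adjustments, not gaps.
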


A consequence of this proposition is that operator $*$-modules over $C^1_0(\C M)$ correspond precisely to strongly differentiable projection valued maps $P : \C M \to \sL(H)$ with $\sup_{x \in \C M}\|(dP)(x)\|_\infty < \infty$. The "natural" class of modules alluded to above, after the discussion of continuous fields of Hilbert spaces, thus appears in a \emph{canonical way} when the operator $*$-algebra structure on $C^1_0(\C M)$ is fixed.

Our initial questions on the Serre-Swan theorem can now be sensibly rephrased as follows:
\vspace{5pt}

\emph{Can we characterize the operator $*$-modules over $C^1_0(\C M)$ in terms of differentiable Hilbert bundles over the Riemannian manifold $\C M$?}
\vspace{5pt}

See also \cite[Remark 3.7]{KaLe:SFU}.

When dealing with this question on the Serre-Swan theorem it soon becomes apparent that some further restrictions on the Riemannian manifold $\C M$ are needed. It turns out that a suitable condition is a weak form of bounded geometry referred to by J. Cheeger, M. Gromov, and M. Taylor as $C^0$-bounded geometry. This condition means that the injectivity radius $r_{\T{inj}}$ of the Riemannian manifold is strictly positive and that there exists a constant $r \in (0,r_{\T{inj}})$ such that the derivatives of the exponential maps $\exp_x : B_r(0) \to U_x \su \C M$ and their inverses are uniformly bounded. Here $B_r(0) \su (T_x \C M)_{\rr}$ denotes the open ball of radius $r$ in the tangent space over a point $x \in \C M$. See \cite[Section 3]{CGT:FKL}.

Similarly, the Hilbert bundles on the geometric side of the Serre-Swan theorem are required to be of $C^0$-bounded geometry. This means that the Hilbert bundle $\sH \to \C M$ can be trivialized over each normal coordinate neighborhood $\exp_x(B_r(0)) = U_x \su \C M$ such that the transition maps $\tau_{x,y} : U_x \cap U_y \to \sL(H)$ are strongly differentiable and take values in the group of unitaries. Furthermore, the strong de Rham derivatives $(d\tau_{x,y})(z) : H \to H \ot T_z(\C M)$ are required to be uniformly bounded in all parameters. A morphism of Hilbert bundles of $C^0$-bounded geometry is a morphism of the underlying Hilbert bundles such that the transition maps $\al_{x,y} : U_x\cap U_y \to \sL(H,G)$ are $*$-strongly differentiable with uniformly bounded strong derivatives.

When $\sH \to \C M$ is a vector bundle this notion of bounded geometry is a $C^0$-version of the one appearing in the work of M. Shubin for example, see \cite[Section A1.1]{Shu:SEN}.
%

The main result of the present text can now be formulated precisely:

\begin{thm}
Let $\C M$ be a manifold of bounded geometry. The category of Hilbert bundles of bounded geometry over $\C M$ is equivalent to the category of operator $*$-modules over $C^1_0(\C M)$. 
\end{thm}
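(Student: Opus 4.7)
My plan is to construct functors in both directions between the category of Hilbert bundles of bounded geometry and the category of operator $*$-modules over $C^1_0(\C M)$, and then verify that they form a natural equivalence. The \emph{section functor} sends a Hilbert bundle $\sH \to \C M$ to its module $\Ga^1_0(\sH)$ of continuously differentiable sections vanishing at infinity, acted on pointwise by $C^1_0(\C M)$. The \emph{bundle functor} sends an operator $*$-module $X$, identified via the preceding Proposition with $\sP C^1_0(\C M,H)$ for some strongly differentiable projection valued map $\sP : \C M \to \sL(H)$ satisfying $\sup_x \|(d\sP)(x)\|_\infty < \infty$, to the Hilbert bundle $\sH_\sP$ whose fiber at $x$ is $\sP(x)H$.

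To show that $\Ga^1_0(\sH)$ is indeed an operator $*$-module, I would use the bounded geometry of $\C M$ to produce a uniformly locally finite cover by normal coordinate neighborhoods $U_i = \exp_{x_i}(B_r(0))$ together with a subordinate $C^1$-partition of unity $\{\chi_i\}$ with uniformly bounded derivatives. Combined with the unitary trivializations of $\sH$ on each $U_i$, which by definition of bounded geometry have uniformly bounded strong derivatives, this exhibits $\Ga^1_0(\sH)$ as an orthogonal direct summand of the standard module $C^1_0(\C M)^\infty$. To equip $\sH_\sP$ with trivializations of bounded geometry type, I would use the Kato-Nagy formula: near a point $x_0 \in \C M$, the operator $V(x) = \sP(x)\sP(x_0) + (1-\sP(x))(1-\sP(x_0))$ is invertible whenever $\|\sP(x) - \sP(x_0)\| < 1$, and its unitary part intertwines $\sP(x_0)$ with $\sP(x)$. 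The global bound on $d\sP$ controls the size of such neighborhoods uniformly in $x_0$ and, through functional calculus, yields uniform bounds on the strong derivatives of the resulting unitary transition maps, so that $\sH_\sP$ is of bounded geometry.

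For the natural equivalence, on one side the bundle $\sH_\sP$ has $C^1_0$-sections tautologically equal to $\sP C^1_0(\C M,H)$, giving a natural isomorphism of operator $*$-modules. On the other side, starting from $\sH$, the summand construction of the first step yields a projection $\sP$ on $C^1_0(\C M)^\infty$ whose fiber $\sP(x)H$ is canonically isomorphic to $\sH_x$ via fiberwise evaluation, producing the natural isomorphism of Hilbert bundles. Functoriality and compatibility with morphisms follow from the bijective correspondence for endomorphisms in the Proposition, extended in a straightforward manner to morphisms between modules $\sP_1 C^1_0(\C M,H_1) \to \sP_2 C^1_0(\C M,H_2)$ associated to different projections.

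The main obstacle I anticipate is the \emph{uniformity} in the base point of the Kato-Nagy trivializations: translating the single global estimate $\|d\sP\|_\infty \le C$ into uniform $C^1$-bounds on the transition unitaries requires a careful operator-valued functional calculus, in particular controlling the strong derivatives of expressions like the inverse square root appearing in the polar decomposition of $V(x)$, uniformly in $x_0$. A subsidiary technical point is to verify that the partition-of-unity gluing for the section functor lands in the operator $*$-module category with respect to the operator space norms on $C^1_0(\C M)^\infty$, not merely the sup norm.
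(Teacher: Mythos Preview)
Your proposal is correct and matches the paper's strategy closely: the section functor is exactly the paper's $\Ga_0^1$, your partition-of-unity embedding of $\Ga_0^1(\sH)$ into $C^1_0(\C M)^\infty$ is precisely how the paper proves $\Ga_0^1(\sH)$ is an operator $*$-module, and your image-bundle construction $\sH_\sP$ is the paper's $\T{Im}(P)$. The technical obstacles you flag---uniform $C^1$-control on the inverse square root in the unitary intertwiner, and checking the embedding respects the operator space norms---are exactly the ones the paper isolates and handles in dedicated lemmas.

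Two minor differences are worth noting. First, the paper uses the intertwiner $W_x(y) = \sP(y)\bigl(\sP(x)\sP(y)\sP(x)\bigr)^{-1/2}$ acting between the ranges, rather than the full Kato--Nagy unitary $V(x)$ on all of $H$; both work, and both reduce to the same square-root estimate. Second, and more structurally, the paper does \emph{not} build an explicit quasi-inverse functor: instead it shows $\Ga_0^1$ is fully faithful and essentially surjective and invokes the standard criterion for equivalence. This is slightly cleaner than your plan, because an explicit bundle functor $X \mapsto \sH_\sP$ requires you to check that different projections $\sP$ presenting the same $X$ yield isomorphic bundles (and that the assignment is functorial on morphisms), whereas essential surjectivity only asks for \emph{some} bundle whose sections recover $X$. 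Your route works, but carries this extra well-definedness burden.
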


The equivalence is given by the functor $\Ga_0^1$ which sends a Hilbert bundle of bounded geometry to the module of continuously differentiable sections which vanish at infinity and a morphism $\al : \sH \to \sG$ to the homomorphism $\Ga_0^1(\al) : s \mapsto \al \ci s$. 
\vspace{1pt}

The current text is organized as follows:

In Section \ref{s:manfol}, we recall the notion of $C^0$-bounded geometry and introduce the category of Hilbert bundles of bounded geometry. Some references for this section are \cite{Eic:GOM}, \cite{CGT:FKL}, \cite{Shu:SEN}, \cite{Roe:ITO}.

In Section \ref{s:opemodI}, we recall the definition of an operator $*$-module over an operator $*$-algebra. It is in this respect necessary to review the basic definitions from the theory of operator spaces, operator algebras, and operator modules. The section also contains a study of the operator $*$-algebra of $C^1$-functions on a Riemannian manifold which vanish at infinity. This involves the computation of the standard module and the associated endomorphism ring. Some references for this section are \cite{Pis:IOT}, \cite{BlMe:OMO}, \cite{Pau:CBO}, \cite{BlMuPa:CMP}, \cite{Ble:GHM}, \cite{Ble:AHM}.

The last three sections contain the main contribution of the present paper. 

In Section \ref{s:difstab} it is established that the differentiable sections which vanish at infinity of a Hilbert bundle of bounded geometry is an operator $*$-module over $C^1_0(\C M)$.

In Section \ref{s:imabun} it is proved that an operator $*$-module over $C^1_0(\C M)$ has an associated image Hilbert bundle of bounded geometry. 
%

In the final Section \ref{s:serswa} the above results are combined to a proof of the Serre-Swan theorem. In particular it is established that any operator $*$-module is isomorphic to the $C^1$-sections which vanish at infinity of a Hilbert bundle of bounded geometry.

\section*{Acknowledgements} I would like to thank Matthias Lesch for the good discussions we had on the subject of the present paper at many occasions.


%
%

\section{Bounded geometry}\label{s:bougeo}

\subsection{Manifolds}\label{s:manfol}
Throughout this section $\C M$ will denote a smooth manifold of dimension $N \in \nn$. Thus, $\C M$ is a connected, second countable, Hausdorff topological space with a maximal atlas of smooth charts $\sF$.

Let $T\C M \to \C M$ and $T^*\C M \to \C M$ denote the complexified tangent bundle and the complexified cotangent bundle over the smooth manifold $\C M$. The smooth sections $\Ga^\infty(T\C M)$ and $\Ga^\infty(T^*\C M)$ are referred to as smooth vector fields and smooth $1$-forms. They are modules over the unital ring $C^\infty(\C M)$ of smooth complex valued functions on $\C M$.

For any smooth map $f : \C M \to \C N$, let $df(x) : T_x\C M \to T_{f(x)} \C N$ denote the derivative at a point $x \in \C M$.
%

The notation $d : C^\infty(\C M) \to \Ga^\infty(T^*\C M)$ refers to the de Rham differential. Thus explicitly in a smooth chart $(\phi,U) \in \sF$ we have that
\[
(df)(x) = \sum_{i=1}^N \frac{\pa f}{\pa \phi_i}\big|_x \cd (d\phi_i)(x) \in T_x^*\C M
\]
for all $x \in U$ and all smooth functions $f : \C M \to \cc$, where $(d\phi_i)(x) \in T_x^*\C M$ are the dual basis vectors to $\frac{\pa}{\pa \phi_i}\big|_x \in T_x \C M$, $i \in \{1,\ldots,N\}$.
%


\begin{dfn}
A smooth manifold $\C M$ is \emph{Riemannian} when it is equipped with an inner product $\inn{\cd,\cd}_x : T_x\C M \ti T_x\C M \to \cc$ for each $x \in \C M$. Furthermore, the map $\binn{\frac{\pa}{\pa \phi_i},\frac{\pa}{\pa \phi_j}} : U \to \cc$ is required to be real valued and smooth whenever $\phi_i,\phi_j : U \to \rr$ are coordinate functions of a smooth chart $(\phi,U) \in \sF$.
\end{dfn}


Let $\C M$ be a smooth Riemannian manifold. The hermitian form $\inn{\cd,\cd} : \Ga^\infty(T\C M) \ti \Ga^\infty(T\C M) \to C^\infty(\C M)$ establishes an isomorphism $\flat : \Ga^\infty(T\C M) \to \Ga^\infty(T^*\C M)$ given by $X^\flat : Y \mapsto \inn{X,Y}$. In particular, there is an associated hermitian form $\inn{\cd,\cd} : \Ga^\infty(T^*\C M) \ti \Ga^\infty(T^*\C M) \to C^\infty(\C M)$, $\inn{X^\flat,Y^\flat} := \inn{Y,X}$. 
%
%

Let $(T\C M)_\rr \to \C M$ denote the tangent bundle over $\C M$. Let $x \in \C M$. Recall then that there exists an $r > 0$ such that the exponential map $\exp_x : B_r(0) \to U_{x,r}$ is a diffeomorphism, where $B_r(0) \su (T_x \C M)_\rr$ is the open ball of radius $r >0$ and center $0$ and $U_{x,r} \su \C M$ is an open subset. See \cite[Chapter III, Proposition 8.1]{KoNo:FDGI}.

Each choice of orthonormal basis for $(T_x \C M)_\rr$ therefore gives rise to a smooth chart $\phi_{x,r} : U_{x,r} \to \rr^N$. Such a smooth chart will be called \emph{normal}. Notice that the image $\phi_{x,r}(U_{x,r})$ is the open ball $B_r(0)$ in $\rr^N$ with radius $r$ and center $0$.

\begin{dfn}\label{d:bougeo}
A smooth Riemannian manifold is of \emph{bounded geometry} when
\begin{enumerate}
\item There exists an $r > 0$ such that $\exp_x : B_r(0) \to U_{x,r}$ is a diffeomorphism for all $x \in \C M$.
\item There exists an $r > 0$ satisfying the above condition and a constant $C > 0$ such that
\[
\begin{split}
\T{sup}_{y \in U_{x,r}}\|d\phi_{x,r}(y)\|_\infty \leq C \q \T{and} \q
\T{sup}_{v \in B_r(0)}\|d\phi^{-1}_{x,r}(v)\|_\infty \leq C
\end{split}
\]
for all $x \in \C M$. Here $(d\phi_{x,r})(y) : T_y\C M \to T_{\phi_{x,r}(y)}\rr^N$ and its inverse are perceived as bounded operators. The norms appearing are thus operator norms.
%
\end{enumerate}
%
\end{dfn}

\begin{remark}\label{r:bougeoI}
The first condition in the above definition means precisely that the \emph{injectivity radius} $r_{\T{inj}}$ of $\C M$ is strictly positive. 

%

For each smooth chart $\phi : U \to \rr^N$, let $g_{\phi} : U \to \T{GL}_N(\rr)$ be defined by $(g_{\phi})_{ij} := \binn{\frac{\pa}{\pa \phi_i}, 
\frac{\pa}{\pa \phi_j}}$. The second condition is then equivalent to the existence of constants $C > 0$ and $r \in (0,r_{\T{inf}})$ such that
\[
\T{sup}_{y \in U_{x,r}}\|g_{\phi_{x,r}}(y)\|_\infty \leq C \q \T{and} \q
\T{sup}_{y \in U_{x,r}}\|g_{\phi_{x,r}}^{-1}(y)\|_\infty \leq C
\]
for all $x \in \C M$. The norms appearing are again operator norms since each $g_{\phi_{x,r}}(y)$ can be perceived as a bounded operator on $\cc^N$.

The notion of bounded geometry used here is thus equivalent to $C^0$-bounded geometry as defined in \cite[Section 3]{CGT:FKL}.

%
\end{remark}


The next two general lemmas on manifolds of bounded geometry will suffice for the purposes of this paper. They are variations of results appearing in \cite[Appendix A1.1]{Shu:SEN}.

The open balls are with respect to the metric $d_{\C M} : \C M \ti \C M \to [0,\infty)$ associated with the Riemannian structure. Note also that a countable open cover $\{U_i\}_{i \in \nn}$ of $\C M$ is of \emph{finite multiplicity} when there exists a $K \in \nn$ such that $\bigcap_{j \in J} U_j \neq \emptyset \Rightarrow |J| \leq K$. Here $|J|$ is the number of elements in the subset $J \su I$.

\begin{lemma}\label{l:finmul}
Let $\C M$ be a manifold of bounded geometry. There exists an $\ep_0 > 0$ such that for each $\ep \in (0,\ep_0)$ there exists a countable cover $\{B_{\ep}(y_i)\}_{i \in \nn}$ of $\C M$ such that the countable cover $\{B_{2\ep}(y_i)\}_{i \in \nn}$ has finite multiplicity. Here $y_i \in \C M$ is the center and $\ep > 0$ is the radius of the open ball $B_\ep(y_i) \su \C M$.
\end{lemma}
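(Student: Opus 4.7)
The plan is to combine a maximal packing argument (to produce the cover) with a volume-comparison argument, where the uniform bounds on the derivatives of the normal charts from Definition \ref{d:bougeo} translate into uniform upper and lower bounds on the Riemannian volume of metric balls. Concretely, I would fix $\epsilon_0 \in (0, 2r/5)$, where $r > 0$ is the radius from Definition \ref{d:bougeo}, so that all the balls appearing in the argument sit inside a single normal coordinate chart on which the Euclidean/Riemannian metrics are uniformly comparable.

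For the existence of the cover, fix $\epsilon \in (0,\epsilon_0)$ and apply Zorn's lemma to obtain a maximal subset $\{y_i\}_{i \in I} \subseteq \mathcal{M}$ such that the balls $B_{\epsilon/2}(y_i)$ are pairwise disjoint. Maximality forces $\bigcup_i B_{\epsilon}(y_i) = \mathcal{M}$, since otherwise we could enlarge the family. Because $\mathcal{M}$ is second countable (hence separable), the family of pairwise disjoint nonempty open sets $\{B_{\epsilon/2}(y_i)\}_{i \in I}$ must be at most countable, so after reindexing we obtain the desired countable cover $\{B_\epsilon(y_i)\}_{i \in \mathbb{N}}$.

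For the finite multiplicity of $\{B_{2\epsilon}(y_i)\}_{i \in \mathbb{N}}$, suppose $J \subseteq \mathbb{N}$ is such that $\bigcap_{j \in J} B_{2\epsilon}(y_j)$ contains a point $z$. The triangle inequality gives $y_j \in B_{2\epsilon}(z)$ and $B_{\epsilon/2}(y_j) \subseteq B_{5\epsilon/2}(z)$ for every $j \in J$, and since $5\epsilon/2 < r \leq r_{\T{inj}}$ the ball $B_{5\epsilon/2}(z)$ is contained in the normal coordinate neighborhood $U_{z,r}$. Pulling the Riemannian volume form back through $\phi_{z,r}^{-1}$, the bounds $\|d\phi_{z,r}\|_\infty, \|d\phi_{z,r}^{-1}\|_\infty \leq C$ (equivalently, the uniform two-sided bound on the metric tensors $g_{\phi_{z,r}}$ from Remark \ref{r:bougeoI}) give constants $c_1, c_2 > 0$, independent of $z$ and of $\epsilon \in (0,\epsilon_0)$, such that
\[
\T{vol}\big(B_{\epsilon/2}(y_j)\big) \geq c_1 \epsilon^N \q \T{and} \q \T{vol}\big(B_{5\epsilon/2}(z)\big) \leq c_2 \epsilon^N.
\]
Combining these bounds with the disjointness of the small balls yields
\[
|J| \cdot c_1 \epsilon^N \leq \sum_{j \in J} \T{vol}\big(B_{\epsilon/2}(y_j)\big) \leq \T{vol}\big(B_{5\epsilon/2}(z)\big) \leq c_2 \epsilon^N,
\]
so $|J| \leq c_2/c_1 =: K$, a uniform constant depending only on $C$ and $N$.

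The only real technical obstacle is verifying the uniform volume estimates; everything else is a soft packing argument. The uniform bounds on $d\phi_{x,r}$ and its inverse must be pushed through carefully to obtain the two-sided comparison between the Riemannian volume of a ball $B_\rho(z)$ with $\rho < r$ and the Euclidean volume $\T{vol}_{\rr^N}(B_\rho(0))$, with constants depending only on $C$ and $N$. This is where the bounded geometry hypothesis is essential: without it, the comparison constants could degenerate as $z$ varies over the noncompact manifold $\mathcal{M}$.
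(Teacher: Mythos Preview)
The paper does not prove this lemma; it states the result and cites \cite[Appendix A1.1]{Shu:SEN}. Your packing-plus-volume-comparison argument is exactly the standard proof (and the one Shubin gives), and it is correct as written; the one point worth making explicit is that the metric ball $B_{5\epsilon/2}(z)$ genuinely coincides with $U_{z,5\epsilon/2}=\exp_z(B_{5\epsilon/2}(0))$ because $5\epsilon/2<r<r_{\T{inj}}$, so the comparison of Riemannian and Euclidean volumes really does take place inside a single normal chart.
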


\begin{lemma}\label{l:paruni}
Let $\{B_{\ep}(y_i)\}_{i \in \nn}$ be an open cover as in the above lemma. Then there exists a partition of unity $\{\chi_i\}$ such that each $\chi_i : \C M \to [0,1]$ has a smooth square root with $\T{supp}(\sqrt{\chi_i}) \su B_{2\ep}(y_i)$. Furthermore, $\T{sup}_{i \in \nn}\|d\sqrt{\chi_i}\|_\infty < \infty$.
\end{lemma}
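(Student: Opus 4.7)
The plan is to build the partition of unity from a single Euclidean model bump, pull it back through the normal coordinate charts provided by Definition \ref{d:bougeo}, and renormalize in a quadratic fashion so that the smooth square root exists by construction. First, I would fix a radially symmetric smooth function $\psi : \rr^N \to [0,1]$ which equals $1$ on $B_\ep(0)$ and is compactly supported inside $B_{2\ep}(0)$; for instance, $\psi(v) = \rho(|v|)$ for a standard one-variable smooth cut-off $\rho$. Using the normal chart $\phi_{y_i, 2\ep} : B_{2\ep}(y_i) \to B_{2\ep}(0) \su \rr^N$, set
\[
\varphi_i := \psi \ci \phi_{y_i, 2\ep}
\]
and extend by zero to a smooth function on $\C M$ with $\T{supp}(\varphi_i) \su B_{2\ep}(y_i)$. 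Radial symmetry of $\psi$ guarantees that $\varphi_i$ does not depend on the orthonormal basis used to define the normal chart.

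Next, I would set $\Sigma := \sum_{i \in \nn} \varphi_i^2$. The finite multiplicity of the cover $\{B_{2\ep}(y_i)\}$ supplied by Lemma \ref{l:finmul} makes this sum locally finite, hence $\Sigma$ is smooth. Since the smaller balls $\{B_\ep(y_i)\}$ already cover $\C M$ and $\varphi_i \equiv 1$ on $B_\ep(y_i)$, one has $\Sigma(x) \geq 1$ for every $x \in \C M$, so $\sqrt\Sigma$ is smooth and bounded below by $1$. Defining
\[
\chi_i := \frac{\varphi_i^2}{\Sigma}, \qquad \sqrt{\chi_i} := \frac{\varphi_i}{\sqrt\Sigma},
\]
the smoothness of $\sqrt{\chi_i}$ is immediate, and the conditions $\chi_i \geq 0$, $\sum_i \chi_i = 1$, and $\T{supp}(\sqrt{\chi_i}) \su B_{2\ep}(y_i)$ follow by construction.

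For the uniform derivative bound, the chain rule gives
\[
(d\varphi_i)(y) = (d\psi)\big(\phi_{y_i, 2\ep}(y)\big) \ci (d\phi_{y_i, 2\ep})(y),
\]
so the bounded geometry estimate $\|d\phi_{x, r}(y)\|_\infty \leq C$ yields $D := \T{sup}_{i \in \nn} \|d\varphi_i\|_\infty \leq C \|d\psi\|_\infty < \infty$. Differentiating $\sqrt{\chi_i}$ produces
\[
d\sqrt{\chi_i} = \frac{d\varphi_i}{\sqrt\Sigma} - \frac{\varphi_i}{2 \Sigma^{3/2}} \cd d\Sigma, \qquad d\Sigma = 2 \sum_j \varphi_j \, d\varphi_j.
\]
A pointwise application of Cauchy--Schwarz combined with the finite-multiplicity bound $K$ gives $|d\Sigma| \leq 2\sqrt\Sigma \cd \sqrt K \cd D$, and coupled with $\Sigma \geq 1$ and $\varphi_i \leq \sqrt\Sigma$ this leads to the uniform estimate
\[
\|d\sqrt{\chi_i}\|_\infty \leq (1 + \sqrt K)\, D < \infty.
\]

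The main obstacle is making the three ingredients cooperate: the bounded-geometry bound on $\|d\phi_{x,r}\|_\infty$, the finite-multiplicity bound that controls how many $\varphi_j$ can be simultaneously non-vanishing (and thus tames $d\Sigma$), and the pointwise lower bound $\Sigma \geq 1$. The last point is the subtle one, since without the coverage property coming from the smaller balls $B_\ep(y_i)$ the denominator $\sqrt\Sigma$ could become arbitrarily small, which would break the uniform derivative estimate.
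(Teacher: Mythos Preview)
Your argument is correct and is exactly the standard construction one finds in the reference the paper cites (Shubin, Appendix A1.1); the paper itself omits the proof entirely and simply points to that reference. The only implicit hypothesis you are using is that $2\ep$ is smaller than the constant $r$ from Definition~\ref{d:bougeo}, so that the normal chart $\phi_{y_i,2\ep}$ exists and inherits the uniform bound on $\|d\phi_{y_i,2\ep}\|_\infty$ by restriction; this is precisely what the choice of $\ep_0$ in Lemma~\ref{l:finmul} is meant to guarantee, so there is no gap.
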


Notice that the norm $\|d\sqrt{\chi_i}\|_\infty$ in the above lemma is the supremum norm, $\|d\sqrt{\chi_i}\|_\infty := \sup_{x \in \C M}\|(d\sqrt{\chi_i})(x)\|_\infty$.

\begin{remark}\label{r:comman}
Each of the functions $\chi_i : \C M \to [0,1]$ from the above lemma has compact support. Indeed, the bounded geometry condition on $\C M$ entails that $\C M$ is a complete manifold, see \cite[Proposition 1.2a]{Eic:GOM}. But this implies that the closed ball $\T{cl}\big( B_{\ep}(y_i) \big) \su \C M$ is compact. See \cite[Chapter IV, Theorem 4.1]{KoNo:FDGI}
\end{remark}

%

The $*$-algebra of continuously differentiable function which vanish at infinity on a smooth Riemannian manifold will play an important role in this paper. To avoid any confusion we give a precise definition.

\begin{dfn}
Let $\C M$ be a smooth Riemannian manifold. A function $f : \C M \to \cc$ is \emph{continuously differentiable} (or $C^1$) when the derivatives $\frac{\pa f}{\pa \phi_i} : U \to \cc$, $i \in \{1,\ldots,n\}$, exist and are continuous for any smooth chart $(\phi,U) \in \sF$. A $C^1$-function \emph{vanishes at infinity} when there for each $\ep > 0$ exists a compact set $\C K \su \C M$ such that
\[
\T{sup}_{x \in \C M\setminus \C K} \big( |f(x)| + \inn{df,df}^{1/2}(x) \big) < \ep.
\]

The $C^1$-functions which vanish at infinity form a $*$-algebra which is denoted by $C^1_0(\C M)$. The involution is given by complex conjugation.

A $C^1$-function which vanish at infinity will often be referred to as a \emph{$C^1_0$-function}.
\end{dfn}


\subsection{Hilbert bundles}\label{s:HilBun}
Throughout this section $H$ and $G$ will be \emph{separable} Hilbert spaces and $\C M$ will be a smooth Riemannian manifold of dimension $N \in \nn$. The notation $\sL(H,G)$ refers to the bounded linear operators from $H$ to $G$. The operator norm will be denoted by $\|\cd\|_\infty : \sL(H,G) \to [0,\infty)$ and the adjoint operation by $* : \sL(H,G) \to \sL(G,H)$. The norms on the Hilbert spaces are denoted by $\|\cd\|_H : H \to [0,\infty)$, etc.

\begin{dfn}\label{d:HilBunI}
A map $f : \C M \to H$ is \emph{continuously differentiable} (or $C^1$) when the derivatives $\frac{\pa f}{\pa \phi_i} : U \to H$, $i \in \{1,\ldots,n\}$, exist and are continuous for any smooth chart $(\phi,U) \in \sF$. A $C^1$-map \emph{vanishes at infinity} when there for each $\ep > 0$ exists a compact set $\C K \su \C M$ such that
\[
\T{sup}_{x \in \C M\setminus \C K} \big( \|f(x)\|_H + \|(df)(x)\|_{H \ot T_x^*\C M} \big) < \ep.
\]
Here $(df)(x) : H \to H \ot T_x^*\C M$ is defined in any smooth chart $(\phi,U)$ near $x$ by $(df)(x) := \sum_{i=1}^N \frac{\pa f}{\pa \phi_i}(x) \ot (d\phi_i)(x)$.

The $C^1$-maps which vanish at infinity form a right module over $C^1_0(\C M)$, where the action is given by pointwise scalar multiplication.

A $C^1$-map which vanish at infinity will often be referred to as a \emph{$C^1_0$-map}.
\end{dfn}


\begin{dfn}
A map $f : \C M \to \sL(H,G)$ is \emph{strongly differentiable} when $f(\xi) : \C M \to G$, $f(\xi)(x) := f(x)(\xi)$ is continuously differentiable for each $\xi \in H$.

The map $f : \C M \to \sL(H,G)$ is \emph{$*$-strongly differentiable} when it is strongly differentiable and when the \emph{adjoint} $f^* : x \mapsto f(x)^*$ is strongly differentiable as well.
\end{dfn}

\begin{remark}
The strong derivative of a strongly differentiable map, $df(x) : H \to G \ot T_x^*\C M$ defined by $df(x)(\xi) := d(f(\xi))(x)$, is a bounded operator for each $x \in \C M$. This is a consequence of the Banach-Steinhaus theorem since each $\frac{\pa f}{\pa \phi_i}(x) : H \to G$ is the strong limit of a sequence of bounded operators, where $(\phi,U)$ is a smooth chart near $x \in \C M$. See \cite[Theorem 2.8]{Rud:FA}.
\end{remark}

Before giving the main definition of this section, we present some preparatory lemmas on strongly differentiable maps.

Let $\phi : U \to \rr^N$ be a smooth chart and recall that $g_\phi : U \to GL_N(\rr) \su M_N(\cc)$ is the matrix valued map given by $(g_\phi)_{ij} = \inn{\frac{\pa}{\pa \phi_i},\frac{\pa}{\pa \phi_j}}$. In the following lemma, $g_\phi : U \to M_N(\cc)$ is perceived as a map with values in the $C^*$-algebra of linear operators $\sL(\cc^N,\cc^N)$.

\begin{lemma}\label{l:opecon}
Let $\phi : U \to \rr^N$ be a smooth chart with $\phi(U)$ convex and with $\sup_{z \in U}\|g_\phi(z)\|_\infty < \infty$. Suppose that $f : U \to \sL(H,G)$ is strongly differentiable with $\sup_{z \in U}\|(df)(z)\|_\infty < \infty$. Then
\[
\|f(x) - f(y)\|_\infty \leq \|\phi(x) - \phi(y)\|_{\rr^N} \cd \T{sup}_{z \in U}\|(df)(z)\|_{\infty} \cd \sup_{z \in U} \|g_\phi(z)\|_\infty^{1/2}
\]
for all $x,y \in U$, where $\|\cd\|_{\rr^N} : \rr^N \to [0,\infty)$ is the Euclidian norm. In particular $f : U \to \sL(H,G)$ is operator norm continuous.
\end{lemma}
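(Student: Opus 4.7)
The plan is to reduce the estimate to a one–variable fundamental theorem of calculus along a straight line in coordinates, and then to track how the Euclidean norm on $\phi(U)$ controls the Riemannian norm on $U$ through the matrix $g_\phi$.

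Fix $x,y \in U$ and $\xi \in H$ with $\|\xi\|_H \le 1$. Because $\phi(U) \su \rr^N$ is convex, I can define the smooth path
\[
\ga : [0,1] \to U, \q \ga(t) := \phi^{-1}\big((1-t)\phi(x) + t\phi(y)\big).
\]
The strong differentiability of $f$ means $f(\xi) : U \to G$ is $C^1$, so by the fundamental theorem of calculus in $G$ (applied componentwise to a bounded integrable $G$-valued map),
\[
f(y)(\xi) - f(x)(\xi) \;=\; \int_0^1 \frac{d}{dt}\big(f(\xi)\ci \ga\big)(t)\, dt,
\]
and by the chain rule the integrand equals $(df)(\ga(t))(\xi)\big(\ga'(t)\big)$, where the last pairing denotes the natural evaluation $G \ot T_{\ga(t)}^*\C M \ti T_{\ga(t)}\C M \to G$.

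The next step is the pointwise bound. Cauchy–Schwarz in the Hilbert tensor product $G \ot T_{\ga(t)}^*\C M$ gives
\[
\|(df)(\ga(t))(\xi)(\ga'(t))\|_G \;\le\; \|(df)(\ga(t))(\xi)\|_{G \ot T_{\ga(t)}^*\C M}\cd \|\ga'(t)\|_{T_{\ga(t)}\C M},
\]
and the first factor is at most $\|(df)(\ga(t))\|_\infty \cd \|\xi\|_H$ by definition of the operator norm of the strong derivative. For the second factor I expand $\ga'(t)$ in the coordinate basis: writing $u(t) := (1-t)\phi(x) + t\phi(y)$, one has $u'(t) = \phi(y)-\phi(x)$ and $\ga'(t) = \sum_i u_i'(t)\,\tfrac{\pa}{\pa \phi_i}\big|_{\ga(t)}$, so
\[
\|\ga'(t)\|^2_{T_{\ga(t)}\C M} \;=\; \langle g_\phi(\ga(t)) u'(t), u'(t)\rangle_{\cc^N} \;\le\; \|g_\phi(\ga(t))\|_\infty \cd \|\phi(y)-\phi(x)\|_{\rr^N}^2.
\]
Taking square roots produces the $\sup \|g_\phi\|_\infty^{1/2}$ factor and the Euclidean distance in the stated form.

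Combining the two bounds inside the integral and using that the integrand is dominated uniformly in $t \in [0,1]$ by the product of suprema, I obtain
\[
\|f(y)(\xi)-f(x)(\xi)\|_G \;\le\; \|\phi(y)-\phi(x)\|_{\rr^N}\cd \sup_{z\in U}\|(df)(z)\|_\infty \cd \sup_{z \in U}\|g_\phi(z)\|_\infty^{1/2}
\]
for every unit vector $\xi \in H$; taking the supremum over the unit ball yields the claimed operator norm inequality. Operator norm continuity of $f$ is then immediate, since $\phi : U \to \phi(U)$ is a homeomorphism so $\|\phi(y)-\phi(x)\|_{\rr^N} \to 0$ as $y \to x$. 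The only potentially delicate point is step~3, because the strong derivative a priori lives in $\sL(H, G \ot T^*\C M)$; but once one notices that the relevant pairing with a tangent vector is exactly contraction in the Hilbert tensor factor, the estimate is a straightforward application of Cauchy–Schwarz, and no issue of switching strong with norm limits has to be addressed.
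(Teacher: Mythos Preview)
Your proof is correct and follows essentially the same strategy as the paper: integrate along the straight-line path in the convex set $\phi(U)$ and control the Riemannian length of $\ga'(t)$ through $g_\phi$. The only cosmetic difference is that the paper first reduces to scalar functions by pairing with an additional vector $\eta \in G$ and then invokes the scalar mean value inequality, whereas you apply the fundamental theorem of calculus directly to the $G$-valued map $f(\xi)\ci\ga$; both routes yield the identical estimate.
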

\begin{proof}
To ease the notation, let $\ga_\phi := \sup_{z \in U}\|g_\phi(z)\|_\infty^{1/2}$.
%

Let $h : U \to \cc$ be a $C^1$-function with $\sup_{z \in U}\|(dh)(z)\|_\infty < \infty$. It is a basic consequence of the mean value theorem that
\[
|h(x) - h(y)| \leq \|\phi(x) - \phi(y)\|_{\rr^N} \cd
\sup_{z \in U} \|(dh)(z)\|_\infty \cd \ga_\phi,
\]
for all $x,y \in U$, see \cite[Chapter I, Corollary 4.2]{Lan:IDM}.

Let now $\xi \in H$, $\eta \in G$. It follows from the above estimate that
\[
\begin{split}
& |\inn{\eta,f(\xi)}(x) - \inn{\eta,f(\xi)}(y)|
\leq \|\phi(x) - \phi(y)\|_{\rr^N} \cd 
\sup_{z \in U} \|d\inn{\eta,f(\xi)}(z)\|_\infty \cd \ga_\phi \\
& \q \leq \|\phi(x) - \phi(y)\|_{\rr^N} \cd \|\eta\|_G \cd \|\xi\|_H
\cd \sup_{z \in U}\|(df)(z)\|_\infty \cd \ga_\phi.
\end{split}
\]
These inequalities prove the lemma.
\end{proof}

\begin{lemma}\label{l:adjest}
Let $f : \C M \to \sL(H,G)$ be $*$-strongly differentiable. Then
$\|d(f^*)(x)\|_\infty \leq \sqrt{N} \cd \|df(x)\|_\infty$ for each $x \in \C M$, where $N \in \nn$ is the dimension of the smooth Riemannian manifold $\C M$.
\end{lemma}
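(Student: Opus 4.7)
The plan is to compute both strong derivatives at the point $x$ in a \emph{normal} coordinate chart centered at $x$. Choose a normal chart $\phi_{x,r} : U_{x,r} \to B_r(0)$, so that the tangent vectors $\frac{\pa}{\pa \phi_i}\big|_x$ form an orthonormal basis of $(T_x\C M)_\rr$ and, dually, $e_i := (d\phi_i)(x)$ form an orthonormal basis of $T_x^*\C M$. Then by Definition \ref{d:HilBunI},
\[
df(x) = \sum_{i=1}^N A_i \ot e_i, \q A_i := \frac{\pa f}{\pa \phi_i}(x) \in \sL(H,G),
\]
and the operator norm with respect to the induced Hilbert space structure on $G \ot T_x^*\C M$ equals $\|df(x)\|_\infty^2 = \sup_{\|\xi\|_H = 1} \sum_{i=1}^N \|A_i \xi\|_G^2$.

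The key step is to identify $d(f^*)(x)$ with $\sum_i A_i^* \ot e_i$. I would fix $\xi \in H$ and $\eta \in G$ and exploit that $\inn{\xi, f^*(z)\eta}_H = \overline{\inn{\eta, f(z)\xi}_G}$ as functions of $z \in U_{x,r}$; differentiating both sides along $\frac{\pa}{\pa \phi_i}$ at $z = x$ (legitimate because $f^*$ is strongly differentiable by hypothesis) yields
\[
\binn{\xi, \tfrac{\pa f^*}{\pa \phi_i}(x)\eta}_H = \binn{\tfrac{\pa f}{\pa \phi_i}(x)\xi, \eta}_G = \binn{\xi, A_i^* \eta}_H,
\]
so $\frac{\pa f^*}{\pa \phi_i}(x) = A_i^*$ and hence $d(f^*)(x) = \sum_i A_i^* \ot e_i$.

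It then remains to compare the two Hilbert-space operator norms. For any $\eta \in G$ of unit length,
\[
\sum_{i=1}^N \|A_i^* \eta\|_H^2 \le \sum_{i=1}^N \|A_i^*\|_\infty^2 = \sum_{i=1}^N \|A_i\|_\infty^2.
\]
Each summand is bounded by $\|df(x)\|_\infty^2$, since $\|A_j\|_\infty^2 = \sup_{\|\xi\|=1}\|A_j\xi\|_G^2 \le \sup_{\|\xi\|=1}\sum_i \|A_i\xi\|_G^2 = \|df(x)\|_\infty^2$. Summing over $j = 1, \ldots, N$ gives $\|d(f^*)(x)\|_\infty^2 \le N \cd \|df(x)\|_\infty^2$, which is the stated estimate.

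I expect no substantive obstacle: the only slightly delicate point is justifying that the partial derivative and the adjoint commute, but this is exactly the content of the $*$-strong differentiability hypothesis, and the rest is a clean computation in an orthonormal frame. The factor $\sqrt{N}$ is moreover sharp, as can be seen by taking $A_i = |e_1\rangle\langle e_i|$, which shows the estimate cannot be improved without extra structural assumptions on $f$.
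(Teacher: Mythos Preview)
Your proof is correct and follows essentially the same approach as the paper's: both pick a chart with $\{(d\phi_i)(x)\}$ orthonormal, identify $\frac{\pa f^*}{\pa \phi_i}(x) = A_i^*$, and then bound $\|\sum_i A_i A_i^*\|_\infty$ by $N\,\|\sum_i A_i^* A_i\|_\infty$. The only cosmetic difference is that the paper quotes this last inequality as ``basic $C^*$-algebra theory'' while you prove it directly via $\|A_j\|_\infty^2 \le \|df(x)\|_\infty^2$; your added sharpness remark is a nice bonus not present in the paper.
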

\begin{proof}
Let $x \in \C M$. Choose a smooth chart $\phi : U \to \rr^N$ with $x \in U$ such that $\{(d\phi_i)(x)\}_{i\in \{1,\ldots,N\}}$ is an orthonomal basis for $T_x^*\C M$.

For each $i \in \{1,\ldots,N\}$, let $A_i := \frac{\pa f}{\pa \phi_i}\big|_x : H \to G$ denote the strong derivative at $x \in \C M$. Remark that the adjoint is given by $A_i^* = \frac{\pa (f^*)}{\pa \phi_i}\big|_x : G \to H$.

Let $\xi \in H$ and use that $\{(d\phi_i)(x)\}$ is an orthonormal basis for $T_x^*\C M$ to compute as follows,
\[
\|d(f)(x)(\xi)\|_{G \ot T_x^*\C M}^2 =  \sum_{i=1}^N \inn{A_i \xi,A_i \xi}
= \inn{\xi,\sum_{i=1}^N A_i^* A_i \xi}.
\]
This shows that $\|d(f)(x)\|_\infty = \|\sum_{i=1}^N A_i^* A_i\|_\infty^{1/2}$. Similarly, $\|d(f^*)(x)\|_\infty = \|\sum_{i=1}^N A_i A_i^*\|_\infty^{1/2}$.

Since it follows from basic $C^*$-algebra theory that $\|\sum_{i=1}^N A_i A_i^*\|_\infty \leq N \cd \|\sum_{i=1}^N A_i^* A_i\|_\infty$, this proves the lemma.
%
%
\end{proof}

From now on it will be assumed that the smooth Riemannian manifold $\C M$ has bounded geometry. 


\begin{dfn}\label{d:hilbou}
Let $r,s \in (0,r_{\T{inf}})$ satisfy the bounded geometry conditions of Definition \ref{d:bougeo} and apply the notation $(\phi_x,U_x) := (\phi_{x,r},U_{x,r})$ and $(\phi_x,V_x) := (\phi_{x,s},U_{x,s})$ for the normal charts at a point $x \in \C M$.

A \emph{Hilbert bundle of bounded geometry} over $\C M$ consists of a topological Hausdorff space $\sH$, a continuous surjective map $\pi : \sH \to \C M$, and a fiber preserving homeomorphism $\psi_x : \pi^{-1}(U_x) \to U_x \ti H$ for each open set in the cover $\{U_x\}_{x \in \C M}$ of $\C M$, such that
\begin{enumerate}
\item The transition maps $\tau_{x,y} := \psi_x \psi_y^{-1} : U_x \cap U_y \to \sL(H)$ are strongly differentiable with $\tau_{i,j}(z)$ unitary for each $z \in U_x \cap U_y$.
\item $\T{sup}_{x,y \in \C M} \|d\tau_{x,y}\|_\infty < \infty$.
\end{enumerate}

Let $\sG$ be another Hilbert bundle of bounded geometry with local trivializations $\rho_x : \pi^{-1}(V_x) \to V_x \ti G$. A \emph{morphism} of Hilbert bundles of bounded geometry is a continuous fiber preserving map $\al : \sH \to \sG$ with $*$-strongly differentiable transition maps $\al_{y,x} := \rho_y  \al \psi_x^{-1} : V_y \cap U_x \to \sL(H,G)$ and with
\[
\T{sup}_{x,y \in \C M}\big( \|d\al_{y,x}\|_\infty + \|\al_{y,x}\|_\infty\big)< \infty.
\]

The \emph{category of Hilbert bundles of bounded geometry} over $\C M$ is denoted by $\G{Hilb}_{\C M}$.
\end{dfn}

Recall from above that $(d\al_{y,x})(z) : H \to G \ot T_z^* \C M$ is defined by $(d\al_{y,x})(z)(\xi) = d( \al_{y,x}(\xi))(z)$, where $\al_{y,x}(\xi) : V_y \cap U_x \to G$ for each $\xi \in H$. The norms appearing in the above definition refer to supremum norms, as an example,
\[
\|d \al_{y,x}\|_\infty
:= \T{sup}_{z \in V_y \cap U_x} \|(d \al_{y,x})(z)\|_\infty.
\]
%

\begin{remark}\label{r:idemor}
The identity map $1_{\sH} : \sH \to \sH$ is a morphism of Hilbert bundles of bounded geometry. Indeed, this follows immediately from the conditions $(1)$ and $(2)$ of Definition \ref{d:hilbou}. Notice that $\tau_{x,y}^* = \tau_{y,x}$ for all $x,y \in \C M$.
\end{remark}

\begin{remark}\label{r:her}
Each fiber $\sH_z := \pi^{-1}(\{z\})$ of a Hilbert bundle of bounded geometry is a Hilbert space with inner product $\inn{\cd,\cd}_z : \sH_z \ti \sH_z \to \cc$, $\inn{\xi,\eta}_z := \inn{\psi_x(\xi),\psi_x(\eta)}$ for any $x \in \C M$ with $z \in U_x$. Notice that this inner product is well-defined since $\tau_{x,y}(z)$ is unitary when $z \in U_x \cap U_y$.

Let $\al : \sH \to \sG$ be a morphism of Hilbert bundles of bounded geometry. Then the \emph{adjoint morphism} $\al^* : \sG \to \sH$ is defined fiber wise by $(\al^*)_z := (\al_z)^* : \sG_z \to \sH_z$ using the above Hilbert space structures.

The adjoint morphism is a morphism of Hilbert bundles of bounded geometry since $(\al^*)_{x,y}(z) = \al_{y,x}(z)^*$ for each $x,y \in \C M$ and each $z \in U_x \cap V_y$. This implies that the transition maps for the adjoint morphism are $*$-strongly differentiable with
\[
\T{sup}_{x,y \in \C M} \big( \|d(\al^*_{x,y})\|_\infty + \|\al^*_{x,y}\|_\infty\big)
\leq 
\sqrt{N} \cd \T{sup}_{x,y \in \C M}\big( \|d(\al_{y,x})\|_\infty 
+ \|\al_{y,x}\|_\infty \big)
< \infty,
\]
by Lemma \ref{l:adjest}.

The morphism $\al$ is \emph{unitary} when $\al^* \ci \al = 1_{\sH}$ and $\al \ci \al^* = 1_{\sG}$.
\end{remark}



\begin{dfn}\label{d:HilBunII}
A \emph{section} of a Hilbert bundle of bounded geometry $\pi : \sH \to \C M$ is a continuous map $s : \C M \to \sH$ such that $\pi \ci s = 1_{\C M}$. The section is \emph{continuously differentiable} (or $C^1$) when $\psi_x \ci s : U_x \to H$ is continuously differentiable for each $x \in \C M$.

A $C^1$-section \emph{vanishes at infinity} when there for each $\ep>0$ exists a compact set $\C K \su \C M$ such that
\[
\T{sup}_{x \in \C M} \Big( \T{sup}_{y \in (\C M\setminus \C K)\cap U_x}\big( \|(\psi_x \ci s)(y)\|_H + \|d(\psi_x \ci s)(y)\|_{H \ot T_y^*\C M} \big)\Big) < \ep.
\]

The set of $C^1$-sections of $\sH$ which vanish at infinity is denoted by $\Ga_0^1(\sH)$. It is a right module over the ring $C^1_0(\C M)$. 

A $C^1$-section which vanish at infinity will often be referred to as a \emph{$C^1_0$-section}.
\end{dfn}

Let $\Ga_0^1 : \G{Hilb}_{\C M} \to \G{Mod}_{C^1_0(\C M)}$ denote the covariant functor which maps a Hilbert bundle of bounded geometry $\sH \to \C M$ to the right module of $C^1_0$-sections $\Ga^1_0(\sH)$, and a morphism of Hilbert bundles of bounded geometry $\al : \sH \to \sG$ to the morphism of modules, $\Ga^1_0(\al) : \Ga^1_0(\sH) \to \Ga^1_0(\sG)$, $\Ga^1_0(\al)(s) := \al \ci s$. 

Remark that the derivative of $\rho_x \ci \al \ci s : U_x \to G$ is given by
\[
d(\al_{x,x} \ci \psi_x \ci s)(y) = d(\al_{x,x})(y)(\psi_x \ci s)(y) + (\al_{x,x}(y) \ot 1)d(\psi_x \ci s)(y),
\]
for all $y \in U_x$. It follows that the $C^1$-section $\Ga_0^1(\al)(s) : \C M \to \sG$ vanishes at infinity.

\section{Operator $*$-modules}\label{s:opemodI}

\subsection{Operator $*$-algebras}

Let $H$ and $G$ be Hilbert spaces, and let $X \su \sL(H,G)$ be a subspace which is closed in the operator norm. Then the vector space $M(X) := \lim_{n \to \infty} M_n(X)$ of finite matrices over $X$ has a canonical norm $\|\cd\|_X$ coming from the identifications $M_n(X) \su M_n(\sL(H,G)) \cong \sL(H^n,G^n)$. The properties of the pair $\big( M(X), \|\cd\|_X\big)$ are crystallized in the next definition.

Notice that the above construction yields a canonical norm $\|\cd\|_\cc : M(\cc) \to [0,\infty)$ on the finite matrices over $\cc$ since $\cc \cong \sL(\cc,\cc)$. For each $n \in \nn$ the norm $\|\cd\|_\cc : M_n(\cc) \su M(\cc) \to [0,\infty)$ coincides with the unique $C^*$-algebra norm.

\begin{dfn}\label{d:opespa}
An \emph{operator space} is a vector space $X$ with a norm $\|\cd\|_X$ on the finite matrices $M(X) := \lim_{n \to \infty} M_n(X)$ such that
\begin{enumerate}
\item The normed space $X \su M(X)$ is a Banach space.
\item The inequality $\|v \cd \xi \cd w\|_X \leq \|v\|_{\cc} \cd \|\xi\|_X \cd \|w\|_\cc$ holds for all $v,w \in M(\cc)$ and all $\xi \in M(X)$.
\item The equality $\|\xi \op \eta \|_X = \T{max}\{\|\xi\|_X, \|\eta\|_X\}$ holds for all $\xi \in M_n(X)$ and $\eta \in M_m(X)$, where $\xi \op \eta \in M_{n+m}(X)$ is the direct sum of the matrices.
%
\end{enumerate}

A \emph{morphism} of operator spaces is a \emph{completely bounded} linear map $\al : X \to Y$. The term completely bounded means that $\al_n : M_n(X) \to M_n(Y)$ is a bounded operator for each $n \in \nn$ and that $\T{sup}_n \|\al_n\|_\infty < \infty$. The supremum is denoted by $\|\al\|_{\T{cb}} := \T{sup}_n\|\al_n\|_\infty$ and is referred to as the \emph{completely bounded} norm.
\end{dfn}

By a fundamental theorem of Ruan each operator space $X$ is completely isometric to a closed subspace of $\sL(H)$ for some Hilbert space $H$. See \cite[Theorem 3.1]{Rua:SCA}.

\begin{dfn}
An \emph{operator algebra} is an operator space $A$ with a \emph{completely bounded} product $m : A \ti A \to A$ which (together with the vector space structure) makes $A$ an algebra over $\cc$. The complete boundedness means that there exists a constant $C > 0$ such that $\|a \cd b\|_A \leq C \cd \|a\|_A \cd \|b\|_A$ for all finite matrices $a,b \in M(A)$.

A \emph{morphism} of operator algebras $\al : A \to B$ is a morphism of the underlying operator space structures with $\al(a \cd b) = \al(a) \cd \al(b)$ for all $a,b \in A$.
\end{dfn}

By a fundamental theorem of Blecher, Ruan, and Sinclair each operator algebra is isomorphic to a closed subalgebra of the bounded operators on some Hilbert space, see \cite[Theorem 3.1]{BlRuSi:COA} and \cite[Theorem 2.2]{Ble:COA}.

\begin{dfn}\label{d:op*alg}
An \emph{operator $*$-algebra} is an operator algebra $A$ with a completely bounded involution $\da : A \to A$ which (together with the $\cc$-algebra structure) makes $A$ a $*$-algebra. The involution is defined on finite matrices by $(a^\da)_{ij} := (a_{ji})^\da$. It is thus required that there exists a constant $C > 0$ such that $\|a^\da\|_A \leq C \cd \|a\|_A$ for all finite matrices $a \in M(A)$.
\end{dfn}

Let $A \su \sL(H)$ be a closed subalgebra. Then $A$ is an operator $*$-algebra when there exists an invertible selfadjoint operator $g \in \sL(H)$ such that $g a^* g^{-1} \in A$ for all $a \in A$. The involution is given by $a^\da := g a^* g^{-1}$. Note that the usual involution $*$ on the bounded operators does \emph{not} necessarily map $A$ into itself. 

It follows from the above that any $C^*$-algebra is an operator $*$-algebra. The concept of an operator $*$-algebra is however more flexible as the following example indicates.

%

\subsubsection{Example: Differentiable functions}\label{ss:diffun}
Let $\C M$ be a smooth Riemannian manifold of dimension $N \in \nn$.

Remark that a Hilbert space $H$ becomes an operator space when identified with the bounded operators $\sL(\cc,H)$. More explicitly the matrix norm is defined by
\[
\|\xi\|_H := \|\inn{\xi,\xi}\|_{\cc}^{1/2} \, \, , \, \, \xi \in M(H), 
\]
where $\inn{\xi,\xi}_{ij} := \sum_k \inn{\xi_{ki},\xi_{kj}}$. There is in particular an operator space structure on each fiber $T_x^*\C M$ of the cotangent bundle coming from the Riemannian metric.

Define the norm $\|\cd\|_1 : M(C^1_0(\C M)) \to [0,\infty)$ by
\[
\|f\|_1 := \T{sup}_{x \in \C M} \big( \|f(x)\|_\cc^2 + \|df(x)\|_{T_x^*\C M}^2 \big)^{1/2}
\, \, , \, \, f \in M(C^1_0(\C M)),
\]
where $d : M(C^1_0(\C M)) \to M(\Ga_0(T^*\C M))$ is the de Rham differential applied entrywise.

Recall that the complex conjugation on $C^1_0(\C M)$ extends to an involution $\da : M(C^1_0(\C M)) \to M(C^1_0(\C M))$ defined by $(f^\da)_{ij} := \ov{f_{ji}}$.

The next proposition is a variation of \cite[Proposition 2.8]{KaLe:SFU}.

\begin{prop}\label{p:diffun}
The $*$-algebra  $C^1_0(\C M)$ is an operator $*$-algebra when equipped with the matrix norm $\|\cd\|_1 : M(C^1_0(\C M)) \to [0,\infty)$. 
\end{prop}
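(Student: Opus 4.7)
My plan is to verify, in turn, the three conditions defining an operator $*$-algebra: the operator space axioms, complete boundedness of multiplication, and complete boundedness of the involution. Throughout I would work pointwise in $x \in \C M$, exploiting the column operator space structure on each fiber $T_x^*\C M$ coming from the identification $T_x^*\C M \cong \sL(\cc, T_x^*\C M)$.

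For the operator space structure, completeness of $(C^1_0(\C M), \|\cd\|_1)$ is a direct manifold analogue of the classical fact that $C^1[0,1]$ is complete in the $C^1$-norm: a Cauchy sequence $(f_n)$ gives rise to uniform limits $f \in C_0(\C M)$ and $\om \in \Ga_0(T^*\C M)$ of values and of de Rham derivatives, and a local application of the fundamental theorem of calculus in normal charts identifies $\om$ as $df$. The bimodule axiom follows because the de Rham differential is $\cc$-linear, so $d(v \xi w) = v (d\xi) w$ for scalar matrices $v, w \in M(\cc)$, and then the bimodule property for each of the two terms in the definition of $\|\cd\|_1$ is inherited from the operator space structures on $\cc$ and on $T_x^*\C M$. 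The direct sum axiom reduces pointwise to block-diagonality of $(\xi \op \eta)(x)$ and of $d(\xi \op \eta)(x)$, whose operator norms are the maxima of the block norms. For complete boundedness of multiplication I would apply the Leibniz rule entrywise,
\[
d(fg)(x) = (df)(x) \cd g(x) + f(x) \cd (dg)(x),
\]
combine with the pointwise estimate $\|(fg)(x)\|_\cc \leq \|f(x)\|_\cc \|g(x)\|_\cc$, and invoke the bimodule property together with $(a+b)^2 \leq 2(a^2 + b^2)$ to conclude that $\|fg\|_1 \leq \sqrt{2}\,\|f\|_1 \|g\|_1$.

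The main obstacle is to show that the entrywise conjugate-transpose involution $\da$ is completely bounded; I expect the necessary constant to depend on the dimension $N$ of $\C M$. Pointwise, $f^\da(x)$ equals the matrix adjoint $f(x)^*$, so $\|f^\da(x)\|_\cc = \|f(x)\|_\cc$, and the only nontrivial estimate is on $\|d(f^\da)(x)\|$. I would fix a real orthonormal basis $\{e_1^*, \ldots, e_N^*\}$ of $T_x^*\C M$ and expand $df(x) = \sum_{k=1}^N e_k^* \ot A_k$ with $A_k \in M(\cc)$; using $(f^\da)_{ij} = \ov{f_{ji}}$ one then computes that $d(f^\da)(x) = \sum_k e_k^* \ot A_k^*$. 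The column operator space structure on $T_x^*\C M$ then reads off
\[
\|df(x)\|_{T_x^*\C M}^2 = \Big\|\sum_{k=1}^N A_k^* A_k\Big\|_\cc \q \T{and} \q \|d(f^\da)(x)\|_{T_x^*\C M}^2 = \Big\|\sum_{k=1}^N A_k A_k^*\Big\|_\cc.
\]
Because $A_k^* A_k \leq \sum_j A_j^* A_j$ in the positive cone of $M(\cc)$, each $\|A_k\|_\cc^2 = \|A_k A_k^*\|_\cc \leq \|df(x)\|_{T_x^*\C M}^2$, and summing over $k$ yields $\|d(f^\da)(x)\|_{T_x^*\C M}^2 \leq N\, \|df(x)\|_{T_x^*\C M}^2$. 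Therefore $\|f^\da\|_1 \leq \sqrt{N}\,\|f\|_1$, completing the verification that $C^1_0(\C M)$ is an operator $*$-algebra. This final estimate is in the spirit of Lemma \ref{l:adjest} and, like it, crucially uses that complex conjugation of a matrix of $1$-forms is completely bounded with a dimension-dependent constant rather than completely isometric.
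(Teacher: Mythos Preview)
Your argument is correct and follows essentially the same route as the paper. The paper packages the operator space verification by rewriting $\|f\|_1 = \sup_x \|(f(x),df(x))\|_{\cc \op T_x^*\C M}$, and handles the involution by identifying $f \in M_n(C^1_0(\C M))$ with a map $\ka(f) : \C M \to \sL(\cc^n)$ and then citing Lemma~\ref{l:adjest}; your inline computation of $\|df(x)\|^2 = \|\sum_k A_k^* A_k\|$ versus $\|d(f^\da)(x)\|^2 = \|\sum_k A_k A_k^*\|$ is exactly the content of that lemma, reproduced rather than quoted. Your multiplication constant $\sqrt{2}$ is sharper than the paper's $\sqrt{5}$, but this is immaterial for the statement.
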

\begin{proof}
Using that $\cc$ and the fibers $T_x^*\C M$ are operator spaces it is not hard to prove that $C^1_0(\C M)$ is an operator space. Indeed, the norm $\|\cd\|_1$ can be rewritten as $\|f\|_1 = \T{sup}_{x \in \C M}\big\|\big(f(x),df(x) \big)\big\|_{\cc \op T_x^* \C M}$, where $\cc \op T_x^*\C M$ is given the direct sum Hilbert space structure.
%

Let $f,g \in M(C^1_0(\C M))$ be finite matrices and let $x \in \C M$. The Leibnitz rule and the operator space structures on $\cc$ and the fiber $T_x^*\C M$ then implies that
\[
\begin{split}
& \|(f\cd g)(x)\|_{\cc}^2 +  \|d(f \cd g)(x)\|_{T_x^*\C M}^2 \\
& \q \leq \|f(x)\|_\cc^2 \cd \|g(x)\|_\cc^2 + \big( \|(df)(x)\|_{T_x^*\C M} \cd \|g(x)\|_\cc + \|f(x)\|_\cc \cd \|(dg)(x)\|_{T_x^*\C M} \big)^2 \\
& \q \leq 5 \cd \|f\|_1^2 \cd \|g\|_1^2,
\end{split}
\]
for all $x \in \C M$. This shows that $C^1_0(\C M)$ is an operator algebra.

Let $f \in M(C^1_0(\C M))$ be a finite matrix and let $x \in \C M$. Without loss of generality it may be supposed that $f \in M_n(C^1_0(\C M))$ for some $n \in \nn$. The element $f$ may thus be perceived as a $*$-strongly differentiable map $\ka(f) : \C M \to \sL(\cc^n,\cc^n)$. 

Notice now that $\ka(f^\da) = \ka(f)^*$ and furthermore that $\|f(x)\|_\cc = \|\ka(f)(x)\|_\infty$ and $\|(df)(x)\|_{T_x^*\C M} = \|d(\ka(f))(x)\|_\infty$. The result of Lemma \ref{l:adjest} then implies that
\[
\begin{split}
& \|f(x)\|_\cc^2 + \|(df)(x)\|^2_{T_x^*\C M}
= \|\ka(f)(x)\|_\infty^2 + \|d(\ka(f))(x)\|_\infty^2 \\
& \q \leq  N \cd \big( \|\ka(f)^* (x)\|_\infty^2 + \|d(\ka(f)^*)(x)\|^2_\infty \big)
= N \cd \big( \|f^\da (x)\|_\cc^2 +  \|(df^\da)(x)\|^2_{T_x^*\C M} \big).
\end{split}
\]
This proves that the involution $\da : C^1_0(\C M) \to C^1_0(\C M)$ given by complex conjutation is completely bounded.
\end{proof}

\subsection{Hermitian operator modules}\label{s:herope}

\begin{dfn}
Let $A$ be an operator algebra. A (right) \emph{operator module} over $A$ is an operator space $X$ with a completely bounded right module action of $A$. Thus, there exists a constant $C > 0$ such that
\[
\|\xi \cd a\|_X \leq C \cd \|\xi\|_X \cd \|a\|_A
\]
for all finite matrices $\xi \in M(X)$ and $a \in M(A)$. A \emph{morphism} of operator modules over $A$ is a morphism $\al : X \to Y$ of the underlying operator spaces with $\al(\xi \cd a) = \al(\xi) \cd a$.
%
%
\end{dfn}

Any operator module $X$ over $A$ is isomorphic to a concrete operator module. This means that there exists a Hilbert space $H$, a closed subspace $Y \su \sL(H)$ and a closed subalgebra $B \su \sL(H)$ together with isomorphisms $\phi : X \cong Y$ and $\pi : A \cong B$ such that $\phi(\xi \cd a) = \phi(\xi) \ci \pi(a)$ for all $\xi \in X$ and $a \in A$. See \cite[Theorem 2.2]{Ble:GHM} and \cite[Corollary 3.3]{CES:CMC}.

\begin{dfn}\label{d:hermod}
Let $A$ be an operator $*$-algebra. A \emph{hermitian operator module} over $A$ is an operator module $X$ over $A$ with a completely bounded pairing $\inn{\cd,\cd} : X \ti X \to A$ such that
\begin{enumerate}
\item $\inn{\xi,\eta \cd a} = \inn{\xi,\eta} \cd a$ for all $\xi,\eta \in X$, and all $a \in A$.
\item $\inn{\xi,\eta \cd \la + \ze \cd \mu} = \inn{\xi,\eta}\cd \la + \inn{\xi,\ze} \cd \mu$ for all $\xi,\eta,\ze \in X$, and all $\la,\mu \in \cc$.
\item $\inn{\xi,\eta} = \inn{\eta,\xi}^\da$ for all $\xi,\eta \in X$.
\end{enumerate}
The complete boundedness means that there exists a constant $C > 0$ such that
\[
\|\inn{\xi,\eta}\|_A \leq C \cd \|\xi\|_X \cd \|\eta\|_X
\]
for all finite matrices $\xi,\eta \in M(X)$, where $\inn{\xi,\eta}_{ij} := \sum_k \inn{\xi_{ki},\eta_{kj}}$.

The map $\inn{\cd,\cd} : X \ti X \to A$ will be referred to as a \emph{hermitian form}.
%

A \emph{morphism} of hermitian operator modules is a morphism of operator modules $\al : X \to Y$ for which there exists a morphism of right operator modules $\al^* : Y \to X$ such that $\inn{\al(\xi),\eta}_Y = \inn{\xi,\al^*(\eta)}_X$ for all $\xi \in X$ and $\eta \in Y$. The morphism $\al^* : Y \to X$ is referred to as an \emph{adjoint} of $\al$. The morphism $\al$ is \emph{unitary} when $\al$ is invertible with $\al^* = \al^{-1}$.

The hermitian form $\inn{\cd,\cd}$ is \emph{non-degenerate} when the implication
\[
\big( \inn{\xi,\eta} = 0 \, \, \forall \xi \in X \big) \Rightarrow \eta = 0
\]
holds for all $\eta \in X$.
%
\end{dfn}

Remark that the adjoint $\al^* : Y \to X$ of a morphism $\al : X \to Y$ of non-degenerate hermitian operator modules is unique.

A basic example of a hermitian operator module is an operator $*$-algebra when considered as a right module over itself and equipped with the hermitian form $\inn{\cd,\cd} : A \ti A \to A$ given by $\inn{a,b} := a^\da \cd b$. Remark that $\inn{\cd,\cd}$ is completely bounded since $\inn{a,b} = a^\da \cd b$ for all finite matrices $a,b \in M(A)$. See Definition \ref{d:op*alg}.

In the following sections we will see more examples of hermitian operator modules.

\subsection{The standard module}\label{s:stamod}

Let $X$ be an operator space. Let $c_c(X)$ denote the vector subspace of $M(X)$ such that $\xi \in c_c(X) \lrar ( \xi_{ij} = 0 \T{ when } j > 1)$. The following notation will be in effect, $\xi(k) := \xi_{k1}$ for each $\xi \in c_c(X)$.
%

For each $n \in \nn$, define the map $\al_n : M(c_c(X)) \to M(X)$ by 
\[
\al_n(\xi)_{(i-1)n + k,j} := \xi_{i,j}(k) \q \xi \in M(c_c(X)),
\]
for all $i,j \in \nn$ and all $k \in \{1,\ldots,n\}$. In other words, $\al_n(\xi)$ is the block matrix where each block is an $(n \ti 1)$-matrix and the block in position $(i,j)$ is $P_n(\xi_{ij}) := (\xi_{ij}(1),\ldots,\xi_{ij}(n))^t$.

For each $n \in \nn$, notice that
\begin{equation}\label{eq:stamodI}
\al_n(v \cd \xi \cd w) = \be_n(v) \cd \al_n(\xi) \cd w,
\end{equation}
for all $v,w \in M(\cc)$ and $\xi \in M(c_c(X))$, where $\be_n(v)$ is the block matrix where each block is an $(n \ti n)$-matrix and the block in position $(i,j)$ is the diagonal matrix $\T{diag}(v_{ij})$. Thus, in formulas $\be_n(v)_{(i-1)n+k,(j-1)n+l} = \de_{kl} \cd v_{i,j}$, for all $i,j \in \nn$ and all $k,l \in \{1,\ldots,n\}$, where $\de_{kl} \in \{0,1\}$ is the Kronecker delta.

Define the matrix norm $\|\cd \|_{X^\infty} : M(c_c(X)) \to [0,\infty)$ by 
\[
\|\xi\|_{X^\infty} := \lim_{n \to \infty}\|\al_n(\xi)\|_X = \sup_{n \in \nn}\|\al_n(\xi)\|_X,
\]
for all finite matrices $\xi \in M(c_c(X))$. Using \eqref{eq:stamodI} it is not hard to see that this matrix norm satisfies the properties $(2)$ and $(3)$ in Definition \ref{d:opespa}.

\begin{dfn}
The \emph{standard sequence space} $X^\infty$ over $X$ is the operator space obtained as the completion of $c_c(X)$ with respect to the norm $\|\cd\|_{X^\infty}$ defined above.
\end{dfn}
%

\begin{prop}\label{p:infher}
Let $X$ be a hermitian operator module over an operator $*$-algebra $A$. Then $X^\infty$ becomes a hermitian operator module when equipped with the right action induced by $c_c(X) \ti A \to c_c(X)$, $(\xi \cd a)(i) := \xi(i) \cd a$ and the hermitian form induced by $\inn{\cd,\cd}_{X^\infty} : c_c(X) \ti c_c(X) \to A$, $\inn{\xi,\eta}_{X^\infty} := \sum_i \inn{\xi(i),\eta(i)}_X$.
\end{prop}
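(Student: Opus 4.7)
The plan is to establish everything at the algebraic level on the dense subspace $c_c(X) \su X^\infty$, and then extend by density and continuity. The induced right action $c_c(X) \ti A \to c_c(X)$ is well-defined since $(\xi \cd a)(i) = \xi(i) \cd a$ vanishes as soon as $\xi(i)$ does, and the pre-hermitian form is well-defined because $\sum_i \inn{\xi(i),\eta(i)}_X$ is a finite sum when $\xi,\eta \in c_c(X)$. The algebraic axioms $(1)$--$(3)$ of Definition \ref{d:hermod} then pass coordinatewise from $X$ to $c_c(X)$. The real task is to verify complete boundedness of both structures with respect to the matrix norm $\|\cd\|_{X^\infty}$, after which extension to $X^\infty$ and preservation of the axioms follow by continuity.

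For the right action, the essential identity is
\[
\al_n(\xi \cd a) = \al_n(\xi) \cd a
\]
for all $\xi \in M(c_c(X))$ and $a \in M(A)$. A straightforward unwinding shows that both sides have $((i-1)n+k,j)$-entry equal to $\sum_l \xi_{il}(k) \cd a_{lj}$ (no intervention of $\be_n$ is needed since $a$ multiplies from the right). Combined with the formula $\|\xi\|_{X^\infty} = \sup_n \|\al_n(\xi)\|_X$ and the complete boundedness of the action of $A$ on $X$, this produces a bound of the form $\|\xi \cd a\|_{X^\infty} \leq C \cd \|\xi\|_{X^\infty} \cd \|a\|_{M(A)}$. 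By density the action extends to a completely bounded right action of $A$ on $X^\infty$.

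For the hermitian form the corresponding statement is that for fixed $\xi,\eta \in M(c_c(X))$ there exists $N \in \nn$ with
\[
\inn{\al_n(\xi),\al_n(\eta)}_X = \inn{\xi,\eta}_{X^\infty} \q \T{for all } n \geq N,
\]
where on the right the matrix extension of $\inn{\cd,\cd}_{X^\infty}$ is understood. This holds because only finitely many entries of $\xi$ and $\eta$ are nonzero and each such entry is itself supported on finitely many indices. The complete boundedness of $\inn{\cd,\cd}_X$ on $M(X)$ then yields $\|\inn{\xi,\eta}_{X^\infty}\|_{M(A)} \leq C \cd \|\xi\|_{X^\infty} \cd \|\eta\|_{X^\infty}$, so the pairing extends continuously to a completely bounded hermitian form on $X^\infty$, and the axioms $(1)$--$(3)$ are inherited from $c_c(X)$ by joint continuity of the extended action and pairing.

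The main obstacle I anticipate is of a bookkeeping nature rather than a conceptual one: one must carefully reconcile the two-level indexing scheme of $\al_n$ (block row times intra-block row, as in equation \eqref{eq:stamodI}) with the matrix extensions of the $A$-action and of $\inn{\cd,\cd}_X$. In particular, the verification of the hermitian identity requires rewriting $\inn{\al_n(\xi),\al_n(\eta)}_{X,ij}$ as $\sum_k \sum_{l=1}^n \inn{\xi_{ki}(l),\eta_{kj}(l)}_X$ and comparing with $\inn{\xi,\eta}_{X^\infty,ij} = \sum_k \sum_l \inn{\xi_{ki}(l),\eta_{kj}(l)}_X$. Once these combinatorial identities are in hand, the rest is a routine density and continuity argument.
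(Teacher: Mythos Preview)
Your proposal is correct and follows essentially the same route as the paper: the key identities $\al_n(\xi \cd a) = \al_n(\xi) \cd a$ and $\inn{\al_n(\xi),\al_n(\eta)}_X = \inn{\xi,\eta}_{X^\infty}$ for $n$ large are exactly what the paper uses, and the complete boundedness estimates are then derived in the same way. If anything, your index-level verification of these identities is slightly more explicit than the paper's.
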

\begin{proof}
We will only prove that the right action and the hermitian form are completely bounded, leaving the rest of the verifications to the reader.

The right action is completely bounded since
\[
\al_n(\xi \cd a) = \al_n(\xi) \cd a
\]
for all $n \in \nn$, $\xi \in M(c_c(X))$, and $a \in M(A)$. Indeed, using that the right action on $X$ is completely bounded, this implies the existence of a constant $C > 0$ such that
\[
\|\xi \cd a\|_{X^\infty} = \T{sup}_{n \in \nn}\|\al_n(\xi) \cd a\|_X \leq C \cd \T{sup}_{n \in \nn}\|\al_n(\xi)\|_X \cd \|a\|_A = C \cd \|\xi\|_{X^\infty} \cd \|a\|_A,
\]
for all finite matrices $\xi \in M(c_c(X))$ and $a \in M(A)$.

To see that the hermitian form is completely bounded note that
\[
\binn{\xi,\eta}_{X^\infty} = \lim_{n \to \infty}\binn{\al_n(\xi), \al_n(\eta)}_X,
\]
for all $\xi,\eta \in M(c_c(X))$. Indeed, there exists an $n_0 \in \nn$ such that
\[
\begin{split}
\big(\binn{\xi,\eta}_{X^\infty}\big)_{ij} 
& = \sum_{k = 1}^\infty \inn{\xi_{ki},\eta_{kj}}_{X^\infty}
= \sum_{k=1}^\infty \sum_{l = 1}^n \inn{\xi_{ki}(l),\eta_{kj}(l)} \\
& = \sum_{k=1}^\infty \sum_{l = 1}^n \inn{\al_n(\xi)_{(k-1)n + l,i},\al_n(\eta)_{(k-1)n + l,j}}
= \inn{\al_n(\xi),\al_n(\eta)}_{ij}
\end{split}
\]
for all $n \geq n_0$ and all $i,j \in \nn$. Using the complete boundedness of the hermitian form $\inn{\cd,\cd}_X$, this implies that there exists a constant $C > 0$ such that
\[
\begin{split}
\|\inn{\xi,\eta}_{X^\infty}\|_A
& =  \lim_{n \to \infty}\big\|\binn{\al_n(\xi), \al_n(\eta)}_X\big\|_A
\leq C \cd \lim_{n \to \infty}\|\al_n(\xi)\|_X \cd \|\al_n(\eta)\|_X \\
& = C \cd \|\xi\|_{X^\infty} \cd \|\eta\|_{X^\infty},
\end{split}
\]
for all $\xi,\eta \in M(X^\infty)$.
\end{proof}

\begin{dfn}
Let $A$ be an operator $*$-algebra. The \emph{standard hermitian module} over $A$ is the standard sequence space $A^\infty$.
\end{dfn}

By the argument presented in the end of Section \ref{s:herope}, an operator $*$-algebra $A$ can be viewed as a hermitian operator module over itself. It  therefore follows from Proposition \ref{p:infher} that $A^\infty$ is a hermitian operator module. Explicitly, the completely bounded right action is induced by $(x \cd a)(i) := x(i) \cd a$ for all $x \in c_c(A)$ and $a \in A$. The completely bounded hermitian form is induced by $\inn{x,y}_{A^\infty} = \sum_i x(i)^\da \cd y(i)$ for all $x,y \in c_c(A)$.

%
%

\subsubsection{The standard module of a Riemannian manifold}\label{ss:starie}
Let $\C M$ be a smooth Riemannian manifold of dimension $N \in \nn$, and let $H$ be a separable Hilbert space of infinite dimension.

Consider the right module $C^1_0(\C M,H)$ over $C^1_0(\C M)$ of Hilbert space valued $C^1_0$-maps. See Definition \ref{d:HilBunI}. 

%

The next proposition is a variation of \cite[Proposition 3.6]{KaLe:SFU}.

\begin{prop}\label{p:starie}
The module $C^1_0(\C M,H)$ is isomorphic to the standard hermitian module $C_0^1(\C M)^\infty$.
\end{prop}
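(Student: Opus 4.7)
Plan: The idea is to fix an orthonormal basis $\{e_i\}_{i \geq 1}$ of $H$ and define $\Phi \colon c_c(C^1_0(\C M)) \to C^1_0(\C M, H)$ by the finite sum
\[
\Phi(\xi)(x) := \sum_{i} \xi(i)(x)\, e_i.
\]
Since $\xi \in c_c$ has finite support, each $\Phi(\xi)$ is a finite linear combination of $C^1_0$-maps and thus lies in $C^1_0(\C M, H)$, with $d\Phi(\xi)(x) = \sum_i e_i \otimes d\xi(i)(x)$. The map $\Phi$ is visibly $C^1_0(\C M)$-linear, and orthonormality of $\{e_i\}$ gives $\inn{\Phi(\xi)(x),\Phi(\eta)(x)}_H = \sum_i \ov{\xi(i)(x)}\,\eta(i)(x) = \inn{\xi,\eta}_{X^\infty}(x)$, so $\Phi$ preserves the hermitian forms.

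Next I would check that $\Phi$ is completely isometric when $C^1_0(\C M,H)$ is equipped with its natural matrix norm $\|f\|_n = \sup_x \bigl(\|f(x)\|_{M_n(H)}^2 + \|(df)(x)\|_{M_n(H\otimes T_x^*\C M)}^2\bigr)^{1/2}$, where $H$ and the fibers $H \otimes T_x^*\C M$ carry the column operator space structure. For $\xi \in M_n(c_c(C^1_0(\C M)))$ and any $k$ large enough to contain the support, the Gram matrix defining $\|\Phi_n(\xi)(x)\|_{M_n(H)}^2$ is $(\sum_{i,l} \ov{\xi_{i j_1}(l)(x)}\xi_{i j_2}(l)(x))_{j_1,j_2}$, which is exactly the Gram matrix computing $\|\alpha_k(\xi)(x)\|_{\cc}^2$; the analogous identification holds for the differentials in $T_x^*\C M$. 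Taking the supremum over $x$ therefore yields $\|\Phi_n(\xi)\|_n = \|\xi\|_{M_n(C^1_0(\C M)^\infty)}$, extending $\Phi$ to a completely isometric embedding $C^1_0(\C M)^\infty \hookrightarrow C^1_0(\C M,H)$.

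The main obstacle is the density of the image. Given $f \in C^1_0(\C M,H)$, set $f_k := \inn{e_k,f} \in C^1_0(\C M)$ and let $\xi_n$ be the truncation with $\xi_n(k) = f_k$ for $k \leq n$. Then
\[
\|\Phi(\xi_n) - f\|_1^2 = \sup_{x}\bigl( \|f(x) - P_n f(x)\|_H^2 + \|(df)(x) - (P_n \otimes 1)(df)(x)\|_{H\otimes T_x^*\C M}^2 \bigr),
\]
where $P_n$ projects onto $\T{span}(e_1,\ldots,e_n)$. Given $\ep > 0$, choose compact $\C K \su \C M$ so that $\|f(x)\|_H^2 + \|(df)(x)\|^2 < \ep^2/4$ outside $\C K$, which handles the tail outside $\C K$ uniformly since $P_n$ is a contraction. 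Inside $\C K$, $f(\C K) \su H$ is compact, so $\|f - P_n f\|_H \to 0$ uniformly on $\C K$; and covering $\C K$ by finitely many smooth charts $(\phi^\al, U_\al)$ in which $df(x) = \sum_j \partial_j^\al f(x) \otimes d\phi^\al_j(x)$, each $\partial_j^\al f|_{\C K \cap U_\al}$ has precompact image in $H$ and $\|d\phi^\al_j(x)\|_{T_x^*\C M}$ is bounded there, so uniform approximability follows from finitely many finite-dimensional approximations of compact subsets of $H$.

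Combining the three steps, $\Phi$ extends to a completely isometric bijection of hermitian operator modules $C^1_0(\C M)^\infty \to C^1_0(\C M,H)$; its inverse is automatically a morphism, yielding the claimed isomorphism.
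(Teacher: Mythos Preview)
Your proposal is correct and follows essentially the same approach as the paper: fix an orthonormal basis, define the coordinate map on $c_c(C^1_0(\C M))$, verify it is an isometry (you go further and check complete isometry, which the paper defers to a subsequent remark), and then prove density of the image via truncation $s_m(x)=\sum_{i\le m} e_i\inn{e_i,s(x)}$. Your density argument is more detailed than the paper's ``not hard to see''; just be slightly careful that $\C K\cap U_\alpha$ need not be compact, so choose the finite chart cover so that $\C K$ is covered by compact sets each lying inside a chart domain before invoking precompactness of the partial-derivative images.
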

\begin{proof}
Remark first that $C^1_0(\C M,H)$ becomes a Banach space when equipped with the norm
\[
\|\cd\|_1 : s \mapsto \sup_{x \in \C M}\big( \inn{s(x),s(x)}_H + \inn{(ds)(x),(ds)(x)}_{H \ot T_x^*\C M} \big)^{1/2}.
\]

Let $\{e_k\}$ be an orthonormal basis for $H$. Define the map $\be : c_c\big( C_0^1(\C M) \big) \to C^1_0(\C M,H)$ by $\be(f)(x) := \sum_{n=1}^\infty e_k \cd f(k)(x)$, where $f(k) \in C^1_0(\C M)$ denotes the $k^{\T{th}}$ entry of the column $f \in c_c\big(C^1_0(\C M)\big)$, see the beginning of Section \ref{s:stamod}.

Notice then that
\[
\begin{split}
\|\be(f)\|_1
& =
\sup_{x \in \C M} \Big( \sum_{k=1}^\infty \binn{f(k)(x),f(k)(x)} +
\sum_{k=1}^\infty \binn{d(f(k))(x),d(f(k))(x)} \Big)^{1/2} \\
& = \sup_{x \in \C M}\Big( \|f(x)\|_{\cc}^2 + \|(df)(x)\|_{T_x^*\C M}^2 \Big)^{1/2}
= \|f\|_{C^1_0(\C M)^\infty},
\end{split}
\]
where $f \in c_c\big( C_0^1(\C M)\big)$ has been identified with the element $\lim_{n \to \infty} \al_n(f) \in M(C^1_0(\C M))$.

This implies that $\be$ induces an isometry
\[
\be : C_0^1(\C M)^\infty \to C^1_0(\C M,H).
\]
In particular, $\be$ is injective and has closed image. 

Let now $s \in C^1_0(\C M,H)$. It is then not hard to see that $s = \lim_{m \to \infty} s_m$ where $s_m(x) := \sum_{i=1}^m e_i \cd \inn{e_i,s(x)}$ for all $m \in \nn$. It follows that $\be$ has dense image. This shows that $\be$ is an isometric isomorphism of Banach spaces. Since $\be$ also respects the module structures over $C^1_0(\C M)$ this proves the proposition.
%
\end{proof}

\begin{remark}
It follows from Proposition \ref{p:starie}, Proposition \ref{p:infher}, and Proposition \ref{p:diffun} that $C^1_0(\C M,H)$ is a hermitian operator module over $C^1_0(\C M)$. The matrix norm $\|\cd\|_1 : M\big( C^1_0(\C M,H) \big) \to [0,\infty)$ is defined by
\[
\|s\|_1 := \sup_{x \in \C M} \Big( \|s(x)\|^2_H + \|(ds)(x)\|_{H \ot T_x^*\C M}^2 \Big)^{1/2}
\]
for all finite matrices $s \in M(C^1_0(\C M))$. The completely bounded hermitian form $\inn{\cd,\cd} : C^1_0(\C M,H) \ti C^1_0(\C M,H) \to C^1_0(\C M)$ is given by $\inn{s,t}(x) := \inn{s(x),t(x)}_H$. Equipped with this structure $C^1_0(\C M,H)$ becomes unitarily completely isometric to the standard hermitian module $C^1_0(\C M)^\infty$.
\end{remark}

\subsection{Operator $*$-modules}\label{s:opemod}

Let $A$ be an operator $*$-algebra and let $P : A^\infty \to A^\infty$ be a completely bounded projection, thus $P^2 = P = P^*$, where the adjoint is with respect to the canonical hermitian form $\inn{\cd,\cd}_{A^\infty} : A^\infty \ti A^\infty \to A$. See Proposition \ref{p:infher}.

Let $X := PA^\infty := \{x \in A^\infty \, | \, x = P(x)\}$. Notice that $X \su A^\infty$ is a closed submodule over $A$ and that $\inn{\cd,\cd}_{A^\infty}$ restricts to a pairing $\inn{\cd,\cd}_X : X \ti X \to A$ which satisfies the algebraic conditions in Definition \ref{d:hermod}. It follows that $X$ becomes a hermitian operator module over $A$ when equipped with the matrix norm $\|\cd\|_X : x \mapsto \|x\|_{A^\infty}$ for all $x \in M(X)$.

The next definition is a reformulation of \cite[Definition 3.4]{KaLe:SFU}.

\begin{dfn}\label{d:opemod}
An \emph{operator $*$-module} is a hermitian operator module $X$ over an operator $*$-algebra $A$ which is unitarily isomorphic to a direct summand $PA^\infty$ of the standard module $A^\infty$.

A morphism of operator $*$-modules $\al : X \to Y$ is a morphism of the underlying hermitian operator modules.

The category of operator $*$-modules over $A$ is denoted by $\G{Op^*Mod}_A$.
\end{dfn}

%

\subsubsection{Morphisms of the standard hermitian module over a complete manifold}\label{ss:comman}
Let $H$ and $G$ be separable Hilbert spaces of infinite dimension and let $\C M$ be a smooth Riemannian manifold of dimension $N \in \nn$.

Let $C^1_b(\C M,\sL(H,G))$ denote the vector space of $*$-strongly differentiable maps $\al : \C M \to \sL(H,G)$ with
\[
\|\al\|_1 := \sup_{x \in \C M} \| \pi(\al)(x) \|_\infty < \infty,
\]
where $\pi(\al)(x)$ is the bounded operator defined by
\[
\pi(\al)(x) := \ma{cc}{
\al(x) & 0 \\
(d\al)(x) & \al(x) \ot 1
}  : H \op (H \ot T_x^*\C M) \to G \op (G \ot T_x^*\C M),
\]
for all $x \in \C M$. The vector space $C^1_b(\C M,\sL(H,G))$ becomes a Banach space when equipped with the norm $\|\cd\|_1$. Notice that $\al^* \in C^1_b(\C M,\sL(G,H))$ for all $\al \in C^1_b(\C M,\sL(H,G))$ by an application of Lemma \ref{l:adjest}.
%

Define the map
\[
\Phi : C^1_b(\C M,\sL(H,G)) \to \T{Mor}\big( C^1_0(\C M,H),C^1_0(\C M,G) \big)
\]
by $\Phi(\al)(s)(x) := \al(x)(s(x))$ for $\al \in C^1_b(\C M,\sL(H,G))$, $s \in C^1_0(\C M,H)$, and $x \in \C M$. Remark that the morphisms on the right hand side are the morphisms between the objects $C^1_0(\C M,H)$ and $C^1_0(\C M,G)$ in the category of operator $*$-modules. See Definition \ref{d:opemod} and Proposition \ref{p:starie}.
%
%
%
%

\begin{prop}\label{p:commanI}
Suppose that $\C M$ is complete. Then the above map $\Phi$ is well-defined and bijective. Furthermore, $\Phi(\al)^* = \Phi(\al^*)$ and $\|\Phi(\al)\|_{\T{cb}} \leq \|\al\|_1 \leq 2 \cd \|\Phi(\al)\|_{\T{cb}}$ for all $\al \in C^1_b(\C M,\sL(H,G))$.
%
%
\end{prop}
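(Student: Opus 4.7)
The plan is to verify well-definedness of $\Phi$ together with the upper norm inequality and adjoint formula, then handle injectivity and surjectivity, and finally the lower norm inequality; the surjectivity and the lower norm estimate form the technical core, with completeness of $\C M$ entering at the very last step.

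For well-definedness and the upper bound, take $\al \in C^1_b(\C M, \sL(H,G))$ and $s \in C^1_0(\C M, H)$. The Leibniz rule for the strong derivative of the pointwise product $\al(y) s(y)$ gives
\[
d(\Phi(\al)(s))(x) = (d\al)(x)(s(x)) + (\al(x) \ot 1)(ds)(x),
\]
so the pair $(\Phi(\al)(s)(x), d(\Phi(\al)(s))(x))$ is exactly $\pi(\al)(x)$ applied to $(s(x), (ds)(x))$. Taking operator norms of $\pi(\al)(x)$ and supremum over $x$ yields $\|\Phi(\al)(s)\|_1 \leq \|\al\|_1 \cd \|s\|_1$, and $\Phi(\al)(s)$ vanishes at infinity because $s$ does and $\al$ is uniformly bounded. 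The identical computation goes through entrywise on finite matrices of sections, so $\|\Phi(\al)\|_{\T{cb}} \leq \|\al\|_1$. The formula $\Phi(\al)^* = \Phi(\al^*)$ follows pointwise from $\inn{\al(x)\xi, \eta}_G = \inn{\xi, \al(x)^* \eta}_H$ applied to the hermitian form $\inn{s,t}(x) = \inn{s(x),t(x)}$. Injectivity is immediate: if $\al(x)\xi \neq 0$, then $\Phi(\al)(\chi \xi)(x) = \al(x) \xi \neq 0$ for a compactly supported $\chi \in C^1_0(\C M)$ with $\chi(x) = 1$.

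For surjectivity, let $T : C^1_0(\C M, H) \to C^1_0(\C M, G)$ be a morphism of hermitian operator modules with adjoint $T^*$. The crucial observation is that $T(s)(x)$ depends only on the pointwise value $s(x)$: from the pointwise adjoint identity $\inn{T(s)(x), t(x)}_G = \inn{s(x), T^*(t)(x)}_H$, if $s(x) = 0$ then $T(s)(x) \perp t(x)$ for every $t \in C^1_0(\C M, G)$, and since such $t(x)$ span $G$ (take $t = \chi \eta$ with $\chi \in C^1_0(\C M)$ compactly supported and $\chi(x) = 1$), we conclude $T(s)(x) = 0$. Hence the prescription $\al(x) \xi := T(s_\xi)(x)$ for any $s_\xi \in C^1_0(\C M, H)$ with $s_\xi(x) = \xi$ is well-defined and linear, and $\Phi(\al) = T$ by construction. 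Strong differentiability of $\al$ is local: for a compactly supported $\chi$ identically $1$ on an open neighborhood $U$ of a given point, the identity $\al(y)\xi = T(\chi \xi)(y)$ for $y \in U$ realizes $y \mapsto \al(y)\xi$ as a $C^1$-function on $U$. Applying the same argument to $T^*$ yields a strongly differentiable $\beta : \C M \to \sL(G, H)$ representing $T^*$, and the adjoint identity gives $\beta(x) = \al(x)^*$, so $\al$ is $*$-strongly differentiable.

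For the lower bound $\|\al\|_1 \leq 2 \cd \|\Phi(\al)\|_{\T{cb}}$, which also certifies that $\al \in C^1_b$, fix $x \in \C M$ and $(\xi, \eta) \in H \op (H \ot T_x^* \C M)$. In normal coordinates $\phi$ at $x$, decompose $\eta = \sum_i \eta_i (d\phi_i)(x)$ and consider the test section
\[
s(y) := \chi(y) \xi + \chi(y) \sum_i \phi_i(y) \eta_i,
\]
where $\chi \in C^1_0(\C M)$ is a compactly supported bump with $\chi(x) = 1$ and $(d\chi)(x) = 0$. Then $s(x) = \xi$, $(ds)(x) = \eta$, and $\pi(\al)(x)(\xi, \eta) = (T(s)(x), d(T(s))(x))$, whose norm is at most $\|T(s)\|_1 \leq \|T\|_{\T{cb}} \cd \|s\|_1$. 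The main obstacle is controlling $\|s\|_1$ by $2(\|\xi\|^2 + \|\eta\|^2)^{1/2}$ uniformly in $x$: completeness of $\C M$ is essential here, since by the Hopf-Rinow theorem closed geodesic balls of arbitrary radius are compact, which allows $\chi$ to be supported in such large balls so that $\|\chi\|_1$ is arbitrarily close to $1$ and the derivative contribution from the $\eta$-part of $s$ stays under control. Splitting $s$ into its $\xi$- and $\eta$-summands and applying the triangle inequality together with $(a + b)^2 \leq 2(a^2 + b^2)$ then gives the required factor-$2$ estimate, completing the proof.
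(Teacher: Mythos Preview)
Your treatment of well-definedness, the upper bound $\|\Phi(\al)\|_{\T{cb}}\le\|\al\|_1$, the adjoint formula, and the construction of the pointwise map $\al(x)$ from a morphism $T$ all match the paper's approach and are fine.

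The gap is in your argument for the lower bound $\|\al\|_1\le 2\|\Phi(\al)\|_{\T{cb}}$. You try to probe an arbitrary vector $(\xi,\eta)\in H\oplus(H\otimes T_x^*\C M)$ with the test section
\[
s(y)=\chi(y)\,\xi+\chi(y)\sum_i\phi_i(y)\,\eta_i,
\]
and you assert that by taking $\chi$ supported in a large geodesic ball one can force $\|s\|_1\le 2\|(\xi,\eta)\|$. This does not work for two reasons. First, the normal coordinates $\phi_i$ at $x$ are only defined on the injectivity ball around $x$, whose radius may be arbitrarily small on a merely complete manifold; you cannot support $\chi$ in a large ball and still make sense of the $\eta$-part of $s$. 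Second, and more fundamentally, even where $\phi_i$ are defined the \emph{value} $\|s(y)\|_H$ contains the term $\chi(y)\sum_i\phi_i(y)\eta_i$, whose norm is of order $(\text{radius of }\T{supp}\,\chi)\cdot\|\eta\|$. Making $\|d\chi\|_\infty$ small forces this radius to be large, so $\sup_y\|s(y)\|$ blows up and there is no uniform bound $\|s\|_1\le C\|(\xi,\eta)\|$.

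The paper sidesteps this by \emph{not} probing the $\eta$-direction at all. It uses only the constant-vector test sections $\xi\cdot\sigma_i$, where $\{\sigma_i\}$ is a sequence of compactly supported smooth functions with $0\le\sigma_i\le 1$, $\|d\sigma_i\|_\infty\le 1/i$, and $\sigma_i\equiv 1$ on any given compact set for large $i$ (such a sequence exists precisely because $\C M$ is complete). These satisfy $\|\xi\cdot\sigma_i\|_1\le\|\xi\|_H(1+1/i^2)^{1/2}$, and letting $i\to\infty$ yields
\[
\Big\|\pi(\al)(x)\begin{pmatrix}\xi\\0\end{pmatrix}\Big\|\le\|T\|_{\T{cb}}\|\xi\|_H.
\]
The factor $2$ then comes from the lower-triangular block structure of $\pi(\al)(x)$: the first column is controlled by the displayed estimate, and the second column is $\begin{pmatrix}0\\\al(x)\otimes 1\end{pmatrix}$, whose norm $\|\al(x)\|_\infty$ is already dominated by the first-column bound. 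You should replace your test-section construction by this approximate-identity argument.
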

\begin{proof}
Let $\al : \C M \to \sL(H,G)$ be $*$-strongly differentiable with $\|\al\|_1 < \infty$. It is not hard to see that $\Phi(\al) : C^1_0(\C M,H) \to C^1_0(\C M,G)$ is a well-defined morphism of modules over $C^1_0(\C M)$. Furthermore, $\binn{\Phi(\al)(s),t} = \binn{s, \Phi(\al^*)(t)}$ for all $s \in C^1_0(\C M,H)$ and $t \in C^1_0(\C M,G)$.
%
%
%

To show that $\Phi(\al)$ is completely bounded, let $s \in C^1_0(\C M,H)$ and let $x \in \C M$. Remark that
\begin{equation}\label{eq:commanI1}
\begin{split}
\ma{c}{
\Phi(\al)(s)(x) \\
(d\Phi(\al)(s))(x)
} 
& = \ma{c}{
\al(x)(s(x)) \\
(d\al)(x)(s(x)) + (\al(x) \ot 1)(ds)(x)
} \\
& = \pi(\al)(x) \ma{c}{
s(x) \\
(ds)(x)
}.
\end{split}
\end{equation}

Recall that the Hilbert space direct sums $H \op H \ot T_x^*\C M$ and $G \op G \ot T_x^*\C M$ have canonical operator space structures. See the beginning of Section \ref{ss:diffun}. Furthermore, for any bounded operator $T : H \op (H \ot T_x^*\C M) \to G \op (G \ot T_x^*\C M)$ the completely bounded norm coincides with the operator norm, thus $\|T\|_\infty = \|T\|_{\T{cb}}$. See the discussion in \cite[Page 258]{Ble:AHM}.

It is thus a consequence of \eqref{eq:commanI1} that,
\[
\begin{split}
& \|\Phi(\al)(s)\|_1 
= \sup_{x \in \C M}\big(  
\|\Phi(\al)(s)(x)\|_G^2 + \|d\Phi(\al)(s)(x)\|_{G \ot T_x^*\C M}^2
\big)^{1/2} \\
& \q = \sup_{x \in \C M}\big\| \ma{c}{
\Phi(\al)(s)(x) \\
d\Phi(\al)(s)(x)
}\big\|_{G \op (G \ot T_x^*\C M)} \\
& \q \leq \sup_{x \in \C M} \Big( \|\pi(\al)(x)\|_\infty \cd \big\|\ma{c}{
s(x) \\
(ds)(x)}
 \big\|_{H \op (H \ot T_x^*\C M)} \Big)
\leq \|\al\|_1 \cd \|s\|_1
\end{split}
\]
for any \emph{finite matrix} $s \in M(C^1_0(\C M,H))$. It follows $\Phi(\al)$ is completely bounded with $\|\Phi(\al)\|_{\T{cb}} \leq \|\al\|_1$.

Since $\al^* \in C^1_b(\C M,\sL(G,H))$ the above argument implies that $\Phi(\al^*) = \Phi(\al)^*$ is completely bounded as well.

It is thus established that $\Phi : C^1_b(\C M,\sL(H,G)) \to \T{Mor}(C^1_0(\C M,H),C^1_0(\C M,G))$ is well-defined with $\Phi(\al^*) = \Phi(\al)^*$ and $\|\Phi(\al)\|_{\T{cb}} \leq \|\al\|_1$ for all $\al \in C^1_b(\C M,\sL(H,G))$.

Let now $\be : C^1_0(\C M,H) \to C^1_0(\C M,G)$ be a morphism of hermitian operator modules. Let $\Phi^{-1}(\be) : \C M \to \sL(H,G)$ be the unique map such that
\[
\Phi^{-1}(\be)(x)(\xi) = \be(s)(x)
\]
for all $x \in \C M$ $\xi \in H$ and $s \in C^1_0(\C M,H)$ with $s(x) = \xi$. It not hard to see that $\Phi^{-1}(\be)$ is $*$-strongly differentiable with $\Phi^{-1}(\be)^* = \Phi^{-1}(\be^*)$.
%

The completeness of $\C M$ implies the existence of a sequence $\{\si_i\}$ of smooth compactly supported functions such that
\begin{enumerate}
\item The image of $\si_i$ is contained in $[0,1]$ for all $i \in \nn$.
\item $\|d\si_i\|_\infty \leq 1/i$ for all $i \in \nn$.
\item For each compact set $\C K \su \C M$ there exists an index $i_0 \in \nn$ such that $\si_i(x) = 1$ for all $x \in \C K$ and all $i \geq i_0$.
\end{enumerate}
See for example \cite[Section 5]{Wol:ESD}, \cite[Page 117]{LaMi:SG}, \cite[Lemma 3.2.4]{Les:OCA}.

For each $\xi \in H$ and each $i \in \nn$, let $\xi \cd \si_i \in C_0^1(\C M,H)$ be defined by $(\xi \cd \si_i)(x) = \xi \cd \si_i(x)$ for all $x \in \C M$.

Let $\xi \in H$, $i \in \nn$ be fixed. Remark that
\[
\|\xi \cd \si_i\|_1
= \sup_{x \in \C M}\big( \|\xi \cd \si_i(x)\|_H^2 + \|\xi \ot (d\si_i)(x)\|^2_{H \ot T_x^*\C M}\big)^{1/2}
\leq \|\xi\|_H \cd (1 + 1/i^2)^{1/2}.
\]
Furthermore, it follows from \eqref{eq:commanI1} and the definition of $\Phi^{-1}(\be)$ that
\[
\pi(\Phi^{-1}(\be))(x)\ma{c}{ (\xi \cd \si_i)(x) \\
d(\xi \cd \si_i)(x)}
= \ma{c}{\be(\xi \cd \si_i)(x) \\ d\be(\xi \cd \si_i)(x) },
\]
for all $x \in \C M$. This entails that,
\begin{equation}\label{eq:commanI2}
\pi(\Phi^{-1}(\be))(x)\ma{c}{ (\xi \cd \si_i)(x) \\
d(\xi \cd \si_i)(x)}
\leq \|\be\|_{\T{cb}} \cd \|\xi \cd \si_i\|_1 \leq \|\xi\|_H \cd (1 + 1/i^2)^{1/2},
\end{equation}
for all $x \in \C M$.

As a consequence of \eqref{eq:commanI2} and the properties of the sequence $\{\si_i\}$ we get that,
\[
\pi(\Phi^{-1}(\be))(x)\ma{c}{\xi \\
0} \leq \|\be\|_{\T{cb}} \cd \|\xi\|_H,
\]
for all $x \in \C M$ and all $\xi \in H$. But this entails that 
\[
\|\Phi^{-1}(\be)\|_1 = \sup_{x \in \C M}\|\pi(\Phi^{-1}(\be))(x)\|_\infty \leq 2 \cd \|\be\|_{\T{cb}}.
\]
It is thus established that $\Phi^{-1} : \T{Mor}(C^1_0(\C M,H),C^1_0(\C M,G)) \to C^1_b(\C M,\sL(H,G))$ is well-defined with $\|\Phi^{-1}(\be)\|_1 \leq 2 \cd \|\be\|_{\T{cb}}$ for all morphisms $\be : C^1_0(\C M,H) \to C^1_0(\C M,G)$.

Since $(\Phi \ci \Phi^{-1})(\be) = \be$ and $(\Phi^{-1} \ci \Phi)(\al) = \al$, for all morphisms $\be : C^1_0(\C M,H) \to C^1_0(\C M,G)$ and all $*$-strongly differentiable maps $\al : \C M \to \sL(H,G)$ with $\|\al\|_1 < \infty$, this ends the proof of the proposition.
\end{proof}

\begin{remark}
It follows from the above proof that $\Phi$ is well-defined even when $\C M$ is not complete. Furthermore, $\Phi(\al)^* = \Phi(\al^*)$ and $\|\Phi(\al)\|_{\T{cb}} \leq \|\al\|_1$ for all $\al \in C^1_b(\C M,\sL(H,G))$. It is however unlikely that the inverse of $\Phi$ exists in such a general context due to the lack of a suitable approximate identity for $C^1_0(\C M)$.
\end{remark}

\section{Differentiable stabilization}\label{s:difstab}

Throughout this section $\pi : \sH \to \C M$ will be a Hilbert bundle of bounded geometry over the manifold $\C M$ of bounded geometry. The parameter $r \in (0,r_{\T{inf}})$ which satisfies the bounded geometry condition of Definition \ref{d:bougeo} will be fixed. Thus, for each $x \in \C M$, the notation $\psi_x : \pi^{-1}(U_x) \to U_x \ti H$ refers to the local trivialization over the normal chart $\phi_x : U_x \to \rr^N$. The model fiber $H$ is a separable Hilbert space (not necessarily of infinite dimension). See Definition \ref{d:hilbou}.

\begin{dfn}\label{d:boupar}
A \emph{bounded partition of unity} relative to the cover $\{U_x\}_{x \in \C M}$ is a countable partition of unity $\{\chi_i\}$ on $\C M$ such that
\begin{enumerate}
\item The squareroot $\sqrt{\chi_i}$ is compactly supported and smooth for each $i \in \nn$.
\item For each $i \in \nn$, there exists a point $x_i \in \C M$ such that $\T{supp}(\chi_i) \su U_{x_i}$.
\item The cover $\{\T{supp}(\chi_i)\}_{i \in \nn}$ of $\C M$ has finite multiplicity, where $\T{supp}(\chi_i) \su \C M$ denotes the support of $\chi_i$.
\item $C_\chi := \sup_{i \in \nn}\|d\sqrt{\chi_i}\|_\infty < \infty$.
\end{enumerate} 
\end{dfn}

A bounded partition of unity relative to $\{U_x\}_{x \in \C M}$ exists by Lemma \ref{l:finmul} and Lemma \ref{l:paruni}. We will often refer to the multiplicity of the cover $\{\T{supp}(\chi_i)\}$ as the multiplicity of the partition of unity $\{\chi_i\}$.
%

The notation $H^\infty:= \op_{i=1}^\infty H$ will refer to the infinite direct sum of the Hilbert space $H$ with itself. Thus $H^\infty$ is again a Hilbert space with inner product
\[
\inn{\sum_{i=1}^\infty e_i \cd \xi_i,\sum_{j=1}^\infty e_j \cd \eta_j}_{H^\infty} := \sum_{i=1}^\infty \inn{\xi_i,\eta_i},
\]
where $e_i \cd \xi_i \in H^\infty$ refers to the vector which has the vector $\xi_i \in H$ in position $i \in \nn$ and zeroes elsewhere.

%


Let $\{\chi_i\}$ be a bounded partition of unity relative to $\{U_x\}$ and let $\{x_i\}$ be a sequence of points with $\T{supp}(\chi_i) \su U_{x_i}$. Define the map
\[
\Phi : \Ga_0^1(\sH) \to C_0^1(\C M,H^\infty) \q
\Phi : s \mapsto \sum_{i=1}^\infty e_i \big( \psi_{x_i} \ci (s \cd \sqrt{\chi_i}) \big),
\]
where $e_i \big( \psi_{x_i} \ci (s \cd \sqrt{\chi_i}) \big)$ is the vector in $C_0^1(\C M,H^\infty)$ defined by 
\[
e_i \big( \psi_{x_i} \ci (s \cd \sqrt{\chi_i}) \big) : x \mapsto e_i \big( \psi_{x_i} \ci (s(x) \cd \sqrt{\chi_i}(x)) \q \forall x \in \C M.
\]

Recall from Proposition \ref{p:starie} that $C^1_0(\C M,H^\infty)$ is a hermitian operator module. The norm on $C^1_0(\C M,H^\infty)$ is given explicitly by
\[
\|\cd\|_1 : \sum_{i=1}^\infty e_i t_i \mapsto 
\sup_{x \in \C M}\big( \sum_{i=1}^\infty\|t_i(x)\|_H^2 + \sum_{i=1}^\infty \|(dt_i)(x)\|^2_{H \ot T_x^*\C M} \big).
\]

Recall also that $\Ga_0^1(\sH)$ denotes the module over $C^1_0(\C M)$ consisting of $C^1_0$-sections of $\sH$. See Definition \ref{d:HilBunII}.

\begin{prop}\label{p:isomet}
The above map $\Phi$ is a well-defined morphism of modules over $C^1_0(\C M)$.
%
\end{prop}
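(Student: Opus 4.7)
The plan is to verify that $\Phi(s)$ lies in $C^1_0(\C M,H^\infty)$ for every $s \in \Ga_0^1(\sH)$ and that $\Phi$ respects addition and right multiplication by $C^1_0(\C M)$. The $C^1_0(\C M)$-linearity is immediate from the definition: since each $\psi_{x_i}$ is fiberwise $\cc$-linear one has $\Phi(s \cd f)(x) = \sum_i e_i \cd \psi_{x_i}(s(x))\sqrt{\chi_i(x)} \cd f(x) = \Phi(s)(x) \cd f(x)$. Hence the entire content lies in the well-definedness.

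First I would organise the sum by setting $T_i(s)(x) := e_i \cd \psi_{x_i}(s(x))\sqrt{\chi_i(x)}$ for $x \in U_{x_i}$, extended by zero elsewhere. Since $\T{supp}(\sqrt{\chi_i})$ is a compact subset of $U_{x_i}$ and $\sqrt{\chi_i}$ is smooth, each $T_i(s)$ is a compactly supported $C^1$-map from $\C M$ to $H^\infty$. The finite multiplicity condition of Definition \ref{d:boupar} guarantees that at every $x \in \C M$ at most $K$ of the $T_i(s)(x)$ are nonzero, and the properness of the metric coming from bounded geometry upgrades finite multiplicity to local finiteness of the family $\{\T{supp}(\chi_i)\}$. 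In particular, in a neighborhood of every point only finitely many terms contribute, so $\Phi(s) = \sum_i T_i(s)$ is pointwise a finite sum landing in $H^\infty$ and is locally a finite sum of $C^1$-maps, hence a $C^1$-map from $\C M$ to $H^\infty$ globally.

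Two pointwise norm identities drive the remaining estimates. Since the transition maps $\tau_{x_i,x_j}$ are unitary, $\|\psi_{x_i}(s(z))\|_H = \|s(z)\|_{\sH_z}$ whenever $z \in U_{x_i}$, and therefore
\[
\|\Phi(s)(x)\|_{H^\infty}^2 = \sum_i \chi_i(x)\|s(x)\|_{\sH_x}^2 = \|s(x)\|_{\sH_x}^2.
\]
Applying the Leibniz rule to $T_i(s)$ and invoking the bound $C_\chi = \sup_i\|d\sqrt{\chi_i}\|_\infty < \infty$ together with the finite multiplicity yields
\[
\|d\Phi(s)(x)\|_{H^\infty \ot T_x^*\C M}^2 \leq 2\sum_i \chi_i(x)\|d(\psi_{x_i} \ci s)(x)\|^2 + 2 K C_\chi^2 \|s(x)\|_{\sH_x}^2,
\]
which is globally finite since $s \in \Ga_0^1(\sH)$ forces the quantity $\T{sup}_{x \in \C M}\T{sup}_{y \in U_x}\big(\|\psi_x(s(y))\|_H + \|d(\psi_x \ci s)(y)\|\big)$ to be finite.

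The vanishing at infinity of $\Phi(s)$ then follows from the same two inequalities. Given $\ep > 0$, choose the compact set $\C K \su \C M$ provided by Definition \ref{d:HilBunII} for the parameter $\ep / \sqrt{2(1 + K C_\chi^2)}$; restricted to $y \notin \C K$, the displayed estimates give $\|\Phi(s)(y)\|_{H^\infty}^2 + \|d\Phi(s)(y)\|^2 < \ep^2$. I expect the most delicate point to be organising the derivative bound so that the Leibniz cross-terms are uniformly controlled via the finite multiplicity of the partition; everything else reduces to bookkeeping with the bounded geometry data.
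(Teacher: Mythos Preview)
Your argument is correct and uses the same ingredients as the paper (unitarity of the trivializations, the Leibniz rule, finite multiplicity, and the bound $C_\chi$), but it is organized differently. The paper does not first show that the pointwise sum is a $C^1$-map and then verify the vanishing-at-infinity condition; instead it shows that the partial sums $\sum_{i=1}^m e_i\,(\psi_{x_i}\!\circ s)\sqrt{\chi_i}$ form a Cauchy sequence in the Banach space $C^1_0(\C M,H^\infty)$. The key step there is to choose, for a given compact $\C K$, an index $i_0$ with $\T{supp}(\chi_i)\subset \C M\setminus\C K$ for all $i\geq i_0$, so that the tail is uniformly controlled by the same two estimates you wrote down. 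Convergence in $C^1_0(\C M,H^\infty)$ then gives $\Phi(s)\in C^1_0(\C M,H^\infty)$ in one stroke, without separately checking differentiability and decay. Your route is a bit more elementary; the paper's is slightly slicker.

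Two small points. First, the implication ``properness $+$ finite multiplicity $\Rightarrow$ local finiteness'' is not a general fact; what makes it true here is the specific construction behind Lemmas~\ref{l:finmul} and~\ref{l:paruni} (disjoint half-radius balls and uniformly bounded diameters of the supports), and the paper itself freely uses local finiteness of $\{\chi_i\}$ as a standing convention for partitions of unity. You should cite that rather than derive it from properness alone. Second, the constant in your final step is off by one: the displayed inequalities give $\|\Phi(s)(y)\|^2+\|d\Phi(s)(y)\|^2\leq (3+2KC_\chi^2)\,\ep'^2$, so you need $\ep'=\ep/\sqrt{3+2KC_\chi^2}$ rather than $\ep/\sqrt{2(1+KC_\chi^2)}$. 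Neither point affects the validity of the argument.
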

\begin{proof}
Let $s \in \Ga_0^1(\sH)$ and let $\ep > 0$. Since $s$ vanishes at infinity there exists a compact set $\C K \su \C M$ such that
\[
\|s(x)\|_{\sH_x} + \|d(\psi_{x_i} \ci s)(x)\|_{H \ot T_x^*\C M} < \ep
\]
for all $x \in \C M\setminus \C K$ and all $x_i \in \C M$ with $x \in U_{x_i}$.

To ease the notation, let $s(i) := \psi_{x_i} \ci s : U_{x_i} \to H$ for each $i \in \nn$. 

Let $x \in \C M \setminus \C K$. Estimate as follows,
\[
\sum_{i=1}^\infty \|(s(i) \cd \sqrt{\chi_i})(x)\|_H^2
= \sum_{i=1}^{\infty} \|s(x)\|_{\sH_x}^2 \cd \chi_i(x)
\leq \ep^2 \sum_{i=1}^\infty \chi_i(x) = \ep^2.
\]

Let $C_\chi := \T{sup}_{i \in \nn} \|d\sqrt{\chi_i}\| < \infty$ and let $K \in \nn$ be the multiplicity of the cover $\{\T{supp}(\chi_i)\}$.

Let $x \in \C M \sem \C K$. Compute as follows,
\begin{equation}\label{eq:difstabI}
\begin{split}
& \sum_{i=1}^{\infty}\|d(s(i) \cd \sqrt{\chi_i})(x)\|^2_{H \ot T_x^*\C M}
\leq K \cd \sup_{i \in \nn} \| d(s(i)\cd \sqrt{\chi_i} )(x)\|^2_{H \ot T_x^*\C M} \\
& \q \leq K \cd \sup_{i \in \nn} \big( \|d(s(i))(x) \cd \sqrt{\chi_i}(x)\|_{H \ot T_x^*\C M} + \|s(i)(x) \ot d\sqrt{\chi_i}(x)\|_{H \ot T_x^*\C M} \big)^2 \\
& \q \leq K \cd \ep^2 \cd (1 + C_\chi)^2.
\end{split}
\end{equation}

Choose an $i_0 \in \nn$ such that $\T{supp}(\chi_i) \su \C M\setminus \C K$ for all $i \geq i_0$. The two computations above then imply that
\[
\begin{split}
& \big\|\sum_{i = i_0}^{j_0} e_i (s(i) \cd \sqrt{\chi_i}) \big\|_1 \\
& \q = \sup_{x \in \C M \sem \C K}\big( \sum_{i=i_0}^{j_0} \|(s(i) \cd \sqrt{\chi_i})(x)\|_H^2
+ \sum_{i=i_0}^{j_0} \|d(s(i) \cd \sqrt{\chi_i})(x)\|_{H \ot T_x^*\C M}^2 \big)^{1/2} \\
& \q \leq \big( \ep^2 + K \cd \ep^2 \cd (1 + C_\chi)^2 \big)^{1/2}
%
\end{split}
\]
for all $j_0 \geq i_0$.

Since the constants $K > 0$ and $C_\chi > 0$ are independent of $\ep > 0$, this shows that the sequence $\{\sum_{i=1}^m e_i \cd (s(i) \cd \sqrt{\chi_i})\}_{m=1}^\infty$ is Cauchy in $C_0^1(\C M,H^\infty)$. The limit $\Phi(s) = \sum_{i=1}^\infty e_i \cd (s(i) \cd \sqrt{\chi_i})$ is therefore well-defined.

It is easily verified that $\Phi$ is a morphism of modules over $C^1_0(\C M)$.

This proves the proposition.
\end{proof}

Define the map 
\[
\Psi : C_0^1(\C M,H^\infty) \to \Ga_0^1(\sH) \q \Psi : \sum_{i=1}^\infty e_i \cd t_i \mapsto \sum_{i=1}^\infty \psi_{x_i}^{-1} \ci (t_i \cd \sqrt{\chi_i}),
\]
where the inclusion $\Ga_c^1(\sH|_{U_{x_i}}) \su \Ga_0^1(\sH)$ given by extension with zero has been suppressed. Remark that the sum in the definition of $\Psi$ makes sense at each point since the partition of unity $\{\chi_i\}$ is locally finite.

Recall that $\tau_{x,y} := \psi_x \ci \psi_y^{-1} : U_x \cap U_y \to \sL(H)$ denotes a transition function of the Hilbert bundle $\sH$ of bounded geometry. By definition these transition functions take values in the group of unitaries and they are strongly differentiable with
\[
C_\tau := \sup_{x,y \in \C M}\|d\tau_{x,y}\|_\infty < \infty.
\]

\begin{prop}\label{p:isosta}
The above map $\Psi$ is a well-defined morphism of modules over $C^1_0(\C M)$. 
%
%
\end{prop}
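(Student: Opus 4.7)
The plan is to verify four things in sequence: (i) the sum defining $\Psi(t)$ makes sense pointwise and yields a continuous section; (ii) for each $x \in \C M$ the composition $\psi_x \ci \Psi(t)$ is $C^1$ on $U_x$; (iii) the resulting section vanishes at infinity; (iv) $\Psi$ respects the module action of $C_0^1(\C M)$. The argument closely mirrors the proof of Proposition \ref{p:isomet} but run in the opposite direction, with the transition functions $\tau_{x,x_i} = \psi_x \ci \psi_{x_i}^{-1}$ replacing the trivializations $\psi_{x_i}$ in the estimates.

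For (i) and (ii), fix $y \in \C M$. By finite multiplicity of $\{\T{supp}(\chi_i)\}_{i \in \nn}$, there is a neighbourhood $W$ of $y$ meeting only finitely many $\T{supp}(\chi_i)$, so the sum defining $\Psi(t)|_W$ has only finitely many non-zero terms and belongs to $\Ga^1(\sH|_W)$. Thus $\Psi(t)$ is a well-defined $C^1$-section, and for any $x \in \C M$ and $y \in U_x$ the identity
\[
(\psi_x \ci \Psi(t))(y) = \sum_i \tau_{x,x_i}(y) \cd t_i(y) \cd \sqrt{\chi_i}(y),
\]
with locally finite sum, exhibits $\psi_x \ci \Psi(t)$ as $C^1$ on $U_x$.

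For (iii), this is the main technical point. Let $K$ denote the multiplicity of $\{\T{supp}(\chi_i)\}$, let $C_\chi := \sup_i \|d\sqrt{\chi_i}\|_\infty$, and let $C_\tau := \sup_{x,y} \|d\tau_{x,y}\|_\infty$. At any $y \in U_x$ Cauchy-Schwarz with at most $K$ non-zero summands gives
\[
\|(\psi_x \ci \Psi(t))(y)\|_H^2 \leq K \sum_i \chi_i(y) \|t_i(y)\|_H^2 \leq K \sum_i \|t_i(y)\|_H^2,
\]
using that $\tau_{x,x_i}(y)$ is unitary. Applying the Leibnitz rule to
$\tau_{x,x_i}(y) \cd t_i(y) \cd \sqrt{\chi_i}(y)$ and invoking the same Cauchy-Schwarz trick together with the uniform bounds $C_\tau$ and $C_\chi$, one obtains an estimate of the form
\[
\|d(\psi_x \ci \Psi(t))(y)\|_{H \ot T_y^*\C M}^2 \leq 3K \bigl(C_\tau^2 + 1 + C_\chi^2 \bigr) \sum_i \bigl( \|t_i(y)\|_H^2 + \|(dt_i)(y)\|_{H \ot T_y^*\C M}^2 \bigr),
\]
analogous to \eqref{eq:difstabI}. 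Since $t = \sum_i e_i \cd t_i \in C_0^1(\C M,H^\infty)$, given any $\ep > 0$ there exists a compact set $\C K \su \C M$ outside of which the right hand side of the combined estimate is bounded by $\ep^2$ (the constants $K$, $C_\tau$, $C_\chi$ being independent of $\ep$). Taking the supremum over $x$ and over $y \in (\C M \sem \C K) \cap U_x$ yields the condition of Definition \ref{d:HilBunII}, so $\Psi(t) \in \Ga_0^1(\sH)$.

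Finally for (iv), the $C_0^1(\C M)$-action commutes with each $\psi_{x_i}^{-1}$ and with pointwise multiplication by $\sqrt{\chi_i}$, so $\Psi(t \cd f) = \Psi(t) \cd f$ for every $f \in C_0^1(\C M)$. The main obstacle is the bookkeeping in the derivative estimate in step (iii), where one must control the Leibnitz expansion of the triple product $\tau_{x,x_i} \cd t_i \cd \sqrt{\chi_i}$ uniformly in both $x$ and $i$; this is where the bounded geometry hypothesis (finite multiplicity plus the uniform bound on $d\tau_{x,y}$) and the choice of a bounded partition of unity (item $(4)$ of Definition \ref{d:boupar}) are all essential.
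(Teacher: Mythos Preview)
Your proposal is correct and follows essentially the same route as the paper: local finiteness gives the $C^1$ property, the Leibnitz expansion of $\tau_{x,x_i}\cd t_i \cd \sqrt{\chi_i}$ together with the uniform bounds $C_\tau$, $C_\chi$ and finite multiplicity $K$ gives the vanishing-at-infinity estimate, and the module property is immediate. The only difference is bookkeeping: the paper bounds each summand by $\sup_i(\|t_i(y)\|_H + \|(dt_i)(y)\|)$ and then multiplies by $K$, whereas you use Cauchy--Schwarz to bound directly by the $\ell^2$-sum $\sum_i(\|t_i(y)\|_H^2 + \|(dt_i)(y)\|^2)$, which plugs straight into the $C_0^1(\C M,H^\infty)$-norm of $t$; your version is arguably cleaner for that reason, but the two arguments are interchangeable.
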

\begin{proof}
Let $t = \sum_{i=1}^\infty e_i t_i \in C_0^1(\C M,H^\infty)$. 

Let $x \in \C M$. For each $z \in U_x$ we have that
\[
\big( \psi_x \ci \Psi(t) \big)(z) = \sum_{i=1}^\infty \tau_{x,x_i}(z)(t_i \cd \sqrt{\chi_i})(z).
\]
Since the partition of unity $\{\chi_i\}$ is locally finite this shows that the section $\Psi(t) : \C M \to \sH$ is continuously differentiable.

To see that $\Psi(t)$ vanishes at infinity let $\ep > 0$. Since $t \in C_0^1(\C M,H^\infty)$ there exists a compact set $\C K \su \C M$ such that
\[
\sup_{x \in \C M \sem \C K}\Big( \sum_{i=1}^\infty \|t_i(x)\|_H^2 + 
\sum_{i=1}^\infty \|(dt_i)(x)\|_{H \ot T_x^*\C M} \Big)^{1/2} < \ep.
\]
In particular, we have that
\[
\sup_{x \in \C M}\|t_i(x)\|_H < \ep \, \, \T{ and } \, \, \sup_{x \in \C M}\|(dt_i)(x)\|_{H \ot T_x^*\C M} < \ep,
\]
for all $i \in \nn$.
%

Let $x \in \C M$ and suppose that $z \in (\C M \sem \C K) \cap U_x$. Compute as follows,
\begin{equation}\label{eq:difstabII}
\big\|\sum_{i=1}^{\infty} 
\tau_{x,x_i}(z)(t_i \cd \sqrt{\chi_i})(z) \big\|_H
\leq \sum_{i=1}^\infty \|(t_i \cd \sqrt{\chi_i})(x)\|_H
< \sum_{i=1}^\infty \ep \cd \sqrt{\chi_i}(x)
= \ep,
\end{equation}
where $K \in \nn$ denotes the multiplicity of the cover $\{\T{supp}(\chi_i)\}$. 

Let $C_\chi := \T{sup}_{i \in \nn} \|d\sqrt{\chi_i}\|_\infty < \infty$ and $C_\tau := \T{sup}_{v,w \in \C M}\|d(\tau_{v,w})\| < \infty$. Use the Leibnitz rule and the estimate in \eqref{eq:difstabI} to compute as follows,
\begin{equation}\label{eq:difstabIII}
\begin{split}
& \big\| \sum_{i=1}^\infty d\big(\tau_{x,x_i} (t_i \cd \sqrt{\chi_i})\big)(z) \big\|_{H \ot T_z^*\C M}
\leq
K \cd \sup_{i \in \nn} \big\| d\big(\tau_{x,x_i} (t_i \cd \sqrt{\chi_i})\big)(z) \big\|_{H \ot T_z^*\C M} \\
& \q \leq K \cd \sup_{i \in \nn}
\big\|d(\tau_{x,x_i})(z) (t_i \cd \sqrt{\chi_i})(z)
+ (\tau_{x,x_i}(z) \ot 1)d(t_i \cd \sqrt{\chi_i})(z)
\big\|_{H \ot T_z^*\C M} \\
& \q < K \cd \sup_{i \in \nn}\big( C_\tau \cd \ep 
+ \|d(t_i \cd \sqrt{\chi_i})(z)\|_{H \ot T_z^*\C M} \big) 
\leq K \cd \ep \cd ( C_\tau + 1 + C_\chi),
\end{split}
\end{equation}
where as above $x \in \C M$ and $z \in (\C M\sem \C K) \cap U_x$.

It is now a consequence of \eqref{eq:difstabII} and \eqref{eq:difstabIII} that the continuously differentiable section $\Psi(t) : \C M \to \sH$ vanishes at infinity. Indeed, we have that
\[
\sup_{z \in (\C M\sem \C K) \cap U_x}\big( \|\psi_x \ci \Psi(t)(z)\|_H + \|d(\psi_x \ci \Psi(t))(z)\|_{H \ot T_x^*\C M} \big)
< \ep \cd \big( 1 + K \cd (C_\tau + 1 + C_\chi) \big),
\]
for all $x \in \C M$, where the constants $C_\tau, C_\chi > 0$ and $K \in \nn$ are independent of $\ep >0$.

It has thus been verified that $\Psi : C_0^1(\C M,H^\infty) \to \Ga_0^1(\sH)$ is well-defined. Since it obviously respects the module structures over $C^1_0(\C M)$ this ends the proof of the proposition.
\end{proof}


\begin{thm}\label{t:difstab}
Let $\pi : \sH \to \C M$ be a Hilbert bundle of bounded geometry. Then the module of $C^1_0$-sections $\Ga_0^1(\sH)$ is an operator $*$-module over $C^1_0(\C M)$.
\end{thm}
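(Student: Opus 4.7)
The plan is to realise $\Ga_0^1(\sH)$ as a direct summand of the standard hermitian module over $C^1_0(\C M)$. By Proposition~\ref{p:starie} we identify this standard module with $C^1_0(\C M, H^\infty)$, noting that $H^\infty$ is separable of infinite dimension regardless of whether $H$ is. Fix a bounded partition of unity $\{\chi_i\}$ relative to $\{U_x\}$ with points $x_i \in \C M$ satisfying $\T{supp}(\chi_i) \su U_{x_i}$, as provided by Lemmas~\ref{l:finmul} and~\ref{l:paruni}. Propositions~\ref{p:isomet} and~\ref{p:isosta} then supply the module homomorphisms
\[
\Phi : \Ga_0^1(\sH) \to C^1_0(\C M, H^\infty) \q \Psi : C^1_0(\C M, H^\infty) \to \Ga_0^1(\sH).
\]

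First, I would verify $\Psi \ci \Phi = 1_{\Ga_0^1(\sH)}$ by direct computation: for $s \in \Ga_0^1(\sH)$,
\[
(\Psi \ci \Phi)(s) = \sum_i \psi_{x_i}^{-1} \ci \big(\psi_{x_i} \ci (s \cd \sqrt{\chi_i}) \cd \sqrt{\chi_i}\big) = \sum_i s \cd \chi_i = s,
\]
which uses nothing but $\sum_i \chi_i = 1$. Setting $P := \Phi \ci \Psi$ yields an idempotent on $C^1_0(\C M, H^\infty)$. With the transition maps $\tau_{x_i,x_j} := \psi_{x_i} \ci \psi_{x_j}^{-1}$, an unwinding of definitions shows that $P$ acts on $t = \sum_j e_j t_j$ through the locally finite pointwise matrix
\[
p_{ij}(x) := \tau_{x_i,x_j}(x)\sqrt{\chi_i(x)\chi_j(x)}, \q (P(t))_i(x) = \sum_j p_{ij}(x)\, t_j(x).
\]

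Next I would show that $P$ is self-adjoint for the canonical hermitian form on $C^1_0(\C M, H^\infty)$. This reduces to the pointwise identity $p_{ij}(x)^* = p_{ji}(x)$, which is immediate from $\tau_{x_i,x_j}(x)^* = \tau_{x_j,x_i}(x)$, and this in turn is guaranteed by the unitarity of the transitions in Definition~\ref{d:hilbou}(1) (cf.\ Remark~\ref{r:idemor}). Then I would check that both $\Phi$ and $\Psi$ are completely bounded: the estimates carried out in the proofs of Propositions~\ref{p:isomet} and~\ref{p:isosta} depend only on the multiplicity $K$ of $\{\T{supp}(\chi_i)\}$, on $C_\chi = \sup_i \|d\sqrt{\chi_i}\|_\infty$, and on $C_\tau = \sup_{x,y}\|d\tau_{x,y}\|_\infty$, none of which depend on a matrix size; consequently the same inequalities apply verbatim on each $M_n$, yielding $\|\Phi\|_{\T{cb}}, \|\Psi\|_{\T{cb}} < \infty$. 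Hence $P$ is a completely bounded self-adjoint projection on the standard module.

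Finally, I would transport the operator space, module, and hermitian structures from $P \cd C^1_0(\C M, H^\infty)$ across to $\Ga_0^1(\sH)$ via $\Phi$. Since $P \ci \Phi = \Phi \ci \Psi \ci \Phi = \Phi$ and each $u \in \T{Im}(P)$ equals $\Phi(\Psi(u))$, we have $\T{Im}(\Phi) = \T{Im}(P)$, so $\Phi$ furnishes a unitary isomorphism $\Ga_0^1(\sH) \cong P \cd C^1_0(\C M, H^\infty)$ of hermitian operator modules. A short direct calculation, again using $\sum_i \chi_i = 1$ and unitarity of the $\tau_{x_i,x_j}$, confirms that the induced hermitian form is the intrinsic pointwise form $\inn{s,t}(x) = \inn{s(x),t(x)}_{\sH_x}$ from Remark~\ref{r:her}. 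This exhibits $\Ga_0^1(\sH)$ as an operator $*$-module in the sense of Definition~\ref{d:opemod}. I expect the principal obstacle to be the self-adjointness of $P$; once it is pinned down via the unitarity of transitions, every other ingredient is a straightforward matrix-level extension of estimates already established.
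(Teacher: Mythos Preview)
Your approach matches the paper's: both realise $\Ga_0^1(\sH)$ as $P\,C^1_0(\C M,H^\infty)$ for $P = \Phi\ci\Psi$, verify $\Psi\ci\Phi = 1$ by the same one-line computation, identify the block-matrix entries $p_{ij}(x)=\tau_{x_i,x_j}(x)\sqrt{\chi_i\chi_j}(x)$, and deduce selfadjointness from $\tau_{x_i,x_j}^*=\tau_{x_j,x_i}$.

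There is one wrinkle in your argument for complete boundedness. You speak of $\|\Phi\|_{\T{cb}}$ and $\|\Psi\|_{\T{cb}}$, but $\Ga_0^1(\sH)$ carries no operator-space norm at this stage of the argument---that norm is precisely what you are about to \emph{define} by transport along $\Phi$. Moreover, Propositions~\ref{p:isomet} and~\ref{p:isosta} as written only establish well-definedness (the images vanish at infinity), not operator-norm bounds; extracting cb bounds from them would require reworking those estimates. The paper sidesteps this by working with $P$ directly: it assembles the entries $p_{ij}$ into a single map $\sP : \C M \to \sL(H^\infty)$, shows $\sP$ is strongly differentiable with $\sup_x\|(d\sP)(x)\|_\infty\leq K^{3/2}(C_\tau+C_\chi)$ by an explicit estimate, and then invokes Proposition~\ref{p:commanI} to conclude that $P$ is a completely bounded selfadjoint morphism. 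Your argument is easily repaired along the same line---bound $\sup_x\|(d\sP)(x)\|_\infty$ once, rather than trying to factor through cb norms of $\Phi$ and $\Psi$ that are not yet meaningful.
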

\begin{proof}
It follows from Proposition \ref{p:isomet} and Proposition \ref{p:isosta} that the maps $\Phi : \Ga_0^1(\sH) \to C_0^1(\C M,H^\infty)$ and $\Psi : C_0^1(\C M,H^\infty) \to \Ga_0^1(\sH)$ are morphisms of modules over $C^1_0(\C M)$. 
Let now $s \in \Ga_0^1(\sH)$. Then
\[
(\Psi \ci \Phi)(s) = \Psi\big( \sum_{i=1}^\infty e_i (\psi_{x_i} \ci (s \cd \sqrt{\chi_i}) )\big) = \sum_{i=1}^\infty (\psi_{x_i}^{-1} \ci \psi_{x_i} \ci s) \cd \chi_i = s \cd \sum_{i=1}^\infty \chi_i = s.
\]
This shows that $\Psi \ci \Phi = 1_{\Ga_0^1(\sH)}$.

Let $P : C^1_0(\C M,H^\infty) \to C^1_0(\C M,H^\infty)$ be the composition $P := \Phi \ci \Psi$. It follows from the above that $P^2 = P$ and that $\Ga_0^1(\sH)$ is isomorphic as a module over $C^1_0(\C M)$ to the submodule $P C^1_0(\C M,H^\infty) \su C^1_0(\C M,H^\infty)$. It is therefore sufficient to prove that $P$ is completely bounded and selfadjoint.

Define the map $\sP : \C M \to \sL(H^\infty)$ by the formula
\[
\sP(x) : e_j \cd \xi \mapsto \sum_{i=1}^\infty e_i \cd \tau_{x_i,x_j}(x)(\xi) \cd
\sqrt{\chi_i \cd \chi_j}(x), 
\]
for all $x \in \C M$. Thus $\sP(x) \in \sL(H^\infty)$ is represented by the infinite block matrix where each block is an element of $\sL(H)$ and the block in position $(i,j)$ is $\tau_{x_i,x_j}(x) \cd \sqrt{\chi_i \cd \chi_j}(x)$. It follows that $\sP(x)$ is selfadjoint for all $x \in \C M$. Indeed, simply note that
\[
\big( \tau_{x_i,x_j}(x) \cd \sqrt{\chi_i \cd \chi_j}(x) \big)^*
= \tau_{x_j,x_i}(x) \cd \sqrt{\chi_i \cd \chi_j}(x)
\]
for all $i,j \in \nn$.

It is clear that $\sP : \C M \to \sL(H^\infty)$ corresponds to $P : C^1_0(\C M,H^\infty) \to C^1_0(\C M,H^\infty)$ under the bijective map of Proposition \ref{p:commanI}. It is therefore enough to show that $\sP$ is strongly differentiable with $\sup_{x \in \C M}\|(d\sP)(x)\|_\infty < \infty$.

The strong differentiability of $\sP : \C M \to \sL(H^\infty)$ follows from the strong differentiability of each transition map $\tau_{x_i,x_j} : U_{x_i} \cap U_{x_j} \to \sL(H)$ since the partition of unity $\{\chi_i\}$ is locally finite.

Let $C_\chi := \sup_{i,j \in \nn}\|d(\sqrt{\chi_i \cd \chi_j})\|_\infty < \infty$, let $C_\tau := \sup_{i,j \in \nn}\|d(\tau_{x_i,x_j})\|_\infty < \infty$, and let $K \in \nn$ be the multiplicity of the cover $\{\T{supp}(\chi_i)\}$. Let $x \in \C M$, let $\xi = \sum_{j =1}^\infty e_j \cd \xi_j \in H^\infty$, and compute as follows,
\[
\begin{split}
& \|(d\sP)(x)(\xi)\|_{H^\infty \ot T_x^*\C M}^2
= \sum_{i=1}^\infty \|\sum_{j=1}^\infty d(\tau_{x_i,x_j} \cd \sqrt{\chi_i \cd \chi_j})(x)(\xi_j)\|^2_{H \ot T_x^*\C M} \\
& \q \leq K^3 \cd \sup_{i,j \in \nn} \| d(\tau_{x_i,x_j} \cd \sqrt{\chi_i \cd \chi_j})(x)(\xi_j)\|^2_{H \ot T_x^*\C M} \\
& \q \leq K^3 \cd \sup_{i,j \in \nn}
\big( \|d(\tau_{x_i,x_j})(x)\|_\infty \cd \|\xi_j\|_H
+ \|\tau_{x_i,x_j}(x)(\xi_j) \ot (d\sqrt{\chi_i \cd \chi_j})(x)\|_{H \ot T_x^*\C M} \big)^2 \\
& \q \leq K^3 \cd ( C_\tau + C_\chi)^2 \cd \sup_{j \in \nn}\|\xi_j\|_H^2
\leq K^3 \cd ( C_\tau + C_\chi)^2 \cd \|\xi\|_{H^\infty}^2
.
\end{split}
\]
This shows that $\|(d\sP)(x)\|_\infty \leq \sqrt{K^3} \cd (C_\tau + C_\chi)$ for all $x \in \C M$ and the theorem is proved.


\end{proof}

\begin{remark}\label{r:openor}
The operator $*$-module norm on $\Ga_0^1(\sH)$ is given by
\[
\begin{split}
& \|\cd\|_1 : s \mapsto \|\Phi(s)\|_1 \\
& \, \, = 
\sup_{x \in \C M} \Big( \sum_{i=1}^\infty \|(\psi_{x_i} \ci s)(x)\cd \sqrt{\chi_i}(x)\|_H^2 + \sum_{i=1}^\infty\|d\big((\psi_{x_i} \ci s) \cd \sqrt{\chi_i}\big)(x)\|_{H\ot T_x^*\C M}^2 \Big)^{1/2} \\
& \, \, =
\sup_{x \in \C M} \Big( \|s(x)\|_{\sH_x}^2 + \sum_{i=1}^\infty\|d\big((\psi_{x_i} \ci s) \cd \sqrt{\chi_i}\big)(x)\|_{H\ot T_x^*\C M}^2 \Big)^{1/2}.
\end{split}
\]
for all $s \in M\big( \Ga_0^1(\sH) \big)$. It is important to notice that this norm depends on the choice of bounded partition of unity $\{\chi_i\}$ and the local trivializations $\{\psi_{x_i}\}$. As we shall see later on this dependency is however only up to a canonical unitary isomorphism. As expected the completely bounded hermitian form on $\Ga_0^1(\sH)$ is given by $\inn{s,t} : x \mapsto \inn{s(x),t(x)}_{\sH_x}$.
\end{remark}

\section{Image bundles of bounded geometry}\label{s:imabun}

Let $\C M$ be a smooth Riemannian manifold of dimension $N \in \nn$ and let $H$ be a separable Hilbert space of infinite dimension.

Recall from Section \ref{ss:comman} that $C^1_b(\C M,\sL(H))$ consists of the $*$-strongly differentiable maps $\al : \C M \to \sL(H)$ with
\[
\|\al\|_1 := \sup_{x \in \C M}\|\pi(\al)(x)\|_\infty < \infty,
\]
where $\pi(\al)(x)$ is the bounded operator defined by
\[
\pi(\al)(x) := \ma{cc}{
\al(x) & 0 \\
(d\al)(x) & \al(x) \ot 1
}  : H \op (H \ot T_x^*\C M) \to H \op (H \ot T_x^*\C M),
\]
for all $x \in \C M$. Remark that $C^1_b(\C M,\sL(H))$ becomes a Banach algebra when equipped with the norm $\|\cd\|_1$ and the pointwise algebraic operations. Indeed, the submultiplicativity of the norm $\|\cd\|_1$ follows since
\begin{equation}\label{eq:difsqrI}
\pi(\al \cd \be)(x) = \pi(\al)(x) \ci \pi(\be)(x),
\end{equation}
for all $\al,\be \in C^1_b(\C M,\sL(H))$ and all $x \in \C M$. Notice also that
\[
\frac{1}{2} \cd \big( \|\al(x)\|_\infty + \|(d\al)(x)\|_\infty \big) \leq \|\pi(\al)(x)\|_\infty \leq \|\al(x)\|_\infty + \|(d\al)(x)\|_\infty,
\]
for all $\al \in C^1_b(\C M,\sL(H))$ and all $x \in \C M$.
%
%

\begin{lemma}\label{l:difsqr}
Let $U \su \C M$ be an open set and let $\La \in C^1_b(U,\sL(H))$. Suppose that $\La(x) \in \sL(H)$ is positive and invertible for all $x \in U$ and that
\[
\T{sup}_{x \in U} \|\La^{-1}(x)\|_\infty < \infty.
\]
Then the map $\La^{-1/2} : x \mapsto \La(x)^{-1/2}$ is strongly differentiable with
\[
\sup_{x \in U}\|d(\La^{-1/2})(x)\|_\infty \leq
\sup_{x \in U} \|(d\La)(x)\|_\infty \cd \sup_{x \in U} \|\La^{-3/2}(x)\|_\infty.
\]
%
%
\end{lemma}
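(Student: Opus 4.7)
The plan is to represent $\La(x)^{-1/2}$ by the classical integral formula
\[
\La(x)^{-1/2} = \frac{1}{\pi}\int_0^\infty \la^{-1/2}(\La(x) + \la)^{-1}\,d\la,
\]
valid (in the strong sense) for any positive invertible operator on $H$, and then to differentiate under the integral sign. The whole argument is driven by the purely algebraic resolvent identity
\[
R_\la(x) - R_\la(y) = R_\la(x)\bigl(\La(y) - \La(x)\bigr)R_\la(y), \qquad R_\la(x) := (\La(x)+\la)^{-1}.
\]

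First, I would verify that for each $\xi \in H$ the function $\la \mapsto \la^{-1/2} R_\la(x)\xi$ is Bochner integrable on $(0,\infty)$, uniformly in $x \in U$. This is immediate from the two-sided estimate $\|R_\la(x)\|_\infty \leq \min\{\la^{-1}, \|\La^{-1}(x)\|_\infty\}$ together with the hypothesis $\sup_U \|\La^{-1}\|_\infty < \infty$, which provides a $\la$-integrable envelope on $(0,\infty)$.

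Next -- and this is the heart of the argument -- I would justify differentiating under the integral sign. The candidate for the strong derivative is
\[
d(\La^{-1/2})(x) = -\frac{1}{\pi}\int_0^\infty \la^{-1/2} R_\la(x)\,(d\La)(x)\, R_\la(x)\,d\la,
\]
where the outer resolvent is interpreted as $R_\la(x)\ot 1$ on $H \ot T_x^*\C M$. Fix $\xi \in H$ and a direction $e_i$ in a normal chart. Applying the resolvent identity to the difference quotient of $\La(x+he_i)^{-1/2}\xi$ shows the integrand converges pointwise in $\la$ to the prescribed limit -- by strong differentiability of $\La$ applied to the fixed vector $R_\la(x)\xi$, together with strong continuity of $R_\la$ (itself a consequence of the same identity). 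The $h$-uniform domination comes from the mean-value estimate $\|h^{-1}(\La(x+he_i)-\La(x))\eta\|_H \leq \sup_U\|d\La\|_\infty \cdot \|\eta\|_H$ applied with $\eta = R_\la(x)\xi$, combined with the integrable envelope of step one. Dominated convergence then yields the interchange.

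Finally, the operator norm estimate is obtained by spectral calculus. Since $\La(x)$ is positive with lower spectral bound $\mu_x := \|\La^{-1}(x)\|_\infty^{-1}$, one has $\|R_\la(x)\|_\infty \leq (\mu_x + \la)^{-1}$, so
\[
\|d(\La^{-1/2})(x)\|_\infty \leq \frac{\|(d\La)(x)\|_\infty}{\pi}\int_0^\infty \la^{-1/2}(\mu_x + \la)^{-2}\,d\la.
\]
The substitution $\la = \mu_x u$ reduces the integral to $\mu_x^{-3/2} B(1/2, 3/2) = \frac{\pi}{2}\mu_x^{-3/2}$, and since $\mu_x^{-3/2} = \|\La^{-3/2}(x)\|_\infty$ I obtain the claimed inequality (in fact with an extra factor $1/2$). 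Taking suprema over $x \in U$ concludes the proof.

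The main obstacle is step two: strong (rather than operator norm) differentiability of $\La$ must be combined with a dominated-convergence argument uniform in the auxiliary parameter $\la$. The resolvent identity is the algebraic mechanism that makes this possible -- it converts the difference $\La(x+he_i)-\La(x)$ into something applied to the \emph{fixed} vector $R_\la(x)\xi$, thereby reducing the question to the strong-derivative behavior of $\La$ at a single vector.
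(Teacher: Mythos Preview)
Your proof is correct and follows precisely the ``more direct'' route that the paper itself flags as an alternative in the remark immediately following the lemma. Both arguments rest on the integral representation
\[
\La(x)^{-1/2} = \frac{1}{\pi}\int_0^\infty \la^{-1/2}(\la + \La(x))^{-1}\,d\la,
\]
but the paper's own proof proceeds in two steps: it first establishes that $\La^{-1/2}$ lies in the Banach algebra $C^1_b(U,\sL(H))$ by invoking the holomorphic functional calculus (the spectrum of $\La$ in $C^1_b(U,\sL(H))$ is contained in $(0,\infty)$, and $z \mapsto z^{-1/2}$ is holomorphic on the right half-plane), and only then uses the integral formula---interpreted as an absolutely convergent Bochner integral in the Banach space $C^1_b(U,H)$---together with boundedness of $d : C^1_b(U,H) \to \Ga_b(U,H \ot T^*\C M)$ to extract the explicit bound on the derivative.

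Your approach collapses these two steps into one: you differentiate under the integral sign directly, using the resolvent identity and dominated convergence to justify the interchange. This avoids any reference to Banach-algebra functional calculus, at the cost of having to handle the strong (rather than norm) differentiability of $\La$ carefully---which you do, by observing that the resolvent identity reduces the difference quotient to the strong-derivative behaviour of $\La$ at the fixed vector $R_\la(x)\xi$. The payoff is a more self-contained argument; the paper's version is shorter if one is willing to import the functional calculus machinery. Your final Beta-integral computation even yields a sharper constant (an extra factor $1/2$).

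One small point you leave implicit: continuity in $x$ of the resulting partial derivatives (needed for the paper's definition of strong differentiability) follows from the same dominated-convergence setup, using operator-norm continuity of $R_\la(\cdot)$ and strong continuity of $d\La$. This is routine and not a gap.
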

\begin{proof}

Let $\de := \sup_{x \in U}\|\La^{-1}(x)\|_\infty < \infty$. Notice that the spectrum $\T{Sp}_{\sL(H)}\big(\La(x)\big)$ is contained in $[\de^{-1},\infty)$ for all $x \in U$. In particular, the norm estimate
\begin{equation}\label{eq:imabunI}
\sup_{x \in U}\|(\la - \La(x))^{-1}\|_{\infty} \leq |\la - \de^{-1}|^{-1}
\end{equation}
holds for all $\la \in \cc\sem (0,\infty)$ by the continuous functional calculus. See for example \cite[Theorem 1.1.7]{Ped:CA}.

Let now $\la \in \cc\sem (0,\infty)$ be fixed. Notice that the map $(\la - \La)^{-1} : U \to \sL(H)$ is $*$-strongly differentiable with
\[
d(\la - \La)^{-1} = \big( (\la - \La)^{-1} \ot 1 \big)(d\La)(\la - \La)^{-1}
\]
As a consequence,
\begin{equation}\label{eq:imabunII}
\sup_{x \in U}\|d(\la - \La)^{-1}(x)\|_\infty
\leq |\la - \de^{-1}|^{-2} \cd \sup_{x \in U}\|d\La(x)\|_\infty.
\end{equation}

This shows that $(\la - \La)^{-1} \in C_b^1(U,\sL(H))$ for all $\la \in \cc\sem (0,\infty)$. Thus $\T{Sp}_{C_b^1(U,\sL(H))}(\La) \su (0,\infty)$. Since the function $z \mapsto z^{-1/2}$ is holomorphic on the half plane $\G{Re}(z) > 0$ and Banach algebras admit a holomorphic functional calculus it follows that $\La^{-1/2} \in C_b^1(U,\sL(H))$. See \cite[Theorem 10.27]{Rud:FA}.

Let $\C A := C_b^1(U,H)$ denote the Banach space of bounded continuously differentiable Hilbert space valued functions with the norm 
\[
s \mapsto \sup_{x \in U}\big( \|s(x)\|_H^2 + \|(ds)(x)\|_{H \ot T_x^*\C M}^2 \big)^{1/2}.
\]
Let $\C B := \Ga_b(U,H \ot T^*\C M)$ denote the Banach space of Hilbert space valued bounded continuous $1$-forms with the norm $\om \mapsto \sup_{x \in U}\|\om(x)\|_{H \ot T_x^*\C M}$. The de Rham differential then yields a bounded operator $d : \C A \to \C B$.

To obtain the estimate on the strong derivative $d(\La^{-1/2})$, let $\xi \in H$ be fixed, and note that
\[
\La^{-1/2}(\xi) = \frac{1}{\pi}\int_0^\infty \la^{-1/2}(\la + \La)^{-1}(\xi) \, d\la,
\]
where the integral converges absolutely in $\C A$. Indeed, this follows from 
the estimates
\[
\begin{split}
\frac{1}{\pi} \int_0^\infty \la^{-1/2} \sup_{x \in U}\|(\la + \La)^{-1}(\xi)(x)\|_\infty\, d\la 
& \leq \|\xi\|_H \cd \frac{1}{\pi} \int_0^\infty \la^{-1/2} (\la + \de^{-1})^{-1}\, d\la \\
& = \de^{1/2} \cd \|\xi\|_H
\end{split}
\]
and
\[
\begin{split}
& \frac{1}{\pi}\int_0^\infty \la^{-1/2}\sup_{x \in U}\|d(\la + \La)^{-1}(\xi)(x)\|_\infty \, d\la \\
& \q \leq
\frac{1}{\pi} \cd \sup_{x \in U}\|(d\La)(x)\|_\infty \cd \|\xi\|_H \cd 
\int_0^\infty \la^{-1/2} (\la + \de^{-1})^{-2} \, d\la \\
& \q \leq \sup_{x \in U}\|(d\La)(x)\|_\infty \cd \|\xi\|_H \cd \de^{3/2},
\end{split}
\]
which rely on \eqref{eq:imabunI} and \eqref{eq:imabunII}.

These considerations imply the desired bound on the strong derivative $d(\La^{-1/2})$.

%

\end{proof}

\begin{remark}
It is possible to give a more sophisticated proof of the first part of the above result using spectral invariance as investigated by Blackadard and Cuntz in the context of differentiable seminorms, see \cite[Proposition 3.12]{BlCu:DNS}.

The above result can also be proved by more direct methods (i.e. without reference to spectra and functional calculus). The proof then relies entirely on the integral formula $\La(x)^{-1/2} = \frac{1}{\pi}\int_0^\infty \la^{-1/2}(\la + \La(x))^{-1} \, d\la$ for the square root.
\end{remark}

Suppose from now on that $\C M$ has bounded geometry. Let $s > 0$ be a constant which satisfies the conditions in Definition \ref{d:bougeo}.

Let $P : C_0^1(\C M)^\infty \to C_0^1(\C M)^\infty$ be a completely bounded projection. By Proposition \ref{p:commanI} and the discussion in the beginning of this section $P$ corresponds precisely to a strongly differentiable map $\sP : \C M \to \sL(H)$ with $\sP(x)$ an orthogonal projection for each $x \in \C M$ and with
\[
\T{sup}_{x \in \C M}\|(d\sP)(x)\|_\infty < \infty.
\]

Choose an $r \in (0,s)$ such that
\[
r < \frac{1}{4 \cd \sup_{x \in \C M}\|g_{\phi_{x,s}}\|_\infty^{1/2} \cd \sup_{x \in \C M}\|d\sP(x)\|_\infty}.
\]
Notice that $\sup_{x \in \C M}\|g_{\phi_{x,s}}\|_\infty < \infty$ by Remark \ref{r:bougeoI}.

The constant $r > 0$ then satisfies the bounded geometry conditions for $\C M$. It will be fixed for the rest of this section. Consequently, for each $x \in \C M$, we use the notation $U_x := U_{x,r}$ and $\phi_x := \phi_{x,r} : U_x \to B_r(0) \su \rr^N$ for the associated normal chart.


\begin{lemma}\label{l:dispro}
Let $x \in \C M$ and let $y,z \in U_x$. Then $\|\sP(y) - \sP(z)\|_\infty < 1/2$.
\end{lemma}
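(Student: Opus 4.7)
The plan is to apply Lemma \ref{l:opecon} to $\sP$ restricted to the normal chart $\phi_x : U_x \to B_r(0) \subset \rr^N$. The hypotheses are satisfied: the image $B_r(0)$ is convex; $\sP$ is strongly differentiable with $\sup_{w \in \C M}\|(d\sP)(w)\|_\infty < \infty$ by assumption; and $\sup_{w \in U_x}\|g_{\phi_x}(w)\|_\infty$ is finite by the bounded geometry hypothesis (Remark \ref{r:bougeoI}). More precisely, since $r < s$ and the normal chart $\phi_{x,r}$ is the restriction of $\phi_{x,s}$ to $U_{x,r}$, one has $\sup_{w \in U_x}\|g_{\phi_x}(w)\|_\infty \leq \sup_{w \in U_{x,s}}\|g_{\phi_{x,s}}(w)\|_\infty$, and taking the supremum over $x \in \C M$ keeps this finite.

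The key geometric input is that for any $y,z \in U_x$, both $\phi_x(y)$ and $\phi_x(z)$ lie in $B_r(0)$, so by the triangle inequality $\|\phi_x(y) - \phi_x(z)\|_{\rr^N} < 2r$. Plugging into Lemma \ref{l:opecon} yields
\[
\|\sP(y) - \sP(z)\|_\infty \leq 2r \cdot \sup_{w \in \C M}\|(d\sP)(w)\|_\infty \cdot \sup_{w \in \C M}\|g_{\phi_{w,s}}(w)\|_\infty^{1/2}.
\]

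The final step is simply to invoke the defining inequality for $r$, namely
\[
r < \frac{1}{4 \cdot \sup_{w \in \C M}\|g_{\phi_{w,s}}\|_\infty^{1/2} \cdot \sup_{w \in \C M}\|(d\sP)(w)\|_\infty},
\]
which was introduced precisely so that the right-hand side of the above estimate is strictly bounded by $1/2$. There is essentially no obstacle here; the entire lemma is a bookkeeping consequence of Lemma \ref{l:opecon} combined with the calibrated choice of the radius $r$ made just before the statement. The only subtle point worth checking is the identification of $g_{\phi_{x,r}}$ with the restriction of $g_{\phi_{x,s}}$, which holds because both normal charts arise from the same exponential map and the same orthonormal basis of $(T_x\C M)_\rr$.
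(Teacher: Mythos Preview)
Your proof is correct and follows essentially the same route as the paper: apply Lemma~\ref{l:opecon} on the normal chart $U_x$, bound $\|\phi_x(y)-\phi_x(z)\|_{\rr^N}$ by $2r$, pass from $g_{\phi_{x,r}}$ to $g_{\phi_{x,s}}$ via restriction, and invoke the calibrated choice of $r$. One small notational slip: in your displayed estimate you write $\sup_{w\in\C M}\|g_{\phi_{w,s}}(w)\|_\infty^{1/2}$, which literally evaluates the metric only at the centre of each chart; you mean the supremum norm $\sup_{w\in\C M}\|g_{\phi_{w,s}}\|_\infty^{1/2}$, as you correctly wrote in the defining inequality for $r$.
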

\begin{proof}
It follows by Lemma \ref{l:opecon} that
\[
\begin{split}
\|\sP(y) - \sP(z)\|_\infty & \leq \|\phi_x(y)-\phi_x(z)\|_{\rr^N} \cd \sup_{w \in U_x}\|(d\sP)(w)\|_\infty \cd \sup_{w \in U_x}\|g_{\phi_x}(w)\|_\infty^{1/2} \\
& < 2r \cd \sup_{w \in U_x}\|(d\sP)(w)\|_\infty \cd \|g_{\phi_{x,s}}\|_\infty^{1/2}
< 1/2.
\end{split}
\]
This proves the lemma in question.
\end{proof}


\begin{lemma}\label{l:unidif}
Let $x \in \C M$. Then the operator 
\[
W_x(y) := \sP(y)(\sP(x)\sP(y)\sP(x))^{-1/2} : \sP(x) H \to \sP(y) H
\]
is well-defined and unitary for all $y \in U_x$. Furthermore, the map $\sI \ci W_x : U_x \to \sL\big(\sP(x)H,H\big)$ is $*$-strongly differentiable with
\[
\|d (\sI \cd W_x)(y)\|_\infty \, , \, \|d(\sI \cd W_{x})^* (y)\|_\infty \leq 
3 \cd \sqrt{2}
\cd \sup_{z \in U_x}\|(d\sP)(z)\|_\infty
\]
for each $y \in U_x$. Here $\sI(y) : \sP(y)H \to H$ denotes the inclusion.
\end{lemma}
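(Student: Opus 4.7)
The plan is to handle the three claims (well-definedness, unitarity, and the derivative bound) in sequence, with the polar decomposition of $\sP(y)\sP(x)$ organizing everything.

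First I would verify well-definedness. For $y \in U_x$, Lemma \ref{l:dispro} yields $\|\sP(x) - \sP(y)\|_\infty < 1/2$, so the operator $\La(y) := \sP(x)\sP(y)\sP(x)$, viewed on the Hilbert space $\sP(x)H$, equals $1_{\sP(x)H}$ minus $\sP(x)(\sP(x)-\sP(y))\sP(x)|_{\sP(x)H}$, which has norm $< 1/2$. Hence $\La(y)$ is positive and invertible on $\sP(x)H$ with spectrum contained in $[1/2,1]$, giving $\|\La(y)^{-1/2}\|_\infty \leq \sqrt 2$ and $\|\La(y)^{-3/2}\|_\infty \leq 2\sqrt 2$. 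This makes $W_x(y) := \sP(y) \La(y)^{-1/2}$ a well-defined bounded operator $\sP(x)H \to H$.

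For unitarity, the cleanest route is via polar decomposition of $T(y) := \sP(y)\sP(x) : \sP(x)H \to \sP(y)H$. Since $T(y)^*T(y) = \La(y)$ has spectrum in $[1/2,1]$, $T(y)$ is bounded below by $1/\sqrt 2$, so it has closed range. Its cokernel in $\sP(y)H$ is trivial: any $\eta \in \sP(y)H$ with $\sP(x)\eta = 0$ satisfies $\eta = (\sP(y)-\sP(x))\eta$, forcing $\|\eta\| < \|\eta\|/2$, hence $\eta = 0$. Thus $T(y)$ is a Banach space isomorphism from $\sP(x)H$ onto $\sP(y)H$, and the unitary part of its polar decomposition is exactly $T(y)\La(y)^{-1/2} = \sP(y) \La(y)^{-1/2} = W_x(y)$. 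This also gives $W_x(y)^* = \La(y)^{-1/2}\sP(y) : H \to \sP(x)H$.

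For the differentiability and bounds, I view $\La$ as a map $U_x \to \sL(\sP(x)H)$. It is strongly differentiable (even $*$-strongly, as $\La(y)$ is self-adjoint) with $d\La(y) = (\sP(x) \ot 1)(d\sP)(y)\sP(x)$, so $\sup_{y \in U_x}\|d\La(y)\|_\infty \leq \sup_{z \in U_x}\|(d\sP)(z)\|_\infty$. Applying Lemma \ref{l:difsqr} with $\sup_{y \in U_x}\|\La^{-1}(y)\|_\infty \leq 2$ yields that $\La^{-1/2}$ is strongly differentiable with
\[
\sup_{y \in U_x}\|d(\La^{-1/2})(y)\|_\infty \leq 2\sqrt 2 \cd \sup_{z \in U_x}\|(d\sP)(z)\|_\infty.
\]
Then the product rule applied to $\sI \ci W_x = \sP(\cd)\La^{-1/2}(\cd)$ gives
\[
d(\sI \ci W_x)(y) = (d\sP)(y)\La^{-1/2}(y) + (\sP(y)\ot 1) d(\La^{-1/2})(y),
\]
whose norm is at most $\sqrt 2 \cd\|d\sP(y)\|_\infty + 2\sqrt 2 \cd \sup_{z \in U_x}\|d\sP(z)\|_\infty \leq 3\sqrt 2 \cd \sup_{z \in U_x}\|d\sP(z)\|_\infty$. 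The same argument applied to $(\sI \ci W_x)^*(y) = \La^{-1/2}(y)\sP(y)$ yields the matching bound for the adjoint map, establishing $*$-strong differentiability.

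The step I expect to require the most care is the surjectivity of $T(y)$ onto $\sP(y)H$, since in infinite dimensions the isometry property $W_x(y)^*W_x(y) = 1_{\sP(x)H}$ does not by itself force $W_x(y)$ to be unitary; the strict inequality $\|\sP(x)-\sP(y)\|_\infty < 1/2$ from Lemma \ref{l:dispro} is what rules out a nontrivial cokernel. Everything else is essentially a packaging of the product rule together with Lemma \ref{l:difsqr}, whose hypotheses are exactly tailored to this situation.
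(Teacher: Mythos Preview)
Your proof is correct and follows essentially the same approach as the paper: both use Lemma \ref{l:dispro} to obtain invertibility of $\La(y)$ on $\sP(x)H$, then Lemma \ref{l:difsqr} for the differentiability and bound of $\La^{-1/2}$, and finally the product rule for $\sI\ci W_x=\sP\,\La^{-1/2}$ and its adjoint $\La^{-1/2}\sP(x)\sP$. The only notable difference is that the paper simply asserts ``it is not hard to see that $W(y)$ is unitary,'' whereas you spell this out via the polar decomposition of $\sP(y)\sP(x)$ and the cokernel argument---this extra care is appropriate, since, as you correctly flag, the isometry property alone would not suffice in infinite dimensions.
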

\begin{proof}
To ease the notation, let $U := U_x$, let $P:= \sP(x)$, and let $W := W_x$. 

Let $y \in U$. By Lemma \ref{l:dispro}, $\|P - P\sP(y)P\|_\infty \leq \|P - \sP(y)\|_\infty < 1/2$. This implies that $P\sP(y)P : PH \to PH$ is invertible. Since it is also positive this shows that $W(y) : PH \to \sP(y)H$ is well-defined. It is not hard to see that $W(y)$ is unitary.

Let $\La := P \sP(\cd) P : U \to \sL(PH,PH)$. Notice then that
\[
\| \La^{-1}(y)\|_\infty \leq 
\sum_{i=0}^\infty \| P - \sP(y)\|^i_\infty < \sum_{i=0}^\infty (1/2)^i = 2,
\]
for all $y \in U$. Furthermore, $\sup_{y \in U}\|d\La(y)\|_\infty \leq \sup_{y \in U}\|(d\sP)(y)\|_\infty < \infty$. It thus follows from Lemma \ref{l:difsqr} that $\La^{-1/2}$ is strongly differentiable. Since $\sP : H \to H$ is strongly differentiable by assumption this implies the strong differentiablity of $\sI \cd W : U \to \sL(PH,H)$. 

The upper bound on the strong derivative of $\sI \cd W$ is also a consequence of Lemma \ref{l:difsqr}. Indeed,
\[
\begin{split}
\|d(\sI \cd W)(y)\|_\infty & \leq \|(d\sP)(y)\|_\infty \cd \|(\La^{-1/2})(y)\|_\infty + \|d(\La^{-1/2})(y)\|_\infty \\
& \leq \sqrt{2} \cd \sup_{z \in U}\|(d\sP)(z)\|_\infty  + \sup_{z \in U} \|(d\sP)(z)\|_\infty \cd \sup_{z \in U}\|\La^{-3/2}(z)\|_\infty \\
& \leq \sqrt{2} \cd 3 \cd \sup_{z \in U}\|(d\sP)(z)\|_\infty
\end{split}
\]
for all $y \in U$.

To see that the adjoint $(\sI W)^* : y \mapsto (\sI(y) W(y))^* \in \sL(H, PH)$ is strongly differentiable note that $\big( \sI(y) W(y) \big)^* = \La^{-1/2}(y) P \sP(y)$. The above proof now gives the desired result, including the upper bound on the strong derivative.
\end{proof}

Define the Hilbert space $\T{Im}(P)_x := \T{Im}(\sP(x))$ for each $x \in \C M$. Consider the disjoint union $\T{Im}(P) := \coprod_{x \in \C M} \T{Im}(P)_x$ of Hilbert spaces together with the projection $\pi : \T{Im}(P) \to \C M$, $\pi(\xi) = x \lrar \xi \in \T{Im}(P)_x$. The collection $\{U_x\}_{x \in \C M}$ is an open cover of $\C M$. For each $x \in \C M$ define the map
\[
\psi_x : \coprod_{y \in U_x} \sP(y)H \to U_x \ti \sP(x)H \q \psi_{x}(\xi) := \big(y,W_{x}(y)^*(\xi)\big) \, , \, \xi \in \sP(y)H.
\]

Define a basis $\sB$ for a topology on $\T{Im}(P)$ by
\[
V \in \sB \lrar \big( \pi(V) \su U_x \T{ for some }x\in \C M \, \, \T{ and }\, \, \psi_x(V) \su U_x \ti \sP(x)H \T{ is open } \big).
\]
The topology generated by this basis makes $\T{Im}(P)$ into a topological Hausdorff space such that the surjective map $\pi : \T{Im}(P) \to \C M$ is continuous, and such that the maps $\psi_x : \T{Im}(P)|_{U_x} \to U_x \ti \sP(x) H$ are homeomorphisms. Remark that all the fiber Hilbert spaces $\sP(x)H$ are unitarily isomorphic since $\C M$ is connected by assumption.
%

\begin{thm}\label{t:imabun}
The topological Hausdorff space $\T{Im}(P)$, the continuous surjective map $\pi : \T{Im}(P) \to \C M$, and the local trivializations $\psi_x : \T{Im}(P)|_{U_x} \to U_x \ti \sP(x)H$ gives $\T{Im}(P)$ the structure of a Hilbert bundle of bounded geometry.
\end{thm}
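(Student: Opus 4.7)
The plan is to verify conditions (1) and (2) of Definition \ref{d:hilbou} for the transition maps $\tau_{x,y}(z) = W_x(z)^*\ci W_y(z)$ on $U_x \cap U_y$. Most of the topological content in the statement is presented as given, so what remains is the unitarity and strong differentiability of $\tau_{x,y}$ together with a uniform bound on its strong derivative. A mild preliminary issue is that each fibre in the proposed trivialisation is $\sP(x)H$ rather than a fixed model Hilbert space; since $\sP$ is norm continuous by Lemma \ref{l:opecon} and $\C M$ is connected, all $\sP(x)H$ share a common Hilbert-space dimension, so one may pick a separable $H_0$ of that dimension together with a unitary $V_x : \sP(x)H \to H_0$ for each $x$. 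These $V_x$ are constants with respect to the transition variable $z$ and so affect neither differentiability nor the derivative norm bounds below.

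For unitarity, each $W_x(z)$ and $W_y(z)$ is unitary by Lemma \ref{l:unidif}, hence so is $W_x(z)^* W_y(z)$.

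For strong differentiability, Lemma \ref{l:unidif} provides that $\sI \cd W_x$ and $\sI \cd W_y$ are $*$-strongly differentiable on $U_x$ and $U_y$ respectively. Using $\sP(z) W_y(z) = W_y(z)$, one rewrites $\tau_{x,y} = (\sI \cd W_x)^* \ci (\sI \cd W_y)$ on $U_x \cap U_y$, and the Leibniz rule for a product of $*$-strongly differentiable maps yields strong differentiability and the formula
\[
(d\tau_{x,y})(z) = d\bigl((\sI W_x)^*\bigr)(z)\cd (\sI W_y)(z) + \bigl((\sI W_x)^*(z) \ot 1\bigr)\cd d(\sI W_y)(z).
\]
The factors $(\sI W_x)^*(z)$ and $(\sI W_y)(z)$ have operator norm at most one (an isometric inclusion composed with a unitary), while Lemma \ref{l:unidif} gives the uniform bound $\|d((\sI W_x)^*)(z)\|_\infty,\ \|d(\sI W_y)(z)\|_\infty \leq 3\sqrt{2}\cd \sup_{w \in \C M}\|(d\sP)(w)\|_\infty$. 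Therefore
\[
\sup_{x,y \in \C M}\|d\tau_{x,y}\|_\infty \,\leq\, 6\sqrt{2}\cd \sup_{w \in \C M}\|(d\sP)(w)\|_\infty,
\]
which is finite since $\sP$ corresponds to a completely bounded projection and, crucially, is independent of $x,y \in \C M$, as required.

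The genuine analytic difficulty has already been absorbed into Lemmas \ref{l:difsqr} and \ref{l:unidif}, together with the choice of radius $r$ small enough that Lemma \ref{l:dispro} applies on each $U_x$. The present step is then a short Leibniz computation. The one point needing care, and the closest thing to an obstacle, is the observation that the estimate produced by Lemma \ref{l:unidif} depends only on $\sup_{z \in U_x}\|(d\sP)(z)\|_\infty$, so it is automatically dominated by the global supremum $\sup_{w \in \C M}\|(d\sP)(w)\|_\infty$ and therefore produces a bound on $\|d\tau_{x,y}\|_\infty$ that is uniform in the base points $x,y$, which is exactly what Definition \ref{d:hilbou} demands.
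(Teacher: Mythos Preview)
Your proof is correct and follows essentially the same route as the paper's own argument: rewrite $\tau_{x,y} = (\sI W_x)^*(\sI W_y)$, appeal to Lemma \ref{l:unidif} for unitarity and the $*$-strong differentiability of each factor, and combine via the Leibniz rule to obtain the uniform bound $6\sqrt{2}\sup_{w}\|(d\sP)(w)\|_\infty$. Your extra paragraph on replacing the varying model fibres $\sP(x)H$ by a fixed $H_0$ via constant unitaries $V_x$ makes explicit something the paper only alludes to in the remark preceding the theorem.
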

\begin{proof}
Let $x,y \in \C M$ and consider the transition map
\[
\tau_{x,y} : U_x \cap U_y \to \sL(\sP(y)H,\sP(x)H) \q \tau_{x,y}(z) := W_x(z)^* W_y(z).
\]
It follows from Lemma \ref{l:unidif} that $\tau_{x,y}(z)$ is unitary for all $z \in U_x \cap U_y$.

Notice now that $\tau_{x,y} = (\sI \ci W_x)^* (\sI \ci W_y)$, where as above $\sI(z) : \sP(z)H \to H$ denotes the inclusion for all $z \in \C M$. Lemma \ref{l:unidif} then yields that $\tau_{x,y}$ is strongly differentiable with
\[
\|d(\tau_{x,y})(z)\|_\infty \leq \|d(\sI W_x)^*(z)\|_\infty + \|d(\sI W_y)(z)\|_\infty
\leq 6 \cd \sqrt{2} \cd \sup_{w \in \C M}\|d(\sP)(w)\|_\infty.
\]
for all $z \in U_x \cap U_y$. This implies that $\T{sup}_{x,y \in \C M }\|d(\tau_{x,y})\|_\infty < \infty$ since $\T{sup}_{w \in \C M} \|d(\sP)(w)\| < \infty$ by assumption.
\end{proof}

\section{The Serre-Swan theorem}\label{s:serswa}
Throughout this section $\C M$ will be a manifold of bounded geometry and of dimension $N \in \nn$.

Recall from Theorem \ref{t:difstab} that the $C^1_0$-sections $\Ga_0^1(\sH)$ form an operator $*$-module over $C^1_0(\C M)$ for any Hilbert bundle $\sH \to \C M$ of bounded geometry.

\begin{prop}\label{p:secfun}
The assignment $\sH \mapsto \Ga_0^1(\sH)$, $\al \mapsto \Ga_0^1(\al)$ is a covariant functor from the category of Hilbert bundles of bounded geometry over $\C M$ to the category of operator $*$-modules over $C^1_0(\C M)$. The adjoints are related by the formula, $\Ga_0^1(\al^*) = \Ga_0^1(\al)^*$.
\end{prop}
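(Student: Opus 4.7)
The easy components are quickly disposed of: for $s \in \Ga_0^1(\sH)$, the section $\Ga_0^1(\al)(s) = \al \ci s$ lies in $\Ga_0^1(\sG)$ by the computation following Definition \ref{d:HilBunII}, and the assignment $\al \mapsto \Ga_0^1(\al)$ is visibly $C^1_0(\C M)$-linear, respects composition, and sends the identity to the identity. The fiberwise definition $(\al^*)_z = (\al_z)^*$ from Remark \ref{r:her} yields, for $s \in \Ga_0^1(\sH)$ and $t \in \Ga_0^1(\sG)$,
\[
\inn{\Ga_0^1(\al)(s),t}(z) = \inn{\al(s(z)),t(z)}_{\sG_z} = \inn{s(z),\al^*(t(z))}_{\sH_z} = \inn{s,\Ga_0^1(\al^*)(t)}(z),
\]
so $\Ga_0^1(\al^*)$ is an algebraic adjoint of $\Ga_0^1(\al)$ with respect to the hermitian forms. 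The substantive content of the proposition is therefore that both $\Ga_0^1(\al)$ and $\Ga_0^1(\al^*)$ are completely bounded.

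To establish complete boundedness of $\Ga_0^1(\al)$, I would fix bounded partitions of unity $\{\chi_i\}$, $\{\chi_j'\}$ subordinate to the normal covers $\{U_x\}$ of $\sH$ and $\{V_y\}$ of $\sG$, together with base points $\{x_i\}$, $\{y_j\}$, and use the stabilization maps $\Phi_\sH, \Psi_\sH$ and $\Phi_\sG, \Psi_\sG$ from Section \ref{s:difstab}. By Remark \ref{r:openor}, $\Phi_\sH$ is by construction a complete isometry of $\Ga_0^1(\sH)$ onto the direct summand $P_\sH C^1_0(\C M,H^\infty)$, and similarly for $\sG$; hence $\Ga_0^1(\al)$ is completely bounded if and only if the inflated composition
\[
Q := \Phi_\sG \ci \Ga_0^1(\al) \ci \Psi_\sH : C^1_0(\C M,H^\infty) \to C^1_0(\C M,G^\infty)
\]
is completely bounded. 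A direct computation unwinding the definitions of $\Phi_\sG$ and $\Psi_\sH$ gives, for $t = \sum_i e_i t_i$,
\[
Q(t)(z) = \sum_{i,j} e_j \cd \al_{y_j,x_i}(z)(t_i(z)) \cd \sqrt{\chi_i(z)\chi_j'(z)},
\]
a locally finite sum by finite multiplicity. Thus $Q$ is pointwise multiplication by the $*$-strongly differentiable operator-valued map $\Theta : \C M \to \sL(H^\infty,G^\infty)$ whose $(j,i)$-block is $\al_{y_j,x_i}(z)\sqrt{\chi_i(z)\chi_j'(z)}$, and Proposition \ref{p:commanI} (applicable since $\C M$ is complete by Remark \ref{r:comman}) reduces the entire question to verifying $\sup_{z \in \C M}\|\pi(\Theta)(z)\|_\infty < \infty$.

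This uniform bound is the main technical obstacle. At each $z$, finite multiplicity of $\{\T{supp}(\chi_i)\}$ and $\{\T{supp}(\chi_j')\}$ restricts the nonzero $(j,i)$-blocks of $\Theta(z)$ to a uniformly boundedly-sized set, so combined with $\sup_{x,y}\|\al_{y,x}\|_\infty < \infty$ the estimate $\sup_z\|\Theta(z)\|_\infty < \infty$ is immediate. The bound on $\|(d\Theta)(z)\|_\infty$ follows by differentiating each block via the Leibniz rule, invoking $\sup_{x,y}\|d\al_{y,x}\|_\infty < \infty$ together with $\sup_i\|d\sqrt{\chi_i}\|_\infty < \infty$ and $\sup_j\|d\sqrt{\chi_j'}\|_\infty < \infty$, and applying an $\ell^2$-estimate in the spirit of \eqref{eq:difstabIII}. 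Since $\al^*$ is itself a morphism of Hilbert bundles of bounded geometry by Remark \ref{r:her}, the same argument applied to $\al^*$ produces complete boundedness of $\Ga_0^1(\al^*)$; combined with the fiberwise identity above this shows $\Ga_0^1(\al^*)$ is the adjoint of $\Ga_0^1(\al)$ in the category of operator $*$-modules, completing the proof.
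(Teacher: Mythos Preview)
Your proposal is correct and follows essentially the same route as the paper. Both arguments identify $\Ga_0^1(\al)$, via the stabilization isometries, with pointwise multiplication by an infinite block matrix $\Theta : \C M \to \sL(H^\infty,G^\infty)$ whose $(j,i)$-block is $\al_{y_j,x_i}\sqrt{\chi_i\chi_j'}$, and then invoke Proposition~\ref{p:commanI} to reduce complete boundedness to $\|\Theta\|_1 < \infty$; the only cosmetic difference is that the paper packages the bound on $\Theta$ and $d\Theta$ into a single estimate on $\|\pi(\Theta)(x)\|_\infty$ by exploiting the multiplicativity $\pi(\al_{y_j,x_i}\sqrt{\chi_i\chi_j'}) = \pi(\al_{y_j,x_i})\pi(\sqrt{\chi_i\chi_j'})$, whereas you treat the two pieces separately via Leibniz and an $\ell^2$-estimate.
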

\begin{proof}
Let $\al : \sH \to \sG$ be a morphism of Hilbert bundles of bounded geometry. Let us show that $\Ga_0^1(\al) : \Ga_0^1(\sH) \to \Ga_0^1(\sG)$ is completely bounded and has a completely bounded adjoint.

Suppose that the operator $*$-module structure on $\Ga_0^1(\sH)$ is determined by a bounded partition of unity $\{\chi_j\}$ with $\T{supp}(\chi_j) \su U_{x_j,r}$, and local trivializations $\psi_{x_j} : \sH|_{U_{x_j,r}} \to U_{x_j,r} \ti H$. Likewise, suppose that a bounded partition of unity $\{\si_i\}$ with $\T{supp}(\si_i) \su U_{y_i,s}$, and local trivializations $\rho_{y_i} : \sG|_{U_{y_i,s}} \to U_{y_i,s} \ti G$ determine the operator $*$-module structure on $\Ga_0^1(\sG)$. Here $H$ and $G$ are separable Hilbert spaces and $r,s>0$ are constants which satisfy the bounded geometry conditions for $\C M$.

Define the map $A : \C M \to \sL(H^\infty,G^\infty)$ as follows,
\[
A(x) : e_j \cd \xi \mapsto \sum_{i=1}^\infty e_i \cd \al_{y_i,x_j}(x)(\xi) \cd \sqrt{\si_i \cd \chi_j}(x),
\]
where $e_j \cd \xi \in H^\infty$ denotes the vector in the infinite direct sum of Hilbert spaces with $\xi \in H$ in position $j$ and zeroes elsewhere. Recall that $\al_{y_i,x_j} := \rho_{y_i} \ci \al \ci \psi_{x_j}^{-1} : U_{y_i,s} \cap U_{x_j,r} \to \sL(H,G)$ is notation for the transition maps associated with the morphism $\al : \sH \to \sG$.

The map $A$ then restricts to $\Ga_0^1(\al) : \Ga_0^1(\sH) \to \Ga_0^1(\sG)$ under the isomorphism of Proposition \ref{p:commanI}. Notice that we have tacitly identified $\Ga_0^1(\sH)$ with a direct summand in $C^1_0(\C M,H^\infty)$ and $\Ga_0^1(\sG)$ with a direct summand in $C^1_0(\C M,G^\infty)$ using the maps corresponding to our choice of partitions of unities and local trivializations. See Section \ref{s:difstab}.
%

It is therefore enough to show that $A : \C M \to \sL(H^\infty,G^\infty)$ is $*$-strongly differentiable with $\|A\|_1 = \sup_{x \in \C M}\|\pi(A)(x)\|_\infty < \infty$. See Section \ref{ss:comman} for the definition of the norm $\|\cd\|_1 : C^1_b\big(\C M,\sL(H^\infty,G^\infty)\big) \to [0,\infty)$.

Notice that $A(x)$ is an infinite block matrix of elements in $\sL(H,G)$ for each $x \in \C M$. The block in position $(i,j)$ is the bounded operator $\al_{y_i,x_j}(x) \cd \sqrt{\si_i\cd \chi_j}(x)$. Since $\al_{y_i,x_j} \cd \sqrt{\si_i \cd \chi_j} : \C M \to \sL(H,G)$ is $*$-strongly differentiable for each $i,j \in \nn$ it follows by the local finiteness of the partitions of unity $\{\si_i\}$ and $\{\chi_j\}$ that $A : \C M \to \sL(H^\infty,G^\infty)$ is $*$-strongly differentiable.

Since $\al : \sH \to \sG$ is a morphism of Hilbert bundles of bounded geometry, there exists a constant $C_\al > 0$ such that
\[
\|\pi(\al_{y_i,x_j})(x)\|_\infty \leq \|\al_{y_i,x_j}(x)\|_\infty + \|d(\al_{y_i,x_j})(x)\|_\infty
\leq C_\al
\]
for all $i,j \in \nn$ and all $x \in U_{y_i,s} \cap U_{x_j,r}$. See the discussion in the beginning of Section \ref{s:imabun}.

Furthermore, since the partitions of unity $\{\si_i\}$ and $\{\chi_j\}$ are bounded, there exists a constant $C_{\si,\chi} > 0$ such that
\[
\sup_{i,j \in \nn}\|\pi(\sqrt{\si_i \cd \chi_j})(x)\|_\infty \leq \sup_{i,j \in \nn}\big( \|\sqrt{\si_i \cd \chi_j}(x)\|_\infty + \|d(\sqrt{\si_i \cd \chi_j})(x)\|_\infty\big)
\leq C_{\si,\chi},
\]
for all $x \in \C M$, where $\sqrt{\si_i \cd \chi_j}$ is perceived as an element in $C^1_b(\C M,\sL(H))$ for all $i,j \in \nn$.

These two computations imply that
\[
\sup_{i,j \in \nn}\|\pi(\al_{y_i,x_j} \cd \sqrt{\si_i \cd \chi_j})(x)\|_\infty
\leq \|\pi(\al_{y_i,x_j})(x)\|_\infty \cd \|\pi(\sqrt{\si_i \cd \chi_j})(x)\|_\infty
\leq C_\al \cd C_{\si,\chi}
\]
for all $x \in \C M$, where the identity in \eqref{eq:difsqrI} has been applied.

Let $K \in \nn$ and $L \in \nn$ denote the multiplicites of the partitions of unity $\{\chi_j\}$ and $\{\si_i\}$ respectively. 
%

Let $x \in \C M$ and let $\xi = \sum_{j=1}^\infty e_j \xi_j \in H^\infty \op (H^\infty \ot T_x^*\C M$). Compute then as follows
\[
\begin{split}
& \| \pi(A)(x)(\xi) \|^2_{G^{\infty} \op (G^\infty \ot T_x^*\C M)} 
= \sum_{i=1}^\infty \|\sum_{j=1}^\infty \pi(\al_{y_i,x_j} \cd \sqrt{\si_i \cd \chi_j})(x)(\xi_j)\|_{G \op (G \ot T_x^*\C M)}^2 \\
& \q \leq L \cd K^2 \cd \sup_{i,j \in \nn} \|\pi(\al_{y_i,x_j} \cd \sqrt{\si_i \cd \chi_j})(\xi_j)\|_{G \op (G \ot T_x^*\C M)}^2 \\
& \q \leq L \cd (K \cd C_\al \cd C_{\si,\chi})^2 \cd \sup_{j \in \nn}\|\xi_j\|_{H \op (H \ot T_x^*\C M)}^2 \\
& \q \leq L \cd (K \cd C_\al \cd C_{\si,\chi})^2 \cd \|\xi\|_{H^\infty \op (H^\infty \ot T_x^*\C M)}^2.
\end{split}
\]
This shows that $\|A\|_1 = \sup_{x \in \C M}\|\pi(A)(x)\|_\infty < \infty$ as desired.
%
%
\end{proof}

\begin{remark}
As noted in Remark \ref{r:openor}, the operator $*$-module structure of $\Ga_0^1(\sH)$ depends on the choice of a bounded partition of unity and local trivializations. It follows however from the above proposition that this dependency is only up to a canonical unitary isomorphism. Indeed, the identity map $1_{\sH} : \sH \to \sH$ is a morphism of Hilbert bundles of bounded geometry, see Remark \ref{r:idemor}.
\end{remark}

\begin{lemma}\label{l:serswaI}
The functor $\Ga_0^1 : \G{Hilb}_{\C M} \to \G{Op^*Mod}_{C^1_0(\C M)}$ is essentially surjective.
\end{lemma}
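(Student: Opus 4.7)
Given an operator $*$-module $X$, by Definition \ref{d:opemod} and Proposition \ref{p:starie} we may assume $X = P\cd C^1_0(\C M,H)$ for some completely bounded orthogonal projection $P$ on $C^1_0(\C M)^\infty \cong C^1_0(\C M,H)$, with $H$ separable of infinite dimension. Bounded geometry forces $\C M$ to be complete (Remark \ref{r:comman}), so Proposition \ref{p:commanI} produces a $*$-strongly differentiable projection-valued map $\sP : \C M \to \sL(H)$ corresponding to $P$, with $\sup_{x \in \C M}\|d\sP(x)\|_\infty < \infty$. Theorem \ref{t:imabun} then yields a Hilbert bundle of bounded geometry $\pi : \T{Im}(P) \to \C M$ with fibers $\T{Im}(P)_y = \sP(y)H$ and local trivializations $\psi_x : \T{Im}(P)|_{U_x} \to U_x \ti \sP(x)H$ coming from the unitaries $W_x^*$. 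My candidate isomorphism is
\[
\Xi : P \cd C^1_0(\C M,H) \to \Ga_0^1(\T{Im}(P)) \q \Xi(t)(y) := t(y) \in \sP(y)H,
\]
with inverse $\Xi^{-1}(s)(y) := \sI(y)\big(s(y)\big) \in H$.

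First I would verify well-definedness. On each $U_x$ the local formulas $\psi_x \ci \Xi(t) = (\sI W_x)^* \cd t$ and $\Xi^{-1}(s) = \sI W_x \cd (\psi_x \ci s)$ apply. By Lemma \ref{l:unidif} the maps $\sI W_x$ and $(\sI W_x)^*$ are $*$-strongly differentiable with pointwise operator norm at most $1$ and strong derivatives bounded by $3\sqrt{2} \cd \sup_{w}\|d\sP(w)\|_\infty$, \emph{uniformly in} $x$. The product rule then transfers the $C^1_0$-property in both directions. That $\Xi$ preserves the hermitian forms is immediate: the inner product on $\T{Im}(P)_y$ is the restriction of $\inn{\cd,\cd}_H$, so $\inn{\Xi(t),\Xi(t')}(y) = \inn{t(y),t'(y)}_H = \inn{t,t'}(y)$.

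The main obstacle is showing that $\Xi$ and $\Xi^{-1}$ are completely bounded as morphisms of operator $*$-modules. Fix a bounded partition of unity $\{\chi_i\}$ with centers $\{x_i\}$ (Lemmas \ref{l:finmul} and \ref{l:paruni}), and let $\Phi : \Ga_0^1(\T{Im}(P)) \to C^1_0(\C M,H^\infty)$ be the embedding from the proof of Theorem \ref{t:difstab}. The composite $\Phi \ci \Xi$ coincides with pointwise multiplication by
\[
A(y) : H \to H^\infty \q A(y)(\xi) := \sum_i e_i \cd W_{x_i}(y)^* \sP(y)(\xi) \cd \sqrt{\chi_i}(y),
\]
each summand extended by zero off $U_{x_i}$. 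Using Lemma \ref{l:unidif}, the finite multiplicity of $\{\T{supp}(\chi_i)\}$, and the bound $\sup_i\|d\sqrt{\chi_i}\|_\infty < \infty$, estimates parallel to those in the proof of Theorem \ref{t:difstab} show $A \in C^1_b(\C M,\sL(H,H^\infty))$. By Proposition \ref{p:commanI} the associated multiplication operator is completely bounded on all of $C^1_0(\C M,H)$, hence so is $\Xi$. The argument for $\Xi^{-1}$ is symmetric, by composing with the inverse $\Psi$ from Theorem \ref{t:difstab} and using the multiplier $B(y)(\sum_i e_i \xi_i) := \sum_i \sI(y)W_{x_i}(y)(\xi_i)\sqrt{\chi_i}(y)$, which lies in $C^1_b(\C M,\sL(H^\infty,H))$ by the same reasoning.

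Inner-product preservation then forces $\Xi^* = \Xi^{-1}$, so $\Xi$ is a unitary isomorphism of operator $*$-modules, witnessing $\Ga_0^1(\T{Im}(P)) \cong X$ and establishing essential surjectivity.
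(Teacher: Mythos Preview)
Your proposal is correct and follows essentially the same route as the paper's own proof: both reduce to $X = \sP C^1_0(\C M,H)$, invoke Theorem \ref{t:imabun} for the image bundle, define the comparison map by $t \mapsto (y \mapsto t(y))$, and establish complete boundedness by exhibiting the very same multiplier $A(y)(\xi) = \sum_i e_i \cd W_{x_i}(y)^*\sP(y)(\xi)\sqrt{\chi_i}(y)$ in $C^1_b(\C M,\sL(H,H^\infty))$ and appealing to Proposition \ref{p:commanI}. The paper dispatches the inverse slightly more economically by just observing that the adjoint is $\al^*(s)(x) = s(x)$, whereas you build a second multiplier $B$; note that in your formula for $B$ the operator $W_{x_i}(y)$ has domain $\sP(x_i)H$, so strictly speaking one should write $\sI(y)W_{x_i}(y)\sP(x_i)$ there, but this is a notational slip rather than a gap.
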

\begin{proof}
Recall that $\Ga_0^1$ is essentially surjective when each operator $*$-module $X$ over $C^1_0(\C M)$ is isomorphic to an operator $*$-module of the form $\Ga_0^1(\sH)$.

Let $X$ be an operator $*$-module over $C^1_0(\C M)$. Without loss of generality we may assume that $X = \sP C^1_0(\C M,H)$, where $H$ is a separable infinite dimensional Hilbert space and $\sP : \C M \to \sL(H)$ is a strongly differentiable projection valued map with $\sup_{x \in \C M}\|(d\sP)(x)\|_\infty < \infty$. See Proposition \ref{p:starie} and Proposition \ref{p:commanI}.

By Theorem \ref{t:imabun} the associated image bundle $\T{Im}(P) \to \C M$ is a Hilbert bundle of bounded geometry. Recall in this respect that the fiber is $\T{Im}(P)_x = \sP(x)H$ for each $x \in \C M$ and that the local trivializations are given by $\psi_x = W_x^* : \T{Im}(P)|_{U_x} \to U_x \ti \sP(x)H$ for all $x \in \C M$. See Lemma \ref{l:unidif}. Here $\{U_x\}_{x \in \C M}$ is an open cover of normal coordinate neighborhoods associated with some suitable $r > 0$ which satisfies the bounded geometry condition for $\C M$. 

By definition, the operator $*$-module structure on $\Ga_0^1\big(\T{Im}(P)\big)$ is determined by a bounded partition of unity $\{\chi_i\}$ and local trivializations $\psi_{x_i} = W_{x_i}^* : \T{Im}(P)|_{U_{x_i}} \to U_{x_i} \ti \sP(x_i)H$. See Definition \ref{d:boupar} for the precise conditions on $\{\chi_i\}$.

Define the map $A : \C M \to \sL(H,H^\infty)$ by
\[
A(x) : \xi \mapsto \sum_{i=1}^\infty e_i \cd W_{x_i}^*(x) \sP(x)(\xi) \cd \sqrt{\chi_i}(x),
\]
for all $x \in \C M$ and all $\xi \in H$. We can think of $A(x)$ as an infinite column of bounded operators from $H$ to $H$. Since each of the entries is $*$-strongly differentiable by Lemma \ref{l:unidif} and the partition of unity $\{\chi_i\}$ is locally finite it follows that $A$ is $*$-strongly differentiable. Let us prove that $\|A\|_1 < \infty$. See Section \ref{ss:comman} for the definition of the norm $\|\cd\|_1$.

By Lemma \ref{l:unidif} there exists a constant $C_\tau > 0$ such that
\begin{equation}\label{eq:esssurI}
\|\pi(W_{x_i}^* \cd \sP)(x)\|_\infty \leq C_\tau,
\end{equation}
for all $i \in \nn$ and $x \in U_{x_i}$. Likewise, since the partition of unity $\{\chi_i\}$ is bounded there exists a constant $C_\chi > 0$ such that
\begin{equation}\label{eq:esssurII}
\sup_{i \in \nn}\|\pi(\sqrt{\chi_i})(x)\|_\infty \leq C_{\chi}
\end{equation}
for all $x \in \C M$, where $\sqrt{\chi_i}$ is perceived as an element in $C^1_b(\C M,\sL(H))$ for all $i \in \nn$.

Let $K \in \nn$ denote the multiplicity of $\{\chi_i\}$. Let $x \in \C M$, let $\xi \in H \op (H \ot T_x^*\C M)$, and note that
\[
\begin{split}
& \|\pi(A)(x)(\xi)\|^2_{H^\infty \op (H^\infty \ot T_x^*\C M)}
= \sum_{i=1}^\infty\|\pi(W_{x_i}^* \sP \cd \sqrt{\chi_i})(x)(\xi)\|_{H \op (H \ot T_x^*\C M)}^2 \\
& \q \leq K \cd \sup_{i \in \nn} \|\pi(W_{x_i}^* \sP \cd \sqrt{\chi_i})(x)(\xi)\|_{H \op (H \ot T_x^*\C M)}^2
\leq K \cd C_\tau^2 \cd C_\chi^2 \cd \|\xi\|_{H \op (H \ot T_x^*\C M)}^2,
\end{split}
\]
where we have applied the identity
\[
\pi(W_{x_i}^* \sP \cd \sqrt{\chi_i})(x) =
\pi(W_{x_i}^* \sP)(x) \ci \pi(\sqrt{\chi_i})(x), 
\]
as well as the estimates in \eqref{eq:esssurI} and \eqref{eq:esssurII}. This shows that $\|A\|_1 < \infty$.

It follows from these considerations and Proposition \ref{p:commanI} that $A$ induces a morphism $C^1_0(\C M,H) \to C^1_0(\C M,H^\infty)$ of hermitian operator modules. It is not hard to see that this map restricts to the morphism
\[
\al : \sP C^1_0(\C M,H) \to \Ga_0^1(\T{Im}(P)) \q \al(\xi)(x) = \xi(x)
\]
of hermitian operator modules. Here we have identified $\Ga_0^1(\T{Im}(P))$ with a direct summand in $C^1_0(\C M,H^\infty)$ as in Section \ref{s:difstab}.

This establishes that $X$ is (unitarily) isomorphic to $\Ga_0^1(\T{Im}(P))$ in the category of operator $*$-modules. Indeed, the adjoint of $\al$ is given by
\[
\al^* : \Ga_0^1(\T{Im}(P)) \to \sP C^1_0(\C M,H) \q \al^*(s)(x) = s(x).
\]

The functor $\Ga_0^1 : \G{Hilb}_{\C M} \to \G{Op^*Mod}_{C^1_0(\C M)}$ is thus essentially surjective.
\end{proof}

\begin{lemma}\label{l:serswaII}
The functor $\Ga_0^1 : \G{Hilb}_{\C M} \to \G{Op^*Mod}_{C^1_0(\C M)}$ is fully faithful.
\end{lemma}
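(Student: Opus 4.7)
The statement asserts that $\Ga_0^1$ is faithful and full. I would treat these in turn, with fullness carrying the technical weight.

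For \emph{faithfulness}, it suffices to show that $C_0^1$-sections separate points in each fiber of $\sH$. Given $z \in \C M$ and $\xi \in \sH_z$, pick $x \in \C M$ with $z \in U_x$ and a smooth compactly supported bump $\chi : U_x \to [0,1]$ with $\chi(z) = 1$. Writing $\psi_x(\xi) = (z,\eta)$ with $\eta \in H$, set $s(w) := \psi_x^{-1}(w, \chi(w) \eta)$ and extend by zero outside $U_x$; then $s \in \Ga_0^1(\sH)$ with $s(z) = \xi$. Hence $\Ga_0^1(\al) = \Ga_0^1(\be)$ forces $\al_z(\xi) = (\al \ci s)(z) = (\be \ci s)(z) = \be_z(\xi)$ for every $z$ and every $\xi \in \sH_z$, so $\al = \be$.

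For \emph{fullness}, given a morphism $\be : \Ga_0^1(\sH) \to \Ga_0^1(\sG)$ of operator $*$-modules, I would reconstruct $\al : \sH \to \sG$ fiberwise by setting $\al_z(s(z)) := \be(s)(z)$. Well-definedness is the crucial point and uses the adjoint $\be^*$: if $s(z) = 0$ then for every $t \in \Ga_0^1(\sG)$,
\[
\inn{\be(s)(z), t(z)}_{\sG_z} = \inn{\be(s),t}(z) = \inn{s, \be^*(t)}(z) = \inn{s(z), \be^*(t)(z)}_{\sH_z} = 0,
\]
and since $\{t(z) : t \in \Ga_0^1(\sG)\} = \sG_z$ by the faithfulness construction applied to $\sG$, this forces $\be(s)(z) = 0$. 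Each $\al_z$ is then a well-defined linear map, and an analogous adjointness calculation identifies the fiber map arising from $\be^*$ with $\al_z^*$. Strong differentiability of the transition $\al_{y,x}$ at $z_0 \in V_y \cap U_x$ is obtained by the same localization: for $\xi \in H$ and a bump $\chi$ on $U_x$ equal to $1$ near $z_0$, the section $s(w) = \psi_x^{-1}(w, \chi(w)\xi)$ satisfies $(\rho_y \ci \be(s))(w) = \al_{y,x}(w)(\xi)$ near $z_0$, which is $C^1$ since $\be(s) \in \Ga_0^1(\sG)$; $*$-strong differentiability comes by applying the same argument to $\be^*$.

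The main obstacle is the uniform bound $\sup_{x,y}\big(\|d\al_{y,x}\|_\infty + \|\al_{y,x}\|_\infty\big) < \infty$. Unitarity of the trivializations on each fiber gives $\|\al_{y,x}(z)\|_\infty = \|\al_z\|_\infty$, and this operator-norm part is bounded uniformly via $\|\be\|_{\T{cb}}$ applied to localizing sections of controlled $\|\cd\|_1$-norm. The derivative bound is more delicate. My plan is to reduce to the diagonal case $x = y = z_0$ by invoking the chain rule together with the uniform bounds on $d\tau$ from Definition \ref{d:hilbou} for both $\sH$ and $\sG$, and then, at a fixed $z_0$, to choose the localizing bump $\chi$ by pulling back a \emph{fixed} smooth profile on $B_r(0) \su \rr^N$ through the normal chart $\phi_{z_0,r}$; the bounded geometry of $\C M$ (Definition \ref{d:bougeo}) provides uniform control on $\|\chi\|_\infty$ and $\|d\chi\|_\infty$ in $z_0$. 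Combining this with $\|\be\|_{\T{cb}}$ and the Leibniz expansion of $(\rho_{z_0} \ci \be(s_\xi))$ at $z_0$, where the leading term is $\chi \cd \al_{z_0,z_0}(\cd)(\xi)$, yields the required bound on $\|d\al_{z_0,z_0}(z_0)(\xi)\|$. The identity $\Ga_0^1(\al) = \be$ is immediate from the fiberwise definition, closing the proof.
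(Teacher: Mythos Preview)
Your architecture is right and close to the paper's: define $\al$ fiberwise from $\be$, establish well-definedness via the adjoint, verify $*$-strong differentiability by localization, reduce to diagonal transitions via the uniform $d\tau$ bounds, and then seek a uniform $C^1$ estimate from $\|\be\|_{\T{cb}}$ applied to localizing sections of controlled norm. Faithfulness and the local differentiability step are fine.

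There is, however, a genuine gap in the final estimate. You treat $\|\be\|_{\T{cb}}$ as if it directly controlled $\|d(\rho_{z_0}\ci\be(s_\xi))(z_0)\|$, but the operator $*$-module norm on $\Ga_0^1(\sG)$ is \emph{not} a naive $C^1$-sup norm. By Remark~\ref{r:openor} it is defined through the embedding $\Phi$ into $C^1_0(\C M,G^\infty)$ using a \emph{fixed} bounded partition of unity $\{\si_i\}$ and trivializations $\{\rho_{y_i}\}$:
\[
\|t\|_1 = \sup_{x}\Big(\|t(x)\|^2 + \sum_i \|d\big((\rho_{y_i}\ci t)\sqrt{\si_i}\big)(x)\|^2\Big)^{1/2}.
\]
This bounds derivatives only after multiplication by the $\sqrt{\si_i}$ and only in the trivializations $\rho_{y_i}$; it does not a priori control $d(\rho_{z_0}\ci t)(z_0)$ for an arbitrary $z_0$. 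Passing from one to the other requires exactly the ingredient the paper supplies: by finite multiplicity there is $\de_\chi>0$ so that near every point some $\chi_j\ge\de_\chi$, and then one divides by $\chi_j$ (with the uniform bound on $\pi(\chi_j^{-1})$) to liberate $\al_{x_j,x_j}$ before moving to a general chart via the transition bounds. Your ``combining with $\|\be\|_{\T{cb}}$ and Leibniz'' skips this step. A symmetric issue lurks on the source side: to bound $\|s_\xi\|_{\Ga_0^1(\sH)}$ uniformly in $z_0$ you need not only bounded geometry of $\C M$ (for $\chi$) but also the bundle bound $\sup\|d\tau\|_\infty<\infty$, since $\psi_{x_i}\ci s_\xi = \tau_{x_i,z_0}(\chi\,\xi)$ enters the norm.

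By comparison, the paper sidesteps both difficulties by using the very functions $\sqrt{\chi_j}$ of the defining partition of unity as the bumps. Then $\Phi(\xi_{x_j}) = \sP(e_j\cd\xi)$, giving the clean bound $\|\xi_{x_j}\|_{\Ga_0^1(\sH)}\le \|\sP\|_{\T{cb}}\cd\|\xi\|_H$, and on the target side the quantity $\sqrt{\chi_j}\cd\rho_{x_j}(\al(\xi_{x_j}))$ appears \emph{termwise} in the $\Ga_0^1(\sG)$-norm, so the inequality $\|\chi_j\cd\al_{x_j,x_j}(\xi)\|_1 \le \|\al(\xi_{x_j})\|_{\Ga_0^1(\sG)}$ is immediate. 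Your variant with ad hoc bumps can be made to work, but it needs the same $\de_\chi$-extraction and the bundle transition bounds made explicit; as written, the crucial link between the abstract cb-norm and pointwise $C^1$ control is missing.
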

\begin{proof}
Recall that $\Ga_0^1$ is fully faithful when the map
\[
\Ga_0^1 : \T{Mor}(\sH,\sG) \to \T{Mor}(\Ga_0^1(\sH),\Ga_0^1(\sG))
\]
if bijective whenever $\sH$ and $\sG$ are Hilbert bundles of bounded geometry.

The injectivity part is easy to prove and well-known. 

Thus, let $\sH$ and $\sG$ be two Hilbert bundles of bounded geometry over $\C M$ and let $\al : \Ga_0^1(\sH) \to \Ga_0^1(\sG)$ be a morphism of operator $*$-modules.

Let $\{\al_x\}_{x \in \C M}$ denote the unique set of maps such that $\al_x : \sH_x \to \sG_x$ and $\al(s)(x) = \al_x(s(x))$ for all $x \in \C M$ and all sections $s \in \Ga_0^1(\sH)$. It is enough to show that each transition map $\al_{y,x} : U_y \cap U_x \to \sL(H,G)$ lies in $C^1_b(U_y \cap U_x, \sL(H,G))$ and that $\sup_{y,x \in \C M} \|\al_{y,x}\|_1 < \infty$. See Definition \ref{d:hilbou} and the discussion in the beginning of Section \ref{s:imabun}.

We may suppose that the operator $*$-module structures on $\Ga_0^1(\sH)$ and $\Ga_0^1(\sG)$ are given by a bounded partition of unity $\{\chi_i\}$ and local trivializations $\psi_{x_j} : \sH|_{U_{x_j}} \to U_{x_j} \ti H$ and $\rho_{x_i} : \sG|_{U_{x_i}} \to U_{x_i} \ti G$. See Section \ref{s:difstab}. Remark that the open covers of $\C M$ by normal coordinate neighborhoods and the bounded partitions of unity may be chosen to agree by Proposition \ref{p:secfun}.

The projection $\sP : \C M \to \sL(H^\infty)$ associated with $\Ga_0^1(\sH)$ is thus given by
\[
\sP(x) : e_j \cd \xi \mapsto \sum_{i=1}^\infty e_i \cd \tau_{x_i,x_j}(x)(\xi) \cd \sqrt{\chi_i \cd \chi_j}
\]
for all $x \in \C M$, $\xi \in H$, and $j \in \nn$. See the proof of Theorem \ref{t:difstab}.

Likewise, the module homomorphism $\Phi : \Ga_0^1(\sH) \to \sP C^1_0(\C M,H^\infty)$ is given by
\[
\Phi(s)(x) = \sum_{i = 1}^\infty e_i \cd \psi_{x_i}(s \cd \sqrt{\chi_i}),
\]
for all $x \in \C M$, and all $s \in \Ga_0^1(\sH)$.

For each $j \in \nn$ and each $\xi \in H$, let $\xi_{x_j} \in \Ga_0^1(\sH)$ be defined by $\xi_{x_j}(x) := \psi_{x_j}^{-1}(x,\xi \cd \sqrt{\chi_j}(x))$ for all $x \in \C M$. 

Let $\xi \in H$ and let $j \in \nn$. Notice then that
\begin{equation}\label{eq:serswaI}
\begin{split}
\|\xi_{x_j}\|_{\Ga_0^1(\sH)}
& = \|\Phi(\xi_{x_j})\|_{C^1_0(\C M,H^\infty)}
= \|\sum_{i=1}^\infty e_i \cd \tau_{x_i,x_j}(\xi) \cd \sqrt{\chi_i \cd \chi_j}\|_{C^1_0(\C M,H^\infty)} \\
& = \|\sP(e_j \cd \xi)\|_{C^1_0(\C M,H^\infty)}
\leq \|\sP\|_{\T{cb}} \cd \|\xi\|_H.
\end{split}
\end{equation}

To continue, remark that
\begin{equation}\label{eq:serswaII}
\chi_j(x) \cd \al_{x_j,x_j}(\xi)(x) = \sqrt{\chi_j}(x) \cd \rho_{x_j}\big( \al(\xi_{x_j})(x) \big)
\end{equation}
for all $x \in \C M$. This shows that $\chi_j \cd \al_{x_j,x_j} : \C M \to \sL(H,G)$ is strongly differentiable. Since a similar argument applies to the adjoint we conclude that $\chi_j \cd \al_{x_j,x_j}$ is $*$-strongly differentiable.

Now, apply \eqref{eq:serswaI} and \eqref{eq:serswaII} to compute as follows,
\begin{equation}\label{eq:serswaIII}
\begin{split}
\|\chi_j \cd \al_{x_j,x_j}(\xi)\|_1
& = \|\sqrt{\chi_j} \cd \rho_{x_j}\big( \al(\xi_{x_j}) \big)\|_1
\leq \|\al(\xi_{x_j})\|_{\Ga_0^1(\sG)} \leq \|\al\|_{\T{cb}} \cd \|\xi_{x_j}\|_{\Ga_0^1(\sH)} \\
& \leq \|\al\|_{\T{cb}} \cd \|\sP\|_{\T{cb}} \cd \|\xi\|_H.
\end{split}
\end{equation}

Since the partition of unity $\{\chi_j\}$ has finite multiplicity, there exists a constant $\de_{\chi} > 0$ such that for every $x \in \C M$ there exists a neighborhood $W_x$ and a $j \in \nn$ with $\chi_j(y) \geq \de_{\chi}$ for all $y \in W_x$.

Let $x \in \C M$ and choose a neighborhood $W_x$ and a $j \in \nn$ as above.
Let $C_\chi := \sup_{j \in \nn}\|d\chi_j\|_\infty$. Notice that
\begin{equation}\label{eq:serswaIV}
\|\pi(\chi_j^{-1})(y)\|_\infty
= \big\| 
\ma{cc}{
\chi_j^{-1}(y) & 0 \\
-(\chi_j^{-2} d\chi_j)(y) & \chi_j^{-1}(y)
}
\big\|_\infty
\leq \de_{\chi}^{-1} + \de_{\chi}^{-2} \cd C_\chi
\end{equation}
for all $y \in W_x$, where $\chi_j^{-1}$ is perceived as an element in $C_b^1(W_x,\sL(G))$. To ease the notation, put $\de_{\chi}^{-1} \cd (1 + \de_{\chi}^{-1} \cd C_\chi) := C_{\de,\chi}$.

Combining \eqref{eq:serswaIII} and \eqref{eq:serswaIV} we obtain that
\[
\begin{split}
& \|\pi(\al_{x_j,x_j})(y)\ma{c}{\xi \\ 0}\|_{G \op (G \ot T_y^*\C M)}
\leq
\|\pi(\chi_j \cd \al_{x_j,x_j})(y)\ma{c}{\xi \\ 0}\|_{G \op (G \ot T_y^*\C M)}
\cd C_{\de,\chi} \\
& \q \leq \|\chi_j \cd \al_{x_j,x_j}(\xi)\|_1 \cd C_{\de,\chi}
\leq \|\al\|_{\T{cb}} \cd \|\sP\|_{\T{cb}} \cd \|\xi\|_H \cd C_{\de,\chi},
\end{split}
\]
for all $y \in W_x$. This shows that
\begin{equation}\label{eq:serswaV}
\|\pi(\al_{x_j,x_j})(y)\| \leq 2 \cd \|\al\|_{\T{cb}} \cd \|\sP\|_{\T{cb}} \cd C_{\de,\chi},
\end{equation}
for all $y \in W_x$. 

Let $C_\tau := \sup_{x,y \in \C M}\|\tau_{x,y}\|_1$ and $C_\si := \sup_{x,y \in \C M}\|\si_{x,y}\|_1$, where $\tau_{x,y} : U_x \cap U_y \to \sL(H)$ and $\si_{x,y} : U_x \cap U_y \to \sL(H)$ denote the transition maps of the bundles $\sH$ and $\sG$ respectively.

Let $x,y \in \C M$ and let $z \in U_x \cap U_y$. Choose a neighborhood $W_z \su U_x \cap U_y$ of $z$ and a $j \in \nn$ such that $\chi_j^{-1}(w) \geq \de_\chi$ for all $w \in W_z$. The estimate in \eqref{eq:serswaV} now implies that
\[
\begin{split}
\|\pi(\al_{y,x})(w)\|_\infty & \leq 
\|\pi(\si_{y,x_j})(w)\|_\infty \cd \|\pi(\al_{x_j,x_j})(w)\|_\infty \cd
\|\pi(\tau_{x_j,x})(w)\|_\infty \\
& \leq C_\si \cd 2 \cd \|\al\|_{\T{cb}} \cd \|\sP\|_{\T{cb}} \cd C_{\de,\chi} \cd C_\tau
\end{split}
\]
for all $w \in W_z$. See also \eqref{eq:difsqrI}. This proves the proposition.
\end{proof}

\begin{thm}\label{t:serswa}
The covariant functor $\Ga_0^1 : \G{Hilb}_{\C M} \to \G{Op^*Mod}_{C^1_0(\C M)}$ is an equivalence of categories.
\end{thm}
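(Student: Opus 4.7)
The plan is to conclude the theorem as an immediate consequence of the two preceding lemmas together with the standard categorical criterion for equivalence. Recall that a covariant functor $F : \C C \to \C D$ is an equivalence of categories if and only if $F$ is fully faithful and essentially surjective; moreover a quasi-inverse can then be constructed by choosing, for each object $Y \in \C D$, an object $F^{-1}(Y) \in \C C$ together with an isomorphism $\eta_Y : F(F^{-1}(Y)) \to Y$, and defining $F^{-1}$ on morphisms via the inverse of the bijection $F : \T{Mor}(F^{-1}(Y),F^{-1}(Y')) \to \T{Mor}(Y,Y')$ conjugated by the $\eta$'s.

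First I would invoke Lemma \ref{l:serswaI} to see that $\Ga_0^1$ is essentially surjective: every operator $*$-module $X$ over $C^1_0(\C M)$, realized without loss of generality as $\sP C^1_0(\C M,H)$ via Proposition \ref{p:starie} and Proposition \ref{p:commanI}, is unitarily isomorphic to $\Ga_0^1\big(\T{Im}(P)\big)$, where $\T{Im}(P) \to \C M$ is the image Hilbert bundle of bounded geometry from Theorem \ref{t:imabun}. Next I would invoke Lemma \ref{l:serswaII}, which establishes that for any pair $\sH,\sG$ of Hilbert bundles of bounded geometry the map
\[
\Ga_0^1 : \T{Mor}(\sH,\sG) \to \T{Mor}\big( \Ga_0^1(\sH), \Ga_0^1(\sG) \big)
\]
is bijective, i.e.\ $\Ga_0^1$ is fully faithful. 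Combining these two facts with the categorical criterion completes the proof.

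There is no real obstacle here: all the substantial analytic work has been carried out in Section \ref{s:difstab} (stabilization producing the projection $\sP$), Section \ref{s:imabun} (construction of the image bundle with bounded transition maps via the unitaries $W_x$ of Lemma \ref{l:unidif}), and the two lemmas above. The only thing worth remarking on in the proof is the compatibility with the $*$-structure: Proposition \ref{p:secfun} already records that $\Ga_0^1(\al^*) = \Ga_0^1(\al)^*$, so the equivalence automatically respects adjoints and in particular preserves the unitary isomorphisms on both sides.
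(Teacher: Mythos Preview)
Your proposal is correct and follows exactly the same approach as the paper: invoke the standard criterion that a functor is an equivalence if and only if it is fully faithful and essentially surjective, and then cite Lemma~\ref{l:serswaI} and Lemma~\ref{l:serswaII}. The additional remarks you make about constructing a quasi-inverse and about $*$-compatibility are fine but not needed; the paper's proof is the same two-line reduction to the preceding lemmas.
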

\begin{proof}
It is enough to show that $\Ga_0^1$ is fully faithful and essentially surjective, see \cite[Chapter IV, Theorem 1]{Ma:CWM}. But this was already proved in Lemma \ref{l:serswaI} and Lemma \ref{l:serswaII}.
\end{proof}

\bibliographystyle{amsalpha-lmp}

\providecommand{\bysame}{\leavevmode\hbox to3em{\hrulefill}\thinspace}
\providecommand{\MR}{\relax\ifhmode\unskip\space\fi MR }
\providecommand{\MRhref}[2]{%
  \href{http://www.ams.org/mathscinet-getitem?mr=#1}{#2}
}
\providecommand{\href}[2]{#2}


\begin{thebibliography}{\textsc{V{\'a}GB93}}

\bibitem[\textsc{Bla98}]{Bla:KOA}
\textsc{B.~Blackadar}, \emph{{$K$}-theory for operator algebras}, second ed.,
  Mathematical Sciences Research Institute Publications, vol.~5, Cambridge
  University Press, Cambridge, 1998. \MR{1656031 (99g:46104)}

\bibitem[\textsc{BlCu91}]{BlCu:DNS}
\textsc{B.~Blackadar} and \textsc{J.~Cuntz}, \emph{Differential {B}anach
  algebra norms and smooth subalgebras of {$C\sp *$}-algebras}, J. Operator
  Theory \textbf{26} (1991), no.~2, 255--282. \MR{1225517 (94f:46094)}

\bibitem[\textsc{Ble95}]{Ble:COA}
\textsc{D.~P. Blecher}, \emph{A completely bounded characterization of operator
  algebras}, Math. Ann. \textbf{303} (1995), no.~2, 227--239. \MR{1348798
  (96k:46098)}

\bibitem[\textsc{Ble96}]{Ble:GHM}
\bysame, \emph{A generalization of {H}ilbert modules}, J. Funct. Anal.
  \textbf{136} (1996), no.~2, 365--421. \MR{1380659 (97g:46071)}

\bibitem[\textsc{Ble97}]{Ble:AHM}
\bysame, \emph{A new approach to {H}ilbert {$C\sp *$}-modules}, Math. Ann.
  \textbf{307} (1997), no.~2, 253--290. \MR{1428873 (98d:46063)}

\bibitem[\textsc{BlLM04}]{BlMe:OMO}
\textsc{D.~P. Blecher} and \textsc{C.~Le~Merdy}, \emph{Operator algebras and
  their modules---an operator space approach}, London Mathematical Society
  Monographs. New Series, vol.~30, The Clarendon Press Oxford University Press,
  Oxford, 2004, Oxford Science Publications. \MR{2111973 (2006a:46070)}

\bibitem[\textsc{BMP00}]{BlMuPa:CMP}
\textsc{D.~P. Blecher}, \textsc{P.~S. Muhly}, and \textsc{V.~I. Paulsen},
  \emph{Categories of operator modules ({M}orita equivalence and projective
  modules)}, Mem. Amer. Math. Soc. \textbf{143} (2000), no.~681, viii+94.
  \MR{1645699 (2000j:46132)}

\bibitem[\textsc{BRS90}]{BlRuSi:COA}
\textsc{D.~P. Blecher}, \textsc{Z.-J. Ruan}, and \textsc{A.~M. Sinclair},
  \emph{A characterization of operator algebras}, J. Funct. Anal. \textbf{89}
  (1990), no.~1, 188--201. \MR{1040962 (91b:47098)}

\bibitem[\textsc{CES87}]{CES:CMC}
\textsc{E.~Christensen}, \textsc{E.~G. Effros}, and \textsc{A.~Sinclair},
  \emph{Completely bounded multilinear maps and {$C\sp \ast$}-algebraic
  cohomology}, Invent. Math. \textbf{90} (1987), no.~2, 279--296. \MR{910202
  (89k:46084)}

\bibitem[\textsc{CGT82}]{CGT:FKL}
\textsc{J.~Cheeger}, \textsc{M.~Gromov}, and \textsc{M.~Taylor}, \emph{Finite
  propagation speed, kernel estimates for functions of the {L}aplace operator,
  and the geometry of complete {R}iemannian manifolds}, J. Differential Geom.
  \textbf{17} (1982), no.~1, 15--53. \MR{658471 (84b:58109)}

\bibitem[\textsc{Con94}]{Con:NCG}
\textsc{A.~Connes}, \emph{Noncommutative geometry}, Academic Press Inc., San
  Diego, CA, 1994. \MR{1303779 (95j:46063)}

\bibitem[\textsc{DiDo63}]{DiDo:CCH}
\textsc{J.~Dixmier} and \textsc{A.~Douady}, \emph{Champs continus d'espaces
  hilbertiens et de {$C\sp{\ast} $}-alg\`ebres}, Bull. Soc. Math. France
  \textbf{91} (1963), 227--284. \MR{0163182 (29 \#485)}

\bibitem[\textsc{Eic07}]{Eic:GOM}
\textsc{J.~Eichhorn}, \emph{Global analysis on open manifolds}, Nova Science
  Publishers Inc., New York, 2007. \MR{2343536 (2008i:58001)}

\bibitem[\textsc{KaLe}]{KaLe:SFU}
\textsc{J.~Kaad} and \textsc{M.~Lesch}, \emph{Spectral flow and the unbounded
  {K}asparov product}, submitted for publication. \texttt{arXiv:1110.1472
  [math.OA]}

\bibitem[\textsc{Kar78}]{Kar:KT}
\textsc{M.~Karoubi}, \emph{{$K$}-theory}, Springer-Verlag, Berlin, 1978, An
  introduction, Grundlehren der Mathematischen Wissenschaften, Band 226.
  \MR{0488029 (58 \#7605)}

\bibitem[\textsc{Kas80}]{Kas:HSV}
\textsc{G.~G. Kasparov}, \emph{Hilbert {$C\sp{\ast} $}-modules: theorems of
  {S}tinespring and {V}oiculescu}, J. Operator Theory \textbf{4} (1980), no.~1,
  133--150. \MR{587371 (82b:46074)}

\bibitem[\textsc{KoNo96}]{KoNo:FDGI}
\textsc{S.~Kobayashi} and \textsc{K.~Nomizu}, \emph{Foundations of differential
  geometry. {V}ol. {I}}, Wiley Classics Library, John Wiley \& Sons Inc., New
  York, 1996, Reprint of the 1963 original, A Wiley-Interscience Publication.
  \MR{1393940 (97c:53001a)}

\bibitem[\textsc{LaMi89}]{LaMi:SG}
\textsc{H.~B. Lawson, Jr.} and \textsc{M.-L. Michelsohn}, \emph{Spin geometry},
  Princeton Mathematical Series, vol.~38, Princeton University Press,
  Princeton, NJ, 1989. \MR{1031992 (91g:53001)}

\bibitem[\textsc{Lan02}]{Lan:IDM}
\textsc{S.~Lang}, \emph{Introduction to differentiable manifolds}, second ed.,
  Universitext, Springer-Verlag, New York, 2002. \MR{1931083 (2003h:58002)}

\bibitem[\textsc{Les97}]{Les:OCA}
\textsc{M.~Lesch}, \emph{Operators of {F}uchs type, conical singularities, and
  asymptotic methods}, Teubner-Texte zur Mathematik [Teubner Texts in
  Mathematics], vol. 136, B. G. Teubner Verlagsgesellschaft mbH, Stuttgart,
  1997. \MR{1449639 (98d:58174)}

\bibitem[\textsc{Mes}]{Mes:UCN}
\textsc{B.~Mesland}, \emph{Unbounded bivariant {K}-theory and correspondences
  in noncommutative geometry}. \texttt{arXiv:0904.4383v5 [math.KT]}

\bibitem[\textsc{MiPh84}]{MiPh:ETH}
\textsc{J.~A. Mingo} and \textsc{W.~J. Phillips}, \emph{Equivariant triviality
  theorems for {H}ilbert {$C\sp{\ast} $}-modules}, Proc. Amer. Math. Soc.
  \textbf{91} (1984), no.~2, 225--230. \MR{740176 (85f:46111)}

\bibitem[\textsc{ML98}]{Ma:CWM}
\textsc{S.~Mac~Lane}, \emph{Categories for the working mathematician}, second
  ed., Graduate Texts in Mathematics, vol.~5, Springer-Verlag, New York, 1998.
  \MR{1712872 (2001j:18001)}

\bibitem[\textsc{Mor}]{Mor:NST}
\textsc{A.~Morye}, \emph{Note on the {S}erre-{S}wan theorem}.
  \texttt{arXiv:0905.0319 [math.AG]}

\bibitem[\textsc{Pau02}]{Pau:CBO}
\textsc{V.~Paulsen}, \emph{Completely bounded maps and operator algebras},
  Cambridge Studies in Advanced Mathematics, vol.~78, Cambridge University
  Press, Cambridge, 2002. \MR{1976867 (2004c:46118)}

\bibitem[\textsc{Ped79}]{Ped:CA}
\textsc{G.~K. Pedersen}, \emph{{$C\sp{\ast} $}-algebras and their automorphism
  groups}, London Mathematical Society Monographs, vol.~14, Academic Press Inc.
  [Harcourt Brace Jovanovich Publishers], London, 1979. \MR{548006 (81e:46037)}

\bibitem[\textsc{Pis03}]{Pis:IOT}
\textsc{G.~Pisier}, \emph{Introduction to operator space theory}, London
  Mathematical Society Lecture Note Series, vol. 294, Cambridge University
  Press, Cambridge, 2003. \MR{2006539 (2004k:46097)}

\bibitem[\textsc{Roe88}]{Roe:ITO}
\textsc{J.~Roe}, \emph{An index theorem on open manifolds. {I}, {II}}, J.
  Differential Geom. \textbf{27} (1988), no.~1, 87--113, 115--136. \MR{918459
  (89a:58102)}

\bibitem[\textsc{Rua88}]{Rua:SCA}
\textsc{Z.-J. Ruan}, \emph{Subspaces of {$C\sp *$}-algebras}, J. Funct. Anal.
  \textbf{76} (1988), no.~1, 217--230. \MR{923053 (89h:46082)}

\bibitem[\textsc{Rud73}]{Rud:FA}
\textsc{W.~Rudin}, \emph{Functional analysis}, McGraw-Hill Book Co., New York,
  1973, McGraw-Hill Series in Higher Mathematics. \MR{0365062 (51 \#1315)}

\bibitem[\textsc{Sar}]{Sar:SSN}
\textsc{G.~Sardanashvily}, \emph{Remark on the {S}erre-{S}wan theorem for
  non-compact manifolds}. \texttt{arXiv:math-ph/0102016v1}

\bibitem[\textsc{Shu92}]{Shu:SEN}
\textsc{M.~A. Shubin}, \emph{Spectral theory of elliptic operators on
  noncompact manifolds}, Ast\'erisque (1992), no.~207, 5, 35--108, M{\'e}thodes
  semi-classiques, Vol. 1 (Nantes, 1991). \MR{1205177 (94h:58175)}

\bibitem[\textsc{Swa62}]{Swa:VBP}
\textsc{R.~G. Swan}, \emph{Vector bundles and projective modules}, Trans. Amer.
  Math. Soc. \textbf{105} (1962), 264--277. \MR{0143225 (26 \#785)}

\bibitem[\textsc{V{\'a}GB93}]{VaGr:CDS}
\textsc{J.~C. V{\'a}rilly} and \textsc{J.~M. Gracia-Bond{\'{\i}}a},
  \emph{Connes' noncommutative differential geometry and the standard model},
  J. Geom. Phys. \textbf{12} (1993), no.~4, 223--301. \MR{1246492 (94g:46083)}

\bibitem[\textsc{Wol73}]{Wol:ESD}
\textsc{J.~A. Wolf}, \emph{Essential self-adjointness for the {D}irac operator
  and its square}, Indiana Univ. Math. J. \textbf{22} (1972/73), 611--640.
  \MR{0311248 (46 \#10340)}

\end{thebibliography}

\end{document}